\numberwithin{equation}{section}
\theoremstyle{plain}
\newtheorem{Cl}[equation]{Claim}
\newtheorem{Lem}[equation]{Lemma}
\newtheorem{Prop}[equation]{Proposition}
\newtheorem{Thm}[equation]{Theorem}
\theoremstyle{definition}
\newtheorem{Defn}[equation]{Definition}
\newtheorem{Ex}[equation]{Example}
\theoremstyle{remark}
\newtheorem{Rk}[equation]{Remark}
\theoremstyle{plain}
\newtheorem*{Cl*}{Claim}
\newtheorem*{Conj*}{Conjecture}
\newtheorem*{Lem*}{Lemma}
\newtheorem*{Prop*}{Proposition}
\newtheorem*{Q*}{Question}
\newtheorem*{Schol*}{Scholium}
\newtheorem*{SubCl*}{Subclaim}
\newtheorem*{Thm*}{Theorem}
\theoremstyle{definition}
\newtheorem*{Cond*}{Condition}
\newtheorem*{Cstr*}{Construction}
\newtheorem*{Defn*}{Definition}
\newtheorem*{Ex*}{Example}
\newtheorem*{Exs*}{Examples}
\newtheorem*{Md*}{Method}
\newtheorem*{Nt*}{Notation}
\newtheorem*{Pty*}{Property}
\theoremstyle{remark}
\newtheorem*{Rk*}{Remark}
\newtheorem*{Rks*}{Remarks}
\newtheorem*{A-d}{Aside}
\newcommand{\cag}{\begin{equation}\begin{gathered}}
\newcommand{\caag}{\end{gathered}\end{equation}}
\newcommand{\caw}{\begin{equation*}\begin{gathered}}
\newcommand{\caaw}{\end{gathered}\end{equation*}}
\newcommand{\e}{\begin{equation}\begin{aligned}}
\newcommand{\ee}{\end{aligned}\end{equation}}
\newcommand{\ew}{\begin{equation*}\begin{aligned}}
\newcommand{\eew}{\end{aligned}\end{equation*}}
\newcommand{\bcd}{\begin{tikzcd}}
\newcommand{\ecd}{\end{tikzcd}}
\newcommand{\bma}{\begin{matrix}}
\newcommand{\ema}{\end{matrix}}
\newcommand{\bpm}{\begin{pmatrix}}
\newcommand{\epm}{\end{pmatrix}}
\newcommand{\bvm}{\begin{vmatrix}}
\newcommand{\evm}{\end{vmatrix}}
\newcommand{\nts}{\begin{tcolorbox}}
\newcommand{\ntss}{\end{tcolorbox}}
\newcommand{\aref}[1]{Appendix \ref{#1}}
\newcommand{\cref}[1]{Corollary \ref{#1}}
\newcommand{\dref}[1]{Definition \ref{#1}}
\newcommand{\eref}[1]{eqn.\hspace{0.6mm}(\ref{#1})}
\newcommand{\erefs}[1]{eqns.\hspace{0.6mm}(\ref{#1})}
\newcommand{\lref}[1]{Lemma \ref{#1}}
\newcommand{\pref}[1]{Proposition \ref{#1}}
\newcommand{\sref}[1]{\S\ref{#1}}
\newcommand{\srefs}[1]{\S\S\ref{#1}}
\newcommand{\tref}[1]{Theorem \ref{#1}}
\newcommand{\trefs}[1]{Theorems \ref{#1}}
\newcommand{\mss}[1]{\mbox{\scriptsize \(#1\)}}
\newcommand{\mfn}[1]{\mbox{\footnotesize \(#1\)}}
\newcommand{\mns}[1]{\mbox{\normalsize \(#1\)}}
\newcommand{\mla}[1]{\mbox{\large \(#1\)}}
\newcommand{\bb}[1]{\mathbb{#1}}
\newcommand{\cal}[1]{\mathscr{#1}}
\newcommand{\fr}[1]{\mathfrak{#1}}
\newcommand{\mc}[1]{\mathcal{#1}}
\newcommand{\et}{\hspace{5mm}\text{and}\hspace{5mm}}
\newcommand{\hs}[1]{\hspace{#1}}
\newcommand{\vs}[1]{\vspace{#1}}
\DeclareMathSymbol{\Alpha}{\mathalpha}{operators}{"41}
\DeclareMathSymbol{\Beta}{\mathalpha}{operators}{"42}
\DeclareMathSymbol{\Epsilon}{\mathalpha}{operators}{"45}
\DeclareMathSymbol{\Zeta}{\mathalpha}{operators}{"5A}
\DeclareMathSymbol{\Eta}{\mathalpha}{operators}{"48}
\DeclareMathSymbol{\Iota}{\mathalpha}{operators}{"49}
\DeclareMathSymbol{\Kappa}{\mathalpha}{operators}{"4B}
\DeclareMathSymbol{\Mu}{\mathalpha}{operators}{"4D}
\DeclareMathSymbol{\Nu}{\mathalpha}{operators}{"4E}
\DeclareMathSymbol{\Omicron}{\mathalpha}{operators}{"4F}
\DeclareMathSymbol{\Rho}{\mathalpha}{operators}{"50}
\DeclareMathSymbol{\Tau}{\mathalpha}{operators}{"54}
\DeclareMathSymbol{\Chi}{\mathalpha}{operators}{"58}
\DeclareMathSymbol{\omicron}{\mathord}{letters}{"6F}
\newcommand{\al}{\alpha}
\newcommand{\be}{\beta}
\newcommand{\ga}{\gamma}
\newcommand{\de}{\delta}
\newcommand{\ze}{\zeta}
\renewcommand{\th}{\theta}
\newcommand{\vth}{\vartheta}
\newcommand{\io}{\iota}
\newcommand{\la}{\lambda}
\newcommand{\vpi}{\varpi}
\newcommand{\rh}{\rho}
\newcommand{\si}{\sigma}
\newcommand{\ph}{\phi}
\newcommand{\vph}{\upvarphi}
\newcommand{\ch}{\chi}
\newcommand{\ps}{\psi}
\newcommand{\om}{\omega}
\newcommand{\Ga}{\Gamma}
\newcommand{\De}{\Delta}
\newcommand{\Ps}{\Psi}
\newcommand{\Om}{\Omega}
\newcommand{\<}{\langle}
\newcommand{\?}{\rangle}
\newcommand{\bqt}[3]{\left.\raisebox{-1mm}{\(#2\)}\middle\backslash\raisebox{1mm}{\(#1\)}\middle/\raisebox{-1mm}{\(#3\)}\right.}
\newcommand{\Ds}{\bigoplus}
\newcommand{\ds}{\oplus}
\newcommand{\End}{\operatorname{End}}
\newcommand{\Id}{\operatorname{Id}}
\newcommand{\lqt}[2]{\left.\raisebox{-1mm}{\(#2\)}\middle\backslash\raisebox{1mm}{\(#1\)}\right.}
\newcommand{\proj}{\operatorname{proj}}
\newcommand{\rqt}[2]{\left.\raisebox{1mm}{\(#1\)}\middle/\raisebox{-1mm}{\(#2\)}\right.}
\newcommand{\ts}{\otimes}
\newcommand{\Tr}{\operatorname{Tr}}
\newcommand{\x}{\times}
\newcommand{\Hocl}[1]{\overset{\circ}{H^{#1}}_{\kern-1.9mm\cl}}
\newcommand{\lop}{\left\|\kern-1.30mm\left\|}
\newcommand{\op}{\|\kern-1.30mm\|}
\newcommand{\rop}{\right\|\kern-1.30mm\right\|}
\newcommand{\SI}{\operatorname{\cal{I}\kern-1.5pt nd}}
\newcommand{\Spec}{\operatorname{Spec}}
\newcommand{\cc}{\subseteq}
\newcommand{\es}{\emptyset}
\newcommand{\LE}{\Leftrightarrow}
\newcommand{\mt}{\mapsto}
\newcommand{\osr}{\backslash}
\newcommand{\oto}[1]{\xrightarrow{#1}}
\newcommand{\pc}{\subset}
\newcommand{\CCH}{\mathcal{H}_4}
\newcommand{\CH}{\mathcal{H}_3}
\newcommand{\cl}{\mathrm{closed}}
\newcommand{\dd}{\mathrm{d}}
\newcommand{\Diff}{\operatorname{Diff}}
\newcommand{\dR}[1]{H^{#1}_{\operatorname{dR}}}
\newcommand{\emb}{\hookrightarrow}
\newcommand{\hk}{\righthalfcup}
\newcommand{\Hs}{\raisebox{1pt}{\mss{\bigstar}}}
\newcommand{\ls}{\left\lsem}
\newcommand{\M}{\mathrm{M}}
\newcommand{\N}{\mathrm{N}}
\newcommand{\rs}{\right\rsem}
\renewcommand{\ss}[2][{}]{\bigodot{\hspace{-1mm}}^{#2}_{#1}\hspace{0.6mm}}
\newcommand{\T}{\mathrm{T}}
\newcommand{\tl}{{\mns{\sim}}}
\newcommand{\w}{\wedge}
\newcommand{\ww}[2][{}]{\bigwedge{\hspace{-1mm}}^{#2}_{\hspace{1mm}#1}\hspace{0.1mm}}
\newcommand{\0}{\infty}
\newcommand{\1}{\cdot}
\renewcommand{\ge}{\geqslant}
\newcommand{\gl}{\hspace{0.4mm}\raisebox{0.8mm}{\(>\)}\kern-1.8mm\raisebox{-0.8mm}{\(<\)}\hspace{0.4mm}}
\newcommand{\gle}{\hspace{0.4mm}\raisebox{1.2mm}{\(\ge\)}\kern-1.8mm\raisebox{-1.2mm}{\(\le\)}\hspace{0.4mm}}
\renewcommand{\le}{\leqslant}
\newcommand{\pt}{\bullet}
\newcommand{\sgn}{\operatorname{sign}}
\newcommand{\g}{\(\mathrm{G}_2\)}
\newcommand{\Gg}{\mathrm{G}}
\newcommand{\GL}{\operatorname{GL}}
\newcommand{\Norm}{\operatorname{Norm}}
\newcommand{\sdp}{\ltimes}
\newcommand{\sg}{\(\widetilde{\mathrm{G}}_2\)}
\newcommand{\SL}{\operatorname{SL}}
\newcommand{\SO}{\operatorname{SO}}
\newcommand{\Stab}{\operatorname{Stab}}
\newcommand{\SU}{\operatorname{SU}}
\let\LE\iff%Saves original definition of '\iff' before redefining.
\renewcommand{\iff}{if and only if}
\newcommand{\stp}{suffices to prove}
\newcommand{\wrt}{with respect to}
\newcommand{\CGN}{Crowley--Goette--Nordstr\"om}
\newcommand{\ol}[1]{\overline{#1}}
\newcommand{\lt}{\left}
\newcommand{\m}{\middle}
\newcommand{\rt}{\right}
\newcommand{\tld}{\widetilde}
\title{Spectral invariants of Joyce orbifolds}
\author{Laurence H. Mayther}
\begin{document}\fontsize{10pt}{12pt}\selectfont
\begin{abstract}
\footnotesize{This paper introduces two new spectral invariants of torsion-free \g-structures on closed orbifolds and computes their values on all Joyce orbifolds.  These invariants are shown to be more discerning than the \(\ol{\nu}\)-invariant of \CGN\ when applied to Joyce orbifolds, and thus provide candidate tools for distinguishing between Joyce manifolds.  The invariants may be viewed as regularisations of the classical Morse indices of the critical points of the Hitchin functionals on closed and coclosed \g-structures respectively.  In the case of Joyce orbifolds, an interesting link with twisted Epstein \(\ze\)-functions is also observed.}
\end{abstract}
\maketitle

\section{Introduction}

Let \(\M\) be an oriented 7-manifold.  A 3-form \(\ph\) on \(\M\) is termed a \g\ 3-form if, for each \(x \in \M\), there exists an orientation-preserving isomorphism \(\al:\T_x\M \to \bb{R}^7\) identifying \(\ph|_x\) with the 3-form:
\ew
\vph_0 = \th^{123} + \th^{145} + \th^{167} + \th^{246} - \th^{257} - \th^{347} - \th^{356} \in \ww{3}\lt(\bb{R}^7\rt)^*,
\eew
where \(\lt(\th^1,...,\th^7\rt)\) denotes the standard basis of \(\lt(\bb{R}^7\rt)^*\) and multi-index notation \(\th^{ij...k} = \th^i \w \th^j \w ... \w \th^k\) is used throughout this paper.  Since the stabiliser of \(\vph_0\) in \(\GL_+(7;\bb{R})\) is \g, a \g\ 3-form \(\ph\) on \(\M\) is equivalent to a \g-structure on \(\M\), i.e.\ a principal \g-subbundle of the frame bundle of \(\M\).  Since \(\Gg_2 \pc \SO(7)\), such a \(\ph\) induces a metric \(g_\ph\) and volume form \(vol_\ph\) on \(\M\).  In particular, \(\ph\) defines a 4-form \(\ps = \Hs_\ph\ph\), where \(\Hs_\ph\) denotes the Hodge star induced by \(\ph\); call any 4-form which arises in this way a \g\ 4-form.  Say that \(\ph\) and \(\ps\) (or, more generally, the corresponding \g-structure on \(\M\)) are torsion-free if both \(\dd\ph = 0\) and \(\dd\ps = 0\); in this case, the holonomy group of the metric \(g_\ph\) is a subgroup of the exceptional holonomy group \g.  More generally, if \(\M\) is an orbifold, say that \(\ph \in \Om^3(\M)\) is a \g\ 3-form if for each chart \(U \cong \lqt{\bb{R}^7}{\Ga}\) in \(\M\), \(\ph|_U\) may be lifted to a \(\Ga\)-invariant \g\ 3-form on \(\bb{R}^7\); define \g\ 4-forms analogously and again say that \(\ph\) and \(\ps\) are torsion-free if both \(\dd\ph = 0\) and \(\dd\ps = 0\).

Now suppose that \(\M\) is closed and let \(\ph\) be a closed \g\ 3-form on \(\M\).  Write \([\ph] \in \dR{3}(\M)\) for the de Rham class defined by \(\ph\) and define \([\ph]_+\) to be \(\lt\{ \ph' \in [\ph] ~\m|~ \ph' \text{ is a \g\ 3-form}\rt\}\).  The Hitchin functional \(\CH: [\ph]_+ \to (0,\0)\) is defined by:
\ew
\CH(\ph') = \bigintsss_\M vol_{\ph'}.
\eew
Then \(\ph' \in [\ph]_+\) is a critical point of \(\CH\) \iff\ it is torsion-free.  Likewise, given a closed \g\ 4-form \(\ps\) on \(\M\), define \([\ps]_+\) to be \(\lt\{ \ps' \in [\ps] ~\m|~ \ps' \text{ is a \g\ 4-form}\rt\}\) and define the Hitchin functional \(\CCH: [\ps]_+ \to (0,\0)\) by:
\ew
\CCH(\ps') = \bigintsss_\M vol_{\ps'}.
\eew
Once again, \(\ps' \in [\ps]_+\) is a critical point of \(\CCH\) \iff\ it is torsion-free.

As proven in \cite{TGo3Fi6&7D}, the critical points of the functional \(\CH\) are non-degenerate local maxima, modulo the actions of diffeomorphisms.  Likewise, \pref{CCH-Prop} proves that the critical points of \(\CCH\) are non-degenerate saddles, modulo the action of diffeomorphisms.\footnote{After proving \pref{CCH-Prop}, the author discovered that a related result was obtained in \cite{STBoaMLCoG2S}.  Note, however, that \pref{CCH-Prop} differs from \cite[Prop.\ 3.4]{STBoaMLCoG2S}, since it not only proves that the critical points of \(\CCH\) are non-degenerate, it also proves that they are saddles.}  Motivated by classical Morse theory, it is natural to ask, therefore, whether the critical points of \(\CH\) and \(\CCH\) have well-defined Morse indices.  Whilst the classical Morse indices of the critical points are not well-defined, in \sref{SMI}, by building on \cite{OeI}, I introduce a new, spectral, `regularised' notion of Morse indices for functionals, which I term the spectral Morse index.

\begin{Thm}[See \trefs{mu3-def-thm} and \ref{mu4-def-thm}]
Let \(\M\) be a closed, oriented 7-orbifold and let \(\ph\) be a torsion-free \g\ 3-form on \(\M\), with corresponding \g\ 4-form \(\ps\).  Then, viewing \(\ph\) as a critical point of the functional \(\CH: [\ph]_+ \to (0,\0)\), the spectral Morse index of \(\ph\) is well-defined.  Likewise, the spectral Morse index of \(\ps\) is well-defined, viewed as a critical point of the functional \(\CCH: [\ps]_+ \to (0,\0)\).  Denote the indices by \(\mu_3(\ph)\) and \(\mu_4(\ps)\), respectively, and let \(\cal{G}^{TF}_2(\M)\) denote the moduli space of torsion-free \g-structures on \(\M\) (i.e.\ the quotient of the space of torsion-free \g\ 3-forms on \(\M\) by the action of \(\Diff_0(\M)\)).  Then \(\mu_3\) and \(\mu_4\) define smooth maps:
\ew
\mu_3,\mu_4: \cal{G}^{TF}_2(\M) \to \bb{R},
\eew
which are invariant under rescaling \(\ph \mt \ell^3\ph\) (\(\ell > 0\)).
\end{Thm}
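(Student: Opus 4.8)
The plan is to realise both \(\mu_3(\ph)\) and \(\mu_4(\ps)\) as \(\ze\)-regularised counts of the negative eigenvalues of the Hessian of the relevant Hitchin functional on a gauge slice, and then to read off well-definedness, smoothness and scale-invariance from the spectral theory of that Hessian. First I would turn the Hessian into an honest operator. At a torsion-free \(\ph\) the tangent space to \([\ph]_+\) consists of exact \(3\)-forms, and \(\CH\) is invariant under \(\Diff_0(\M)\), whose infinitesimal orbit directions \(\dd(\io_X\ph)\) lie in the kernel of the second variation. I would therefore fix a slice transverse to these orbit directions (a Hodge-type gauge condition on the \(2\)-form potential) on which the second variation is represented, via the \(L^2\)-inner product of \(g_\ph\), by a formally self-adjoint elliptic operator \(L_\ph\); the analogous construction on exact \(4\)-forms yields \(L_\ps\) for \(\CCH\). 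On a closed orbifold these operators have discrete real spectrum accumulating only at \(\pm\0\), with finite multiplicities.

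To define the invariants I would set \(\mu = \ze^-_L(0)\), where \(\ze^-_L(s) = \sum_{\la_j < 0} |\la_j|^{-s}\) runs over the negative eigenvalues of \(L\) on the slice; equivalently \(\ze^-_L = \tfrac12(\ze_{|L|} - \eta_L)\), with \(\ze_{|L|}(s) = \sum_j |\la_j|^{-s}\) and \(\eta_L(s) = \sum_j \sgn(\la_j)|\la_j|^{-s}\). Both series converge for \(\Re s\) large and, by the short-time heat-trace expansion of \(|L|\), continue meromorphically; well-definedness then reduces to regularity at \(s = 0\), which is standard for \(\ze_{|L|}\) and is, for \(\eta_L\), the theorem (\APS, Gilkey) that the \(\eta\)-function of a self-adjoint elliptic operator is holomorphic at the origin. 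In the local-maximum case of \cite{TGo3Fi6&7D} the transverse spectrum of \(L_\ph\) is entirely negative, so \(\mu_3(\ph) = \ze_{|L_\ph|}(0)\) is a pure \(\ze\)-value; in the saddle case of \pref{CCH-Prop} both signs occur and the \(\eta\)-term genuinely contributes to \(\mu_4(\ps)\). The only modification needed to pass from manifolds to orbifolds is to use the orbifold heat-trace expansion, whose additional contributions are localised on the singular strata but leave the meromorphic structure — and in particular regularity at \(s = 0\) — intact.

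For smoothness on \(\cal{G}^{TF}_2(\M)\) I would first observe that the construction is natural in \(\ph\), hence \(\Diff_0(\M)\)-invariant, so \(\mu_3\) and \(\mu_4\) descend to the moduli space, on which they may be differentiated. As \(\ph\) varies smoothly, \(L_\ph\) is a smooth family of elliptic operators; \(\ze_{|L_\ph|}(0)\), being a local heat-coefficient integral (the transverse kernel is trivial at a non-degenerate critical point), varies smoothly, and the variation of \(\eta_{L_\ph}(0)\) is likewise given by the integral of a local quantity. The one danger is that \(\ze^-_L(0)\), as a regularised count of negative eigenvalues, jumps by an integer whenever an eigenvalue crosses \(0\) (spectral flow). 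Here the decisive input is that \emph{every} point of \(\cal{G}^{TF}_2(\M)\) is a critical point and, by \cite{TGo3Fi6&7D} and \pref{CCH-Prop}, a non-degenerate one, so the kernel of \(L_\ph\) (resp.\ \(L_\ps\)) on the slice is trivial. Thus the transverse spectrum is bounded away from \(0\) at each point and, by continuity, stays so locally; no eigenvalue ever crosses \(0\), the spectral flow vanishes identically, and \(\mu_3, \mu_4\) are genuinely smooth \(\bb{R}\)-valued functions.

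Rescaling invariance is then immediate. Under \(\ph \mt \ell^3\ph\) one has \(g_\ph \mt \ell^2 g_\ph\), so the Hodge star, and hence the second-variation operator, rescales by a fixed power of \(\ell\); thus \(L\) is multiplied by a single positive constant \(c_\ell\), and likewise for \(L_\ps\). Since \(\ze^-_{c_\ell L}(s) = c_\ell^{-s}\ze^-_L(s)\) and \(c_\ell^{-s}\big|_{s=0} = 1\), the value at \(s = 0\) is unchanged, giving \(\mu_3(\ell^3\ph) = \mu_3(\ph)\) and similarly for \(\mu_4\). I expect the main obstacle to lie in the smoothness step rather than in the definition: one must produce a genuinely elliptic, self-adjoint representative of the Hessian on a slice, control its smooth dependence on \(\ph\), and establish the local variation formula for \(\eta_{L_\ph}(0)\) over an orbifold. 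Granting these analytic inputs, the clean and decisive point is that non-degeneracy at every torsion-free structure forbids spectral flow, which is exactly what promotes the (infinite, integer-valued) classical Morse index to a finite, smooth, real-valued invariant.
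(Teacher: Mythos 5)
Your overall architecture coincides with the paper's: a Hodge-gauge slice for the Hessian, the regularised negative-eigenvalue count \(\tfrac{1}{2}(\ze-\eta)\) evaluated at \(s=0\), the Kawasaki--Farsi orbifold extension of the \APS\ theory for well-definedness, invertibility (non-degeneracy) to rule out spectral flow for smoothness, and the homogeneity argument \(c_\ell^{-s}\big|_{s=0}=1\) for scale invariance. However, there is a genuine gap at the foundational step. You assert that the second variation ``is represented, via the \(L^2\)-inner product, by a formally self-adjoint \emph{elliptic} operator \(L_\ph\)'' on the slice \(\dd^*\Om^3(\M)\cap\Om^2_{14}(\M)\), and you then apply heat-trace expansions and \(\eta\)-regularity at the origin to \(L_\ph\). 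But that slice is not the space of sections of any vector bundle (the constraint \(\dd^*\ga=0\) is a differential, symbol-level condition, not a bundle condition), so ellipticity has no direct meaning for \(L_\ph\), and none of the machinery you invoke (heat kernels, meromorphic continuation, regularity of \(\eta\) at \(0\)) applies to it as stated. Worse, the natural ambient operator \(-\dd^*\dd\) on \(\Om^2(\M)\) is genuinely \emph{not} elliptic: its principal symbol at \(\xi\) annihilates every \(2\)-form of the shape \(\xi\w v\). So the step ``on a closed orbifold these operators have discrete real spectrum \dots\ with finite multiplicities'' together with the heat-trace continuation would fail as written.

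This is precisely where the paper does real work: it extends the Hessian to the invertible, self-adjoint, second-order pseudodifferential operator
\ew
\cal{E}(\ph) = \pi_{harm,\ph} + \De + 2\dd^*I\dd
\eew
acting on \emph{all} of \(\Om^2(\M)\), checks via the refined Hodge decomposition that \(\cal{E}(\ph)\) acts diagonally (as \(\Id\), \(\dd\dd^*\), \(\dd^*\dd\), \(-\dd^*\dd\) on the four summands) and hence is invertible with the same negative spectrum as the Hessian on the slice, and only then applies the black-box Theorem \ref{S-APS} on \(\Ps^{>0}_{\text{inv-sa}}(\M;E)\); similarly \(\cal{F}(\ps) = \pi_{harm,\ps} + \De + 2\dd^*J\dd\) for \(\mu_4\). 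Your proposal is missing this extension device, or some substitute for it (for instance Seeley-type theory for \(\Tr\lt(P\De^{-s}\rt)\) with \(P\) the order-zero pseudodifferential projection onto the slice, using that \(\dd^*\dd=\De\) on co-exact forms). One small compensation of your slice-level viewpoint: the scaling argument becomes cleaner, since on the slice the Hessian literally scales by \(\ell^{-2}\), whereas the paper must observe that \(\cal{E}\lt(\ell^3\ph\rt) \ne \ell^{-2}\cal{E}(\ph)\) (because of the \(\pi_{harm,\ph}\) term) and argue only that the negative spectra coincide.
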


Thus, the spectral Morse indices \(\mu_3\) and \(\mu_4\) define two new invariants of torsion-free \g-structures on orbifolds.  The second half of this paper is devoted to computing these invariants explicitly on Joyce orbifolds.  Let \(\bb{T}^7 = \rqt{\bb{R}^7}{\bb{Z}^7}\), let \(\Ga \pc \SL(7;\bb{Z}) \sdp \bb{T}^7\) be a finite subgroup and consider the quotient \(\M_\Ga = \lqt{\bb{T}^7}{\Ga}\).  \(\M_\Ga\) is called a Joyce orbifold if it admits torsion-free \g-structures.  Given \(A \in \End(\bb{R}^7)\), define:
\ew
\Tr_8^{\SU(3)}(A) = \frac{\Tr(A)^2 - \Tr(A^2)}{2} - 2\Tr(A) + 1
\eew
and:
\ew
\Tr_{12}^{\SU(3)}(A) = \frac{\Tr(A)^3 + 2\Tr(A^3) - 3\Tr(A^2)\Tr(A)}{6} - \frac{\Tr(A)^2 - \Tr(A^2)}{2} - 2.
\eew

\begin{Thm}[See \trefs{mu3-J-thm} and \ref{mu4-J-thm}]\label{Joyce-comp-thm}
Let \(\M_\Ga = \lqt{\bb{T}^7}{\Ga}\) be a Joyce orbifold.  The \(\mu_3\)- and \(\mu_4\)-invariants \(\mu_3, \mu_4:\cal{G}^{TF}_2(\M_\Ga) \to \bb{R}\) are constant, taking the values:
\ew
\mu_3(\M_\Ga) = \frac{-1}{|\Ga|} \sum_{\mc{A} = (A,t) \in \Ga} \Tr^{\SU(3)}_8(A)
\eew
and:
\ew
\mu_4(\M_\Ga) = \frac{-1}{|\Ga|} \sum_{\mc{A} = (A,t) \in \Ga} \Tr^{\SU(3)}_{12}(A),
\eew
respectively.
\end{Thm}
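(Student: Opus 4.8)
The plan is to reduce the computation to an explicit flat model. First I would observe that every torsion-free \g-structure on \(\M_\Ga\) is flat: since \(\M_\Ga\) is finitely covered by \(\bb{T}^7 = \rqt{\bb{R}^7}{\bb{Z}^7}\), any such structure pulls back to a torsion-free \g-structure on \(\bb{T}^7\), whose metric is Ricci-flat with \(b_1 = 7\); by a Bochner argument the seven harmonic \(1\)-forms are parallel, forcing the metric to be flat.  Hence \(\cal{G}^{TF}_2(\M_\Ga)\) is exactly the space of flat \g-structures, and it suffices to evaluate the invariants on such a model.  Recalling from \sref{SMI} that \(\mu_3(\ph)\) and \(\mu_4(\ps)\) are the zeta-regularised counts of negative eigenvalues of the Hessians \(H_3, H_4\) of \(\CH, \CCH\) at the critical point, I would write \(\mu_3 = \zeta_{-H_3}(0)\) and \(\mu_4 = \zeta_{(H_4)_-}(0)\), where \(\zeta_L(0)\) denotes the analytic continuation to \(s = 0\) of \(\sum_{\la > 0}\la^{-s}\).

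Second, for a flat structure the Hessians are, on the \(\Diff_0\)-slice, natural constant-coefficient Laplace-type operators on \(\bb{T}^7\), diagonalised by the Fourier modes \(e^{2\pi i\<\xi,x\?}\) with \(\xi\) in the dual lattice.  The key algebraic input is the identification of the effective \emph{virtual} \g-representation governing each regularised count, namely
\ew
R_8 = \ww{2}V \ominus 2V \oplus \bb{R} \qquad\text{and}\qquad R_{12} = \ww{3}V \ominus \ww{2}V \ominus 2\bb{R} \qquad (V = \bb{R}^7),
\eew
where the alternating \(\ww{1}\)- and \(\ww{0}\)-terms encode both the sign structure of the second variation (for the saddle \(\CCH\)) and the \(\Diff_0\)-gauge-fixing.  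Using the Newton-identity expressions \(\Tr_{\ww{2}V}(A) = \tfrac12(\Tr(A)^2 - \Tr(A^2))\) and \(\Tr_{\ww{3}V}(A) = \tfrac16(\Tr(A)^3 - 3\Tr(A)\Tr(A^2) + 2\Tr(A^3))\), one checks directly that \(\Tr_{R_8}(A) = \Tr^{\SU(3)}_8(A)\) and \(\Tr_{R_{12}}(A) = \Tr^{\SU(3)}_{12}(A)\); the superscript \(\SU(3)\) reflects that, under \(\ww{2}V = \fr{g}_2 \oplus V\) and \(\fr{g}_2|_{\SU(3)} = \fr{su}(3)\oplus\bb{C}^3\oplus\ol{\bb{C}}^3\), the representation \(R_8\) restricts to the adjoint \(\fr{su}(3)\).

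Third, I would pass to the orbifold via the equivariant decomposition
\ew
\mu_3 = \zeta_{-H_3}(0) = \frac{1}{|\Ga|}\sum_{\mc{A} = (A,t)\in\Ga}\zeta_{-H_3,\mc{A}}(0), \qquad \zeta_{-H_3,\mc{A}}(s) = \Tr\lt(\mc{A}\cdot(-H_3)^{-s}\rt),
\eew
and likewise for \(\mu_4\).  On the flat torus only the frequencies fixed by \(A\) contribute to each trace, and one finds \(\zeta_{-H_3,\mc{A}}(s) = \Tr_{R_8}(A)\cdot Z_{\mc{A}}(s)\), where \(Z_{\mc{A}}(s) = \sum_{\xi\neq 0,\,A\xi = \xi}e^{2\pi i\<t,\xi\?}|\xi|^{-2s}\) is the Epstein \(\zeta\)-function of the \(A\)-fixed sublattice, twisted by the character \(\xi\mapsto e^{2\pi i\<t,\xi\?}\).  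The crucial point is that every \(A\in\Gg_2\pc\SO(7)\) fixes a nonzero vector (an element of \(\SO(7)\) in odd dimension always has eigenvalue \(+1\)), so the fixed sublattice has positive rank and \(Z_{\mc{A}}(0) = -1\) for every \(\mc{A}\in\Ga\), independently both of the lattice and of the twisting character.  Hence \(\zeta_{-H_3,\mc{A}}(0) = -\Tr^{\SU(3)}_8(A)\), summing to the stated formula for \(\mu_3\); the computation for \(\mu_4\) is identical with \(R_8\) replaced by \(R_{12}\).  Constancy on \(\cal{G}^{TF}_2(\M_\Ga)\) is then immediate, since the fibre traces depend only on the linear parts \(A\) and the value \(Z_{\mc{A}}(0) = -1\) is independent of the continuous modulus parametrising the flat structure.

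I expect the main obstacle to be the fibre identification of the second step: computing the second variations of the Hitchin functionals explicitly enough to recognise \(R_8\) and \(R_{12}\) as the effective virtual fibres.  This requires differentiating the volume functionals twice along the slice, diagonalising the resulting principal symbols, and tracking the \g-representation content through the decomposition of \(\ww{\pt}V\) into irreducibles, while correctly accounting for the \(\Diff_0\)-gauge-fixing (which supplies the \(\ww{1}\)- and \(\ww{0}\)-terms and the kernel directions corresponding to the moduli).  Once the fibres are correctly identified, the equivariant localisation and the Epstein-\(\zeta\) evaluation are comparatively routine, modulo care with the overall sign and with the treatment of the zero modes.
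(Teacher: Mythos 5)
Your overall route coincides with the paper's: reduce to the flat model, Fourier/Peter--Weyl decompose on \(\bb{T}^7\), average over \(\Ga\) so that only \(A\)-fixed frequencies contribute, multiply the fibre trace by a twisted Epstein \(\ze\)-function of the fixed sublattice, and use Epstein's theorem that the value at \(s=0\) is \(-1\) independently of rank and twist (this is exactly \eref{inv-dim}, \eref{initial-sum-3} and the rearrangement following it). Your virtual representations are also the correct ones: \lref{mu3-traces} and its \(\mu_4\)-analogue show that the fibre at an \(A\)-fixed frequency \(l \ne 0\) is \(\lt\{\al\in\ww[14]{2}\lt(\bb{R}^7\rt)^* ~\m|~ l\hk\al = 0\rt\}\) for \(\mu_3\) and \(\lt\{\al\in\ww[27]{3}\lt(\bb{R}^7\rt)^* ~\m|~ l\hk\al = 0\rt\}\) for \(\mu_4\), with characters precisely \(\Tr_{R_8}\) and \(\Tr_{R_{12}}\).

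The genuine gap is the step you defer, which is the mathematical core of the computation, and the heuristic you offer for it is incomplete. Writing \(V = \bb{R}^7\) and \(\bb{B} = \<l\?^\bot\), the alternating terms in \(R_8\), \(R_{12}\) arise from three distinct sources: the \(\Diff_0\)-gauge-fixing removes the type-7 component (\(\ominus V\)); the restriction to the negative subspace of the saddle removes the type-1 component (\(\ominus\bb{R}\), and only for \(\mu_4\)); but the remaining terms come from the coclosedness constraint \(l\hk\al = 0\) — i.e.\ from the identification \(\T_\ph[\ph]_+ = \dd\Om^2(\M) \cong \dd^*\Om^3(\M)\) encoding that variations stay in the cohomology class — which your heuristic omits entirely. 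Concretely, what must be proven is that the contractions \(l\hk\cdot:\ww[14]{2}\lt(\bb{R}^7\rt)^*\to\bb{B}^*\) and \(l\hk\cdot:\ww[27]{3}\lt(\bb{R}^7\rt)^*\to\ww{2}\bb{B}^*\) are surjective, so that the kernels are, as virtual representations of the elements fixing \(l\), equal to \(\ww[14]{2}\ominus\bb{B}^* = \ww{2}V\ominus 2V\oplus\bb{R}\) and \(\ww[27]{3}\ominus\ww{2}\bb{B}^* = \ww{3}V\ominus\ww{2}V\ominus 2\bb{R}\). This is a statement of \g\ representation theory, not of Hessian sign structure: the paper establishes it via \(\Stab_{\Gg_2}(l)\cong\SU(3)\) \cite{A3F&ESLGoTG2} and Donaldson's explicit \(\SU(3)\)-decomposition of \(\ww[14]{2}\lt(\bb{R}^7\rt)^*\) \cite{AEBVPfG2S}, identifying the fibres as \(\lt[\ww[8]{1,1}\bb{B}^*\rt]\) and \(\ls\ww[12]{2,1}\bb{B}^*\rs\). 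One further small point: the Epstein evaluation needs the fixed \emph{lattice} \(\lt\{l\in\bb{Z}^7 ~\m|~ Al = l\rt\}\) to be non-zero, not merely the fixed subspace; your \(\SO(7)\)-eigenvalue argument produces a non-zero real fixed vector, and one must add that \(A - \Id\) is an integer matrix, so its kernel is defined over \(\bb{Q}\) and hence contains a non-zero integer point, as the paper spells out.
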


The proof of \tref{Joyce-comp-thm} reveals an interesting link between the \(\mu\)-invariants and twisted Epstein \(\ze\)-functions, as introduced in \cite{ZTAZI}; see \sref{mu-comp-RepT} for details.  I also remark that, despite much recent progress \cite{AAIoG2M, nIoETCS} in obtaining computable invariants of manifolds with torsion-free \g-structures constructed using the `twisted connected sum' construction described in \cite{TCS&SRH,ACCY3FfWF3F,ETCSG2M}, obtaining computable invariants of torsion-free \g-structures on Joyce manifolds (i.e.\ manifolds obtained  by resolving the singularities in Joyce orbifolds as in \cite{CR7MwHG2I, CR7MwHG2II, CMwSH}) is still an open problem.  Due to the computability of \(\mu_3\) and \(\mu_4\) on Joyce orbifolds, it is the author's hope that these invariants will provide computable invariants on Joyce manifolds.

The results of this paper were obtained during the author's doctoral studies, which where supported by EPSRC Studentship 2261110.\\

\section{Preliminaries}

\subsection{Orbifolds}\label{Orb}

The material in this subsection is largely based on \cite[\S\S1.1--1.3]{O&ST} and \cite[\S14.1]{THKLFPFftScDO}.  Let \(E\) be a topological space.  
\begin{Defn}
An \(n\)-dimensional orbifold chart \(\Xi\) is the data of a connected, open neighbourhood \(U\) in \(E\), a finite subgroup \(\Ga \pc \GL(n;\bb{R})\), a connected, \(\Ga\)-invariant open neighbourhood \(\tld{U}\) of \(0 \in \bb{R}^n\) and a homeomorphism \(\ch: \lqt{\tld{U}}{\Ga} \to U\).  Write \(\tld{\ch}\) for the composite \(\tld{U} \oto{quot} \lqt{\tld{U}}{\Ga} \oto{\ch} U\).  Say that \(\Xi\) is centred at \(e \in E\) if \(e = \tld{\ch}(0)\).   In this case, \(\Ga\) is called the orbifold group of \(e\), denoted \(\Ga_e\).

Now consider two orbifold charts \(\Xi_1 = \lt(U_1,\Ga_1,\tld{U}_1,\ch_1\rt)\) and \(\Xi_2 = \lt(U_2,\Ga_2,\tld{U}_2,\ch_2\rt)\) with \(U_1 \cc U_2\).   An embedding of \(\Xi_1\) into \(\Xi_2\) is the data of a smooth, open embedding \(\io_{12}:\tld{U}_1 \emb \tld{U}_2\) and a group isomorphism \(\la_{12}: \Ga_1 \to \Stab_{\Ga_2}(\io_{12}(0))\) such that for all \(x \in \tld{U}_1\) and all \(\si \in \Ga_1\), \(\io_{12}(\si \cdot x) = \la_{12}(\si) \cdot \io_{12}(x)\), and such that the following diagram commutes:
\ew
\bcd[column sep = 13mm, row sep = 3mm]
\tld{U}_1 \ar[rr, ^^22 \mla{\io_{12}} ^^22] \ar[dd, ^^22 \mla{\tld{\ch}_1} ^^22] & & \tld{U}_2 \ar[dd, ^^22 \mla{\tld{\ch}_2} ^^22]\\
& & \\
U_1 \ar[rr, ^^22\mla{incl}^^22, hook] &  & U_2
\ecd
\eew

Now let \(\Xi_1\) and \(\Xi_2\) be arbitrary.  \(\Xi_1\) and \(\Xi_2\) are compatible if for every \(e \in U_1 \cap U_2\), there exists a chart \(\Xi_e = \lt(U_e,\Ga_e,\tld{U}_e,\ch_e\rt)\) centred at \(e\) together with embeddings \((\io_{e1},\la_{e1}): \Xi_e \emb \Xi_1\) and \((\io_{e2},\la_{e2}): \Xi_e \emb \Xi_2\).  If \(U_1 \cap U_2 = \es\), then \(\Xi_1\) and \(\Xi_2\) are automatically compatible, however if \(U_1 \cap U_2 \ne \es\) and \(\Xi_1\) and \(\Xi_2\) are compatible, then \(\Xi_1\) and \(\Xi_2\) have the same dimension; moreover, if \(\Xi_1\) and \(\Xi_2\) are centred at the same point \(e \in E\), then \(\Ga_1 \cong \Ga_2\) and thus the orbifold group \(\Ga_e\) is well-defined up to isomorphism.

An orbifold atlas for \(E\) is a collection of compatible orbifold charts \(\fr{A}\) which is maximal in the sense that if a chart \(\Xi\) is compatible with every chart in \(\fr{A}\), then \(\Xi \in \fr{A}\).  An orbifold is a connected, Hausdorff, second-countable topological space \(E\) equipped with an orbifold atlas \(\fr{A}\).  Every chart of \(E\) has the same dimension \(n\); call this the dimension of the orbifold.
\end{Defn}

Let \(E\) be an \(n\)-orbifold.  Given any chart \(\Xi = \lt(U, \Ga, \tld{U}, \ch\rt)\) for \(E\), the action of \(\Ga\) on \(\tld{U}\) naturally lifts to an action of \(\Ga\) on \(\T\tld{U}\) by bundle automorphisms.  Given a second chart \(\Xi' = (U',\Ga',\tld{U}',\ch')\) embedding into \(\Xi\), the map \(\tld{U}' \emb \tld{U}\) induces an equivariant embedding of bundles \(\T\tld{U}' \emb \T\tld{U}\).  Define:
\ew
\T E = \rqt{\lt[\coprod_{\Xi} \lt(\lqt{\T\tld{U}}{\Ga}\rt)\rt]}{\tl}
\eew
where the quotient by \(\tl\) denotes that one should glue along all embeddings \(\T\tld{U}' \emb \T\tld{U}\).  The resulting space \(\T E\) is an orbifold, and the natural projection \(\pi: \T E \to E\) gives \(\T E\) the structure of an orbifold vector bundle over \(E\) in the sense of \cite[p.\ 173]{THKLFPFftScDO}.  \(\T E\) is termed the tangent bundle of \(E\).  Given \(e \in E\), the tangent space at \(e\), denoted \(\T_eE\), is the preimage of \(e\) under the map \(\T E \to E\).  It may be identified with the quotient space \(\lqt{\bb{R}^n}{\Ga_e}\), where \(\Ga_e\) is the orbifold group at \(e\).  A section of the tangent bundle is then simply a continuous map \(X: E \to \T E\) such that for each local chart \(\Xi = \lt(U, \Ga, \tld{U}, \ch\rt)\) for \(E\), there is a smooth, \(\Ga\)-invariant, vector field \(\tld{X}: \tld{U} \to \T \tld{U}\) such that the following diagram commutes:
\ew
\bcd[column sep = 13mm, row sep = 3mm]
\tld{U} \ar[rr, ^^22 \mla{\tld{X}} ^^22] \ar[dd, ^^22 \mla{\tld{\ch}} ^^22] & & \T \tld{U} \ar[dd]\\
& &\\
U \ar[rr, ^^22 \mla{X} ^^22] &  & \pi^{-1}(U)
\ecd
\eew
In a similar way, one may define the cotangent bundle of an orbifold, tensor bundles, bundles of exterior forms etc., with sections of these bundles defined analogously.  \(E\) is orientable if \(\ww{n}\T^* E\) is trivial, i.e.\ has a non-vanishing section (in particular, all the orbifold groups of \(E\) lie in \(\GL_+(n;\bb{R})\)); an orientation is simply a choice of trivialisation.  A \g\ 3-form on an oriented orbifold \(E\) is a section of \(\ww{3}\T^* E\) such that each local representation \(\tld{\ph}\) of \(\ph\) is a \g\ 3-form in the classical (manifold) sense.  In particular, the orbifold group \(\Ga_e\) at each point lies in the stabiliser of \(\tld{\ph}|_0\) and is thus isomorphic to a subgroup of \g.  Riemannian metrics and other geometric structures can be defined similarly.

\subsection{The moduli space \(\cal{G}^{TF}_2\lt(\M_\Ga\rt)\)}

Let:
\ew
\ph_0 = \dd x^{123} + \dd x^{145} + \dd x^{167} + \dd x^{246} - \dd x^{257} - \dd x^{347} - \dd x^{356}
\eew
denote the standard, flat \g\ 3-form on \(\bb{R}^7\) (viewed as a manifold) and consider the orbifold \(\M_\Ga = \lqt{\bb{T}^7}{\Ga}\) for \(\Ga \pc \SL(7;\bb{Z}) \sdp \bb{T}^7\) a finite subgroup of automorphisms of \(\bb{T}^7\).  If \(\ph\) is a torsion-free \g\ 3-form on \(\M_\Ga\), then \(\ph\) lifts to define a \(\Ga\)-invariant torsion-free \g\ 3-form \(\tld{\ph}\) on \(\bb{T}^7\), which is necessarily constant (\wrt\ the usual parallelism of \(\bb{T}^7\)) by a standard `Bochner-type' argument, since \(b^1(\bb{T}^7) = 7 = \dim(\bb{T}^7)\); see \cite[Thm.\ 3.5.4 and 3.5.5]{CMwSH} for details.  Thus \(\tld{\ph} = F^*\ph_0\) for some \(F \in \GL_+(7;\bb{R})\).  Conversely, given \(F \in \GL_+(7;\bb{R})\), the \g\ 3-form \(F^*\ph_0\) descends to \(\M_\Ga\) \iff\ \(\Ga\) preserves \(F^*\ph_0\).  This is equivalent to the condition that for all \(\mc{A} = (A,t) \in \Ga \pc \SL(7;\bb{Z}) \sdp \bb{T}^7\), \(A^*F^*\ph_0 = F^*\ph_0\), i.e.\ that \(FAF^{-1} \in \Gg_2\).  Thus, writing \(\fr{p}_1:\SL(7;\bb{Z}) \sdp \bb{T}^7 \to \SL(7;\bb{Z})\) for the projection homomorphism and defining:
\ew
G^{\Gg_2}_\Ga = \lt\{F \in \GL_+(n;\bb{R}) ~\m|~ F\fr{p}_1(\Ga)F^{-1} \pc \Gg_2 \rt\}
\eew
it has been established that the map:
\ew
\bcd[row sep = 0pt]
\be:G^{\Gg_2}_\Ga \ar[r] &\cal{G}^{TF}_2(\M_\Ga)\\
F \ar[r, maps to] & F^*\ph_0
\ecd
\eew
is surjective.  Call \(\M_\Ga\) a Joyce orbifold if \(G^{\Gg_2}_\Ga \ne \es\), equivalently if \(\M_\Ga\) admits torsion-free \g-structures.

Next, note that \g\ acts on \(G^{\Gg_2}_\Ga\) on the left, and that the map \(\be\) is invariant under this action.  Moreover, the automorphism group of \(\M_\Ga\) is \(\Norm_{\SL(7;\bb{Z}) \sdp \bb{T}^7}(\Ga)\) (i.e.\ the normaliser of \(\Ga\) in \(\SL(7;\bb{Z}) \sdp \bb{T}^7\)) where \(\mc{A} \in \Norm_{\SL(7;\bb{Z}) \sdp \bb{T}^7}(\Ga) \cc \SL(7;\bb{Z}) \sdp \bb{T}^7\) acts via the diagram:
\ew
\bcd
\bb{T}^7 \ar[r, ^^22\mc{A}^^22] \ar[d, ^^22 quot ^^22] & \bb{T}^7 \ar[d, ^^22 quot ^^22]\\
\lqt{\bb{T}^7}{\Ga} \ar[r] & \lqt{\bb{T}^7}{\Ga}
\ecd
\eew
Then \(\fr{p}_1\lt(\Norm_{\SL(7;\bb{Z}) \sdp \bb{T}^7}(\Ga)\rt)\) acts on \(G^{\Gg_2}_\Ga\) on the right, and the map \(\be\) is invariant under this action.  It follows that the moduli space of torsion-free \g-structures on \(\M_\Ga\) is given by:
\ew
\cal{G}^{TF}_2(\M_\Ga) \cong \bqt{G^{\Gg_2}_\Ga}{\Gg_2}{\fr{p}_1\lt(\Norm_{\SL(7;\bb{Z}) \sdp \bb{T}^7}(\Ga)\rt)}.
\eew
(Cf.\ \cite[p.\ 314]{TESaMfFT} for a similar discussion of flat metrics on tori.)

\subsection{Type-decomposition, Hodge decomposition and Hitchin functionals}

Consider the action of \(\Gg_2 = \Stab_{\GL_+(7;\bb{R})}(\vph_0)\) on \(\bb{R}^7\).  The spaces \(\ww{0}\lt(\bb{R}^7\rt)^*\), \(\ww{1}\lt(\bb{R}^7\rt)^*\), \(\ww{6}\lt(\bb{R}^7\rt)^*\) and \(\ww{7}\lt(\bb{R}^7\rt)^*\) are simple \g-modules, however the remaining exterior powers of \(\lt(\bb{R}^7\rt)^*\) are reducible, decomposing into simple \g-modules as:
\e\label{G2TD}
\ww{2}\lt(\bb{R}^7\rt)^* &= \ww[7]{2}\lt(\bb{R}^7\rt)^* \ds \ww[14]{2}\lt(\bb{R}^7\rt)^*;\\
\ww{3}\lt(\bb{R}^7\rt)^* &= \ww[1]{3}\lt(\bb{R}^7\rt)^* \ds \ww[7]{3}\lt(\bb{R}^7\rt)^* \ds \ww[27]{3}\lt(\bb{R}^7\rt)^*;\\
\ww{4}\lt(\bb{R}^7\rt)^* &= \ww[1]{4}\lt(\bb{R}^7\rt)^* \ds \ww[7]{4}\lt(\bb{R}^7\rt)^* \ds \ww[27]{4}\lt(\bb{R}^7\rt)^*;\\
\ww{5}\lt(\bb{R}^7\rt)^* &= \ww[7]{5}\lt(\bb{R}^7\rt)^* \ds \ww[14]{5}\lt(\bb{R}^7\rt)^*,
\ee
where the subscript in each case denotes the dimension of the module and:
\e\label{G2TDEx}
\begin{aligned}
\ww[7]{2}\lt(\bb{R}^7\rt)^* &= \lt\{v \hk \vph_0 ~\m|~ v \in \bb{R}^7 \rt\} \cong \bb{R}^7;\\
\ww[14]{2}\lt(\bb{R}^7\rt)^* &= \lt\{\al \in \ww{2}\lt(\bb{R}^7\rt)^*~\m|~\al \w \Hs_{\vph_0} \vph_0 = 0\rt\} \cong \fr{g}_2;\\
\ww[1]{3}\lt(\bb{R}^7\rt)^* &= \bb{R} \1 \vph_0 \cong \bb{R};\\
\ww[7]{3}\lt(\bb{R}^7\rt)^* &= \lt\{v \hk \Hs_{\vph_0} \vph_0 ~\m|~ v\in \bb{R}^7 \rt\} \cong \bb{R}^7;\\
\ww[27]{3}\lt(\bb{R}^7\rt)^* &= \lt\{a \in \ww{3}\lt(\bb{R}^7\rt)^* ~\m|~ a\w \vph_0 = 0 \,\text{and}\, a \w \Hs_{\vph_0}\vph_0 = 0\rt\} \cong \ss[0]{2}\lt(\bb{R}^7\rt)^*.
\end{aligned}
\ee
(The notation \(\ss[0]{2}\lt(\bb{R}^7\rt)^*\) refers to the space of trace-free symmetric bilinear forms on \(\bb{R}^7\), where the trace is computed using the Euclidean metric.)  Any two simple modules of a given dimension are isomorphic; in particular:
\ew
\Hs_{\vph_0}:\ww[q]{p}\lt(\bb{R}^7\rt)^* \oto{~\mns{\cong}~} \ww[q]{7-p}\lt(\bb{R}^7\rt)^*
\eew
is a \g-equivariant isomorphism for each \(p = 0,...,7\) and corresponding \(q\); thus explicit expressions for all the simple \g-modules occurring in \eref{G2TD} can be deduced from \eref{G2TDEx} via \(\Hs_{\vph_0}\).  As is customary, I write \(\pi_d:\ww{\pt}\lt(\bb{R}^7\rt)^*\to\ww[d]{\pt}\lt(\bb{R}^7\rt)^*\) for the \(g_{\vph_0}\)-orthogonal projection; since for each exterior power no subscript occurs more than once, no ambiguity should arise from this notation.  Exterior forms lying in a particular space \(\ww[q]{p}\lt(\bb{R}^7\rt)^*\) are said to have a definite type and for an arbitrary \(p\)-form \(\si\), the decomposition \(\si = \sum_q \pi_q(\si)\) is called the type-decomposition of \(\si\).

Now let \(\M\) be an oriented 7-orbifold and let \(\ph\) be a \g\ 3-form on \(\M\).  Given a chart \(\Xi = \lt(U, \Ga, \tld{U}, \ch\rt)\) for \(\M\), writing \(\tld{\ph}\) for the lift of \(\ph\) to \(\tld{U}\) as in \sref{Orb}, \(\tld{\ph}\) induces a \(\Ga\)-invariant decomposition \(\ww{p}\T^*\tld{U} \cong \Ds_q \ww[q]{p}\T^*\tld{U}\) and hence one naturally obtains a decomposition \(\ww{p}\T^*\M = \Ds_q \ww[q]{p}\T^*\M\), where each \(\ww[q]{p}\T^*\M\) is an orbifold vector bundle over \(\M\), with fibre over \(x\) isomorphic to \(\lqt{\ww[q]{p}\lt(\bb{R}^7\rt)^*}{\Ga_x}\) (which is well-defined, since \(\Ga_x\) is isomorphic to a subgroup of \g).  Thus \(\ph\) naturally induces a type-decomposition on the exterior forms of \(\M\).

If \(\ph\) is torsion-free and \(\M\) is closed, then this type-decomposition is closely related to the Hodge decomposition of exterior forms on \(\M\).  Recall that the usual Hodge decomposition:
\ew
\Om^p(\M) = \cal{H}^p(\M) \ds \De\Om^p(\M) = \cal{H}^p(\M) \ds \dd\Om^{p-1}(\M) \ds \dd^*\Om^{p+1}(\M)
\eew
is valid on closed orbifolds, where \(\cal{H}^p(\M)\) denotes the space of harmonic \(p\)-forms, \(\De\) denotes the Hodge Laplacian and \(\dd^*\) denotes the co-exterior derivative defined by the metric \(g_\ph\) (see \cite[\S7]{TDTfVM}).  The following result, proved for manifolds in \cite[Thm.\ 3.5.3]{CMwSH}, is easily seen to also hold on orbifolds:

\begin{Thm}\label{RefHD}
For any \(q\):
\ew
\De\circ\pi_q = \pi_q\circ\De.
\eew
In particular, the Hodge decomposition on \(\M\) can be refined to give:
\ew
\Om^p(\M) = \Ds_{q} \lt(\cal{H}^p_q(\M) \ds \De\Om^p_q(\M)\rt),
\eew
where \(\cal{H}^p_q(\M) = \cal{H}^p(\M)\cap\Om^p_q(\M)\) is the space of harmonic \(k\)-forms of type \(q\).
\end{Thm}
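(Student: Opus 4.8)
The plan is to reduce the whole statement to the operator identity \(\De\circ\pi_q = \pi_q\circ\De\) on \(\Om^\pt(\M)\), after which the ``In particular'' clause is formal. Each \(\pi_q\) is the \(g_\ph\)-orthogonal projection onto \(\Om^p_q(\M)\), hence an \(L^2\)-self-adjoint idempotent; once it commutes with the essentially self-adjoint, discrete-spectrum operator \(\De\), the two may be simultaneously diagonalised. Concretely, commutation shows that \(\De\) preserves each \(\Om^p_q(\M)\) and that \(\pi_q\) preserves both summands of \(\Om^p(\M) = \cal{H}^p(\M)\ds\De\Om^p(\M)\); intersecting the two decompositions then gives \(\Om^p_q(\M) = \cal{H}^p_q(\M)\ds\De\Om^p_q(\M)\) with \(\cal{H}^p_q(\M) = \cal{H}^p(\M)\cap\Om^p_q(\M)\) and \(\De\Om^p_q(\M) = \De\Om^p(\M)\cap\Om^p_q(\M)\), which is precisely the asserted refinement. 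Finally, since the identity \(\De\pi_q = \pi_q\De\) is local, it suffices to verify it on the lift \(\tld{\ph}\) in each orbifold chart \(\tld{U}\), where it becomes the corresponding statement for the \(\Ga\)-invariant torsion-free \g\ 3-form \(\tld{\ph}\) on the \emph{manifold} \(\tld{U}\).

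Thus the substance is the manifold identity, \cite[Thm.\ 3.5.3]{CMwSH}, which I would establish as follows. As \(\tld{\ph}\) is torsion-free, \(\Hol(g_{\tld{\ph}}) \cc \Gg_2\), so \(\tld{\ph}\), \(\tld{\ps} = \Hs_{\tld{\ph}}\tld{\ph}\) and \(g_{\tld{\ph}}\) are all parallel for the \LC\ connection \(\nabla\). Because the projections \(\pi_q\) are assembled algebraically and \(\Gg_2\)-equivariantly from these tensors (cf.\ \eref{G2TDEx}), they are themselves \(\nabla\)-parallel bundle endomorphisms, and the subbundles \(\ww[q]{p}\T^*\tld{U}\) are \(\nabla\)-parallel. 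I then invoke the Bochner--Weitzenb\"ock formula \(\De = \nabla^*\nabla + \mc{W}\), where \(\mc{W}\) is the Weitzenb\"ock curvature endomorphism. On one hand, \(\nabla^*\nabla\) commutes with every parallel endomorphism, in particular with \(\pi_q\). On the other hand, writing \(\mc{W} = \sum_a \rho(\om_a)\,\rho(R\om_a)\) for an orthonormal basis \(\{\om_a\}\) of \(\ww{2}(\bb{R}^7)^* \cong \fr{so}(7)\), where \(\rho\) is the induced \(\fr{so}(7)\)-action on \(\ww{p}(\bb{R}^7)^*\) and \(R:\ww{2}\to\ww{2}\) is the symmetric curvature operator, the holonomy reduction forces \(\Im(R) \cc \fr{g}_2\) and hence (by symmetry of \(R\)) \(R|_{\fr{g}_2^\perp} = 0\). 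Choosing the basis adapted to \(\fr{so}(7) = \fr{g}_2 \ds \fr{g}_2^\perp\), every surviving summand of \(\mc{W}\) is a composite of two \(\fr{g}_2\)-actions on \(\ww{p}(\bb{R}^7)^*\); since the type-decomposition is \(\Gg_2\)-invariant by construction, \(\fr{g}_2\) preserves each \(\ww[q]{p}(\bb{R}^7)^*\), so \(\mc{W}\), and therefore \(\De\), commutes with \(\pi_q\).

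To return to the orbifold, I observe that on \(\tld{U}\) both \(\De\) and \(\pi_q\) are \(\Ga\)-equivariant and restrict, under the identification of forms on \(U\) with \(\Ga\)-invariant forms on \(\tld{U}\), to the corresponding operators on \(\M\); hence \(\De\pi_q = \pi_q\De\) descends, and, being local, patches to the global identity on \(\M\). The main obstacle is the curvature-term analysis of the second paragraph: one must know the algebraic shape of \(\mc{W}\) precisely enough to see that it is built entirely from the holonomy-valued curvature \(R\), so that the reduction \(\Im(R) \cc \fr{g}_2\) can be exploited. The parallel-endomorphism step, the refinement by simultaneous diagonalisation, and the passage from the manifold to the orbifold setting are then all routine --- consistent with the excerpt's remark that the result is ``easily seen to also hold on orbifolds''.
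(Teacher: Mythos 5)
Your proof is correct and, in substance, follows the paper's route, such as it is: the paper offers no argument of its own, citing \cite[Thm.\ 3.5.3]{CMwSH} for the manifold identity and asserting that the orbifold case is ``easily seen'' to follow. Your chart-wise, \(\Ga\)-equivariant reduction is exactly that routine extension, and your Weitzenb\"ock argument for the manifold case --- \(\nabla^*\nabla\) commutes with the parallel projections \(\pi_q\), while the curvature term only acts through \(\fr{g}_2\) because \(\Im(R)\cc\fr{g}_2\) (Ambrose--Singer) and the symmetry of \(R\) force \(R|_{\fr{g}_2^\perp}=0\) --- is essentially the proof of the cited theorem itself.
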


The relationship between Hodge Theory and type-decomposition can be made yet more explicit since, as in K\"{a}hler geometry, one can decompose the exterior derivative according to type. Indeed, writing \(\ps = \Hs_\ph\ph\), define the following `refined' exterior differential operators:
\ew
\dd^1_7:\Om^0(\M)&\to\Om^1(\M) & \dd^7_7:\Om^1(\M)&\to\Om^1(\M) & \dd^7_{14}:\Om^1(\M)&\to\Om^2_{14}(\M)\\
f &\mt \dd f                                         & \al &\mt \Hs_\ph\dd(\al\w\ps)               & \al &\mt \pi_{14}(\dd\al)\\
\\
\dd^7_{27}:\Om^1(\M)&\to\Om^3_{27}(\M) & \dd^{14}_{27}:\Om^2_{14}(\M)&\to\Om^3_{27}(\M) & \dd^{27}_{27}:\Om^3_{27}(\M)&\to\Om^3_{27}(\M)\\
\al &\mt \pi_{27}\dd\Hs_\ph(\al\w\ps)                & \be &\mt \pi_{27}(\dd\be)                                             & \ga &\mt \Hs_\ph\pi_{27}(\dd\be).\\
\eew
Note that \(\dd^7_7\) and \(\dd^{27}_{27}\) are both formally \(L^2\)-self-adjoint. Analogously, define \(\dd^7_1=(\dd^1_7)^*\), \(\dd^{14}_7 = (\dd^7_{14})^*\), \(\dd^{27}_7=(\dd^7_{27})^*\) and \(\dd^{27}_{14}=(\dd^{14}_{27})^*\).  The following result, stated in \cite[\S5]{RoG2S} for manifolds, is easily seen to also hold in the orbifold setting described above:
\begin{Thm}\label{G2KI}
Let \(\M\) be a closed, oriented 7-orbifold and \(\ph\) a torsion-free \g\ 3-form on \(\M\).  Then all exterior and co-exterior derivatives on \(\M\) can be expressed purely in terms of the operators \(d^1_7\), \(\dd^7_1\), \(\dd^7_7\), \(\dd^7_{14}\), \(\dd^{14}_7\), \(\dd^7_{27}\), \(\dd^{27}_7\), \(\dd^{14}_{27}\), \(\dd^{27}_{14}\) and \(\dd^{27}_{27}\). In particular, \(\De\) can be expressed in terms of the same operators.
\end{Thm}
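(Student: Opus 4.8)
The plan is to reduce the statement to the fact that torsion-freeness makes the \g-structure parallel, and then to a finite, purely representation-theoretic computation. Since $\ph$ is torsion-free, the metric $g_\ph$ has holonomy contained in \g, so the \LC\ connection $\nabla$ satisfies $\nabla\ph = 0$ and $\nabla\ps = 0$; in particular $\Hs_\ph$ is parallel and every type-projection $\pi_q$ commutes with $\nabla$. Working in a local orthonormal coframe $(\th^i)_{i=1}^{7}$ with dual frame $(e_i)_{i=1}^{7}$, I would write $\dd = \sum_{i=1}^{7}\th^i\w\nabla_{e_i}$. For a $p$-form $\si$ of pure type $q$, each $\nabla_{e_i}\si$ is again of type $q$, so $\dd\si$ lies in the image of the \g-equivariant wedge map $\bb{R}^7\ts\ww[q]{p}(\bb{R}^7)^*\to\ww{p+1}(\bb{R}^7)^*$. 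Hence every component $\pi_{q'}\circ\dd|_{\Om^p_q}$ is a \g-equivariant first-order operator; this is a local, $\Ga$-invariant statement, so it descends to each orbifold chart of \sref{Orb} unchanged.

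The key step is to pin down, via Schur's lemma, exactly which of these components can be nonzero and to match each with a listed operator. A \g-equivariant first-order operator $\Om^p_q\to\Om^{p+1}_{q'}$ can be nonzero only if the simple module $\ww[q']{p+1}(\bb{R}^7)^*$ occurs in $\bb{R}^7\ts\ww[q]{p}(\bb{R}^7)^*$. The relevant tensor products are all multiplicity-free; for instance $\bb{R}^7\ts\bb{R}^7\cong\bb{R}\ds\bb{R}^7\ds\fr{g}_2\ds\ss[0]{2}(\bb{R}^7)^*$, and the decompositions of $\bb{R}^7\ts\fr{g}_2$ and $\bb{R}^7\ts\ss[0]{2}(\bb{R}^7)^*$ likewise contain each simple summand at most once. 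Running through $p=0,1,2,3$, this shows that the only admissible type-changes are precisely $1\!\to\!7$, $7\!\to\!1$, $7\!\to\!7$, $7\!\to\!14$, $14\!\to\!7$, $7\!\to\!27$, $27\!\to\!7$, $14\!\to\!27$, $27\!\to\!14$ and $27\!\to\!27$, with operators such as $14\!\to\!14$ or $1\!\to\!14$ excluded by the Hom-count. By multiplicity-freeness each surviving component is a single scalar multiple of a canonical equivariant operator, and I would identify these with the ten operators $\dd^1_7,\dd^7_1,\dd^7_7,\dd^7_{14},\dd^{14}_7,\dd^7_{27},\dd^{27}_7,\dd^{14}_{27},\dd^{27}_{14},\dd^{27}_{27}$ defined above, composed where necessary with the parallel isomorphisms $\Hs_\ph$, $v\mt v\hk\ph$ and $\al\mt\al\w\ps$.

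It remains to cover the complementary degrees and the co-exterior derivatives. Since $\Hs_\ph$ is parallel and intertwines the type-decompositions, the relation $\dd^* = \pm\,\Hs_\ph\,\dd\,\Hs_\ph$ yields $\dd$ on $\Om^4,\dots,\Om^6$ and every $\dd^*$ from the degree-$\le 3$ computation, expressed through the same ten operators; indeed the starred operators $\dd^7_1$, $\dd^{14}_7$, $\dd^{27}_7$, $\dd^{27}_{14}$ are by definition the $L^2$-adjoints of the un-starred ones, while $\dd^7_7$ and $\dd^{27}_{27}$ are self-adjoint. Finally $\De = \dd\dd^* + \dd^*\dd$ is a composition of operators already on the list and is therefore expressible in terms of them.

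The main obstacle will be the central representation-theoretic bookkeeping, together with keeping the numerous canonical identifications mutually consistent across degrees. The delicate points are (i) verifying that each relevant tensor product is genuinely multiplicity-free, so that Schur's lemma fixes every component up to one scalar; (ii) confirming that the admissible type-changes are exactly the ten above and that no extra independent first-order operator appears; and (iii) checking that, once $\Hs_\ph$, $v\mt v\hk\ph$ and $\al\mt\al\w\ps$ are fixed, the resulting scalars reproduce the normalizations in the definitions of the $\dd^a_b$ and respect the self-adjointness of $\dd^7_7$ and $\dd^{27}_{27}$. As every ingredient is local, $\Ga$-equivariant and compatible with the chart embeddings of \sref{Orb}, the passage from the manifold result of \cite{RoG2S} to the orbifold setting needs no further argument.
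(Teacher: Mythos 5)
Your proposal is sound, but note that the paper itself offers no proof of \tref{G2KI}: it simply cites \cite[\S5]{RoG2S} for the manifold statement and disposes of the orbifold case with exactly the locality-plus-equivariance remark that closes your proposal. What you have sketched is therefore a self-contained reconstruction of the cited result rather than of anything argued in the paper, and it would work. The representation-theoretic facts you flag as the delicate points are true: as \g-modules, \(\bb{R}^7 \ts \bb{R}^7 \cong \mathbf{1} \ds \mathbf{7} \ds \mathbf{14} \ds \mathbf{27}\), \(\bb{R}^7 \ts \fr{g}_2 \cong \mathbf{7} \ds \mathbf{27} \ds \mathbf{64}\) and \(\bb{R}^7 \ts \ss[0]{2}\lt(\bb{R}^7\rt)^* \cong \mathbf{7} \ds \mathbf{14} \ds \mathbf{27} \ds \mathbf{64} \ds \mathbf{77}\), all multiplicity-free, and every summand is absolutely irreducible over \(\bb{R}\), so Schur's lemma does pin each component of \(\dd\) (and of each defined operator \(\dd^a_b\), all of which are of the form ``equivariant symbol composed with \(\nabla\)'' by parallelism of \(\ph\), \(\ps\) and \(\Hs_\ph\)) down to a universal scalar multiple of a single canonical map, computable on the flat model \(\bb{R}^7\). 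Two caveats. First, a bookkeeping slip: the type-change \(27 \to 14\) does not occur for \(\dd\) on \(\Om^p(\M)\) with \(p \le 3\), since \(\ww{3}\lt(\bb{R}^7\rt)^*\) and \(\ww{4}\lt(\bb{R}^7\rt)^*\) contain no type-\(14\) summand; it first appears for \(\dd\) acting on \(\Om^4_{27}(\M)\), equivalently as the adjoint of \(14 \to 27\), so your list of ten admissible type-changes is correct only after the dualisation step you carry out afterwards, not as a consequence of running through \(p = 0,\dots,3\) alone. Second, your argument establishes the qualitative statement, which is all that \tref{G2KI} claims; but the paper's later arguments (\pref{CH-Prop}, \pref{CCH-Prop}) rely on the precise numerical constants recorded in \aref{G2-Kahler-Id}, e.g.\ \eref{d2=0} and \eref{27-De}, and those normalisations are exactly the flat-model computations your plan defers, so they would still need to be carried out to make the theorem usable downstream.
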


\noindent The explicit formulae are presented in \aref{G2-Kahler-Id}.  They will be needed for calculations in \srefs{THoCHaCP&tmu3I} \& \ref{THoCCHaCP&tmu4I}.

Now recall the functionals \(\CH\) and \(\CCH\) described in the introduction.  Firstly, observe that the subset \([\ph]_+ \pc [\ph]\) is open in the \(C^0\)-topology, a fact which follows from the compactness of \(\M\) and a property of \g\ 3-forms termed stability by Hitchin \cite{SF&SM}.  Explicitly, it can be shown that the \(\GL_+(7;\bb{R})\)-orbit of \(\vph_0\) in \(\ww{3}\lt(\bb{R}^7\rt)^*\) is open and thus that all sufficiently small perturbations of a \g\ 3-form are also of \g-type.  As a consequence, one can identify \(\T_{\ph'}[\ph]_+ \cong \dd\Om^2(\M)\).  Using type decomposition, one can explicitly compute the functional derivatives of \(\CH\); this was accomplished by Hitchin in \cite[Thm.\ 19 \& Lem.\ 20]{TGo3Fi6&7D}.\footnote{Note that the formulae for \(\mc{D}\CH|_\ph\) and \(\mc{D}^2\CH|_\ph\) differ from those in \cite{TGo3Fi6&7D}, as the author of this thesis has discovered an error in the numerical factor of \(\frac{7}{18}\) used {\it op.\ cit.}, which has been corrected to \(\frac{1}{3}\) in the formulae here presented.}  Analogous arguments can be used to compute the first and second derivatives of \(\CCH\); see \cite[Thm.\ 1]{SF&SM} and also \cite[Prop.\ 3.3 \& 3.4]{STBoaMLCoG2S}.
\begin{Prop}\label{Hitchin-derivs}
The first and second derivatives of \(\CH\) and \(\CCH\) are given by.
\ew
\bcd[row sep = -2mm, column sep = 5mm]
\mc{D}\CH|_\ph:\dd\Om^2(\M) \ar[r] & \bb{R} & & \mc{D}^2\CH|_\ph:\dd\Om^2(\M) \x \dd\Om^2(\M) \ar[r] & \bb{R}\\
\si \ar[r, maps to] & \frac{1}{3}\bigint_\M \si \w \Hs_\ph \ph & & (\si_1,\si_2) \ar[r, maps to] & \frac{1}{3}\bigint_\M \si_1 \w \Hs_\ph I_\ph(\si_2)
\ecd
\eew
and:
\ew
\bcd[row sep = -2mm, column sep = 5mm]
\mc{D}\CCH|_\ps:\dd\Om^3(\M) \ar[r] & \bb{R} & & \mc{D}^2\CCH|_\ps:\dd\Om^3(\M) \x \dd\Om^3(\M) \ar[r] & \bb{R}\\
\vpi \ar[r, maps to] & \frac{1}{4}\bigint_\M \vpi \w \Hs_\ps\ps & & (\vpi_1\vpi_2) \ar[r, maps to] & \frac{1}{4}\bigint_\M \vpi_1 \w \Hs_\ps J_\ps(\vpi_2)
\ecd
\eew
where:
\e\label{IJ}
I_\ph(\si) = \frac{4}{3}\pi_1(\si) + \pi_7(\si) - \pi_{27}(\si) \et J_\ps(\si) = \frac{3}{4}\pi_1(\si) + \pi_7(\si) - \pi_{27}(\si).
\ee
(Here, the projections \(\pi_\pt\) are defined \wrt\ \(\ph\) and \(\ps\) respectively.)  In particular, for both functionals, the critical points correspond to torsion-free \g\ 3- and 4-forms, as appropriate.\\
\end{Prop}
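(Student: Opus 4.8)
The plan is to reduce the entire statement to two pointwise, \(\Gg_2\)-equivariant facts about how \(vol_\ph\) and \(\ps = \Hs_\ph\ph\) depend on \(\ph\), and then to integrate, using the identifications \(\T_{\ph'}[\ph]_+ \cong \dd\Om^2(\M)\) and \(\T_{\ps'}[\ps]_+ \cong \dd\Om^3(\M)\) established above. I work in a single chart, lift to \(\bb{R}^7\), and use that both constructions \(\ph \mapsto vol_\ph\) and \(\ph \mapsto \ps\) are \(\GL_+(7;\bb{R})\)-natural, so their linearisations at \(\vph_0\) are \(\Gg_2\)-equivariant. Since, by \eref{G2TD}, no simple \(\Gg_2\)-module occurs twice in either \(\ww{3}(\bb{R}^7)^*\) or \(\ww{4}(\bb{R}^7)^*\), Schur's lemma forces each such linearisation to act by a scalar on every type component.

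\textbf{First derivatives.} First I would show \(\mc{D}(vol)|_\ph(\si) = \tfrac13\,\si \w \ps\) for \(\si \in \ww{3}\). The linearisation \(\mc{D}(vol)|_\ph : \ww{3} \to \ww{7}\) lands in the trivial module \(\ww{7}\), so by Schur it annihilates \(\ww[7]{3}\) and \(\ww[27]{3}\) and is a scalar on \(\ww[1]{3} = \bb{R}\1\ph\); writing \(\pi_1\si = f\ph\), the degree-\(\tfrac73\) homogeneity \(vol_{\ell^3\ph} = \ell^7 vol_\ph\) gives, via Euler, \(\mc{D}(vol)|_\ph(\si) = \tfrac73 f\, vol_\ph\). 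On the right-hand side \(\pi_{27}\si \w \ps = 0\) by the definition \eref{G2TDEx} of \(\ww[27]{3}\), while \(\pi_7\si \w \ps = 0\) follows by expanding \(0 = v\hk(\ps\w\ps)\) for \(\pi_7\si = v\hk\ps\); hence \(\tfrac13\,\si\w\ps = \tfrac13 f\,\ph\w\ps = \tfrac73 f\, vol_\ph\), matching. Integrating yields \(\mc{D}\CH|_\ph(\si) = \tfrac13\int_\M \si \w \ps\). The \(\CCH\) case is identical, with \(vol\) now homogeneous of degree \(\tfrac74\) in \(\ps\) and the wedge-vanishing replaced by the corresponding orthogonality statements (using \(\al \w \Hs\be = \<\al,\be\? vol\)), giving \(\mc{D}\CCH|_\ps(\vpi) = \tfrac14\int_\M \vpi\w\ph\).

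\textbf{Second derivatives.} Differentiating \(\mc{D}\CH|_\ph(\si_1) = \tfrac13\int_\M \si_1 \w \ps\) in a direction \(\si_2\) (holding \(\si_1\) fixed) gives \(\mc{D}^2\CH|_\ph(\si_1,\si_2) = \tfrac13\int_\M \si_1 \w Q_\ph(\si_2)\), where \(Q_\ph\) is the linearisation of \(\ph \mapsto \ps\). By equivariance and Schur, \(Q_\ph = \Hs_\ph \circ (c_1\pi_1 + c_7\pi_7 + c_{27}\pi_{27})\) for scalars \(c_d\), since \(\Hs_\ph\) identifies \(\ww[d]{3} \cong \ww[d]{4}\). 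To find the \(c_d\) I would decompose the complement of \(\fr{g}_2\) in \(\fr{gl}(7;\bb{R})\) as \(\bb{R} \ds \ww[7]{2} \ds \ss[0]{2}\) and let \(A\) in each summand act on \(\vph_0\); naturality along the \(\GL_+\)-orbit then gives \(Q_\ph(A\1\vph_0) = A\1\ps_0\) (the infinitesimal action on forms). For \(A \in \bb{R}\) this recovers \(c_1 = \tfrac43\) from the degree-\(\tfrac43\) homogeneity of \(\ph\mapsto\ps\); for \(A \in \ww[7]{2} \pc \fr{so}(7)\), \(A\) preserves \(g_{\vph_0}\) and hence commutes with \(\Hs\), so \(A\1\ps_0 = \Hs(A\1\vph_0)\) and \(c_7 = 1\).

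\textbf{Main obstacle and conclusion.} The one genuinely computational point is \(c_{27} = -1\): for \(A \in \ss[0]{2}\) symmetric, \(A\) does \emph{not} commute with \(\Hs\), and one must track the variation of the Hodge star under the metric deformation \(g \mapsto g + 2sA\) on a single traceless-symmetric example (Schur then makes the resulting scalar universal); I expect this sign to be the crux of the argument. Assembling \(c_1 = \tfrac43,\ c_7 = 1,\ c_{27} = -1\) gives \(Q_\ph = \Hs_\ph \circ I_\ph\), hence the stated formula for \(\mc{D}^2\CH\). The \(\CCH\) formula follows by the identical scheme applied to \(\ps \mapsto \ph = \Hs_\ps\ps\): the scaling coefficient becomes \(\tfrac34\) (degree-\(\tfrac34\) homogeneity) while the type-\(7\) and type-\(27\) coefficients remain \(+1\) and \(-1\) by the same commutation argument, yielding \(\Hs_\ps \circ J_\ps\); equivalently, this case is dual to the first under \(\Hs\). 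Finally, the characterisation of critical points is immediate: \(\mc{D}\CH|_{\ph'} = 0\) means \(\int_\M \dd\eta \w \ps' = 0\) for all \(\eta \in \Om^2(\M)\), whence \(\int_\M \eta \w \dd\ps' = 0\) for all \(\eta\) by integration by parts, forcing \(\dd\ps' = 0\); with \(\dd\ph' = 0\) this is torsion-freeness, and the \(\CCH\) case is analogous.
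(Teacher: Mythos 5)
The paper itself offers no proof of this proposition: it is imported from the literature (Hitchin \cite{TGo3Fi6&7D}, Thm.~19 and Lem.~20, with the erroneous constant \(\tfrac{7}{18}\) corrected to \(\tfrac13\), and, for \(\CCH\), \cite{SF&SM} and \cite{STBoaMLCoG2S}), so your attempt can only be judged on its own terms. Your framework is sound and is essentially the standard route: naturality of \(\ph\mt vol_\ph\) and of \(\Theta:\ph\mt\Hs_\ph\ph\) under \(\GL_+(7;\bb{R})\), plus Schur's lemma applied to the multiplicity-free decompositions \eref{G2TD}. The first-derivative computations (Schur killing the type-\(7\) and type-\(27\) components, Euler's relation on the type-\(1\) component, and the wedge-vanishing identities), the reduction of the Hessian to the linearisation \(Q_\ph = \mc{D}\Theta|_\ph = \Hs_\ph\circ\lt(c_1\pi_1 + c_7\pi_7 + c_{27}\pi_{27}\rt)\), the values \(c_1 = \tfrac43\) (homogeneity) and \(c_7 = 1\) (infinitesimal isometries commute with \(\Hs_\ph\)), and the critical-point characterisation via Stokes are all correct.

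There is, however, one genuine gap, and it sits exactly where you place it: you assert \(c_{27} = -1\) but never establish it — \emph{I expect this sign to be the crux} is not a proof. This coefficient is the only non-soft input in the entire proposition (everything else is homogeneity, equivariance and Stokes), and the minus sign on \(\pi_{27}\) is precisely what makes the Hessians indefinite, which is what the rest of the paper's spectral Morse theory is built on; a proof omitting it has not established the formulae for \(I_\ph\) or \(J_\ps\). Fortunately the gap closes inside your own setup, and more easily than your sketch of tracking the variation of \(\Hs\) under \(g\mt g+2sA\) suggests: your naturality identity \(Q_{\vph_0}(L_A\vph_0) = L_A\ps_0\), where \(L_A = \lt.\tfrac{\dd}{\dd t}\rt|_{t=0}\exp(tA)^*\), needs no Hodge-star variation formula at all. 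Take \(A = \diag(a_1,\dots,a_7)\) with \(\textstyle\sum_i a_i = 0\); such \(A\) lies in \(\ss[0]{2}\), so \(L_A\vph_0 \in \ww[27]{3}\lt(\bb{R}^7\rt)^*\) by your own Schur argument, and \(L_A\vph_0 \ne 0\) since diagonal matrices meet \(\fr{g}_2 \pc \fr{so}(7)\) trivially. Each signed monomial \(\epsilon_r\th^{ijk}\) of \(\vph_0\) is scaled by \(a_i+a_j+a_k\), each signed monomial of \(\ps_0\) by the complementary sum, which equals \(-(a_i+a_j+a_k)\) by tracelessness, and \(\Hs_{\vph_0}\) carries the signed monomials of \(\vph_0\) exactly to those of \(\ps_0\); comparing termwise gives \(L_A\ps_0 = -\Hs_{\vph_0}L_A\vph_0\), i.e.\ \(c_{27} = -1\), and Schur upgrades this single example to all of \(\ww[27]{3}\). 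With that inserted your argument is complete, and the \(\CCH\) case does follow from your duality remark, since \(\Xi:\ps\mt\Hs_\ps\ps\) is inverse to \(\Theta\), so \(\mc{D}\Xi|_\ps = \lt(\mc{D}\Theta|_\ph\rt)^{-1} = \lt(\Hs_\ph\circ I_\ph\rt)^{-1} = \Hs_\ps\circ J_\ps\).
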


\section{A spectral generalisation of Morse indices to infinite dimensions}\label{SMI}

Recall the following classical definition \cite[\S2]{MT}:

\begin{Defn}
Let \(b \in \ss{2}\bb{A}^*\) be a symmetric bilinear form on a finite-dimensional real vector space \(\bb{A}\).  The index of \(b\) is the dimension of any maximal subspace \(\bb{B} \pc \bb{A}\) such that \(b|_\bb{B}\) is negative definite.  Equivalently, using a choice of inner product on \(\bb{A}\), one may regard \(b\) as a self-adjoint linear map \(b^{\mla{\sharp}}:\bb{A} \to \bb{A}\); then the index of \(b\) is simply the number of negative eigenvalues of \(b^{\mla{\sharp}}\).

Now let \(\N\) be a finite-dimensional manifold, let \(f:\N\to\bb{R}\) be a Morse function (i.e.\ a function with only non-degenerate critical points) and let \(p\in\N\) be a critical point of \(f\).  The Morse index of \(f\) at \(p\) is the index of the symmetric bilinear form \(D^2f|_p \in \ss{2}\T^*_p\N\).
\end{Defn}

In this section, I use the results of \cite{TSTfVM, OeI} (see also \cite{CPoaEO, APSI, APSII, APSIII}) to propose an extension of this definition to infinite dimensions, resulting in the notion of spectral Morse indices.

Let \((\N,h)\) be a closed, oriented, Riemannian orbifold of odd dimension \(n\) equipped with a real orbifold vector bundle \(E\) with metric \(h^E\), and let \(A\) be a smooth, elliptic, real, self-adjoint pseudodifferential operator of positive order \(m\) acting on sections of \(E\) (see \cite[Defn.\ 1.2]{OeI} for the definition of pseudodifferential operators on orbifolds).  Then \(A\) defines a densely-defined, closed, self-adjoint linear operator on \(L^2(\N,E)\), where the \(L^2\)-norm is defined using the metrics \(h\), \(h^E\).  Define the spectral \(\ze\)- and \(\eta\)- functions of \(A\) to be the partial functions:
\ew
\bcd[row sep = 0pt]
\ze_A: \bb{C} \ar[r, harpoon] & \bb{C} & \eta_A : \bb{C} \ar[r, harpoon] & \bb{C}\\
s \ar[r, maps to] &  \sum_{\la\in\Spec(A)\osr\{0\}} |\la|^{-s} & s \ar[r, maps to] & \sum_{\la\in\Spec(A)\osr\{0\}}\sgn{\la}|\la|^{-s},
\ecd
\eew
defined wherever the sums converge absolutely and locally uniformly.  It follows from \cite{TSTfVM,OeI} that:
\begin{Thm}\label{S-APS}
For \(\N\), \(h\), \(E\), \(h^E\) and \(A\) as above, the spectral \(\ze\)- and \(\eta\)-functions \(\ze_A\) and \(\eta_A\) converge absolutely and locally uniformly on the region:
\ew
\lt\{ s \in \bb{C} ~\m|~ \cal{R}e(s)>\frac{n}{m}\rt\}
\eew
and admit meromorphic continuations to all of \(\bb{C}\) which are holomorphic on a neighbourhood 0; let \(\ze(A)\) and \(\eta(A)\) denote their respective values at \(0\).  Then \(\ze(A), \eta(A) \in \bb{R}\), and for any \(\ell > 0\):
\ew
\ze(\ell A) = \ze(A) \et \eta(\ell A) = \eta(A).
\eew
Moreover, the maps:
\ew
\bcd[row sep = 0pt]
\ze: \Ps^{>0}_{\text{inv-sa}}(\N;E) \ar[r] & \bb{R} & \eta:\Ps^{>0}_{\text{inv-sa}}(\N;E) \ar[r] &\bb{R}\\
A \ar[r, maps to] & \ze_A(0) & A \ar[r, maps to]& \eta_A(0)
\ecd
\eew
are smooth, where \(\Ps^{>0}_{\text{inv-sa}}(\N;E)\) denotes the space of (smooth) invertible, real, self-adjoint pseudodifferential operators of positive order acting on \(E\).
\end{Thm}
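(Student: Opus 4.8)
The plan is to realise $\ze_A$ and $\eta_A$ as Mellin transforms of heat traces and to feed in the orbifold Seeley/heat calculus of \cite{TSTfVM, OeI}, which supplies the small-time asymptotic expansions underlying both the continuation and the regularity at $0$. Write $|A| = (A^2)^{1/2}$, a positive elliptic \pd\ operator of order $m$ on $(\Ker A)^\perp$, so that, after discarding the finite-dimensional kernel, $\ze_A(s) = \Tr\bigl(|A|^{-s}\bigr)$ and $\eta_A(s) = \Tr\bigl(\sgn(A)\,|A|^{-s}\bigr) = \Tr\bigl(A\,|A|^{-s-1}\bigr)$. The relevant Mellin representations are
\ew
\Ga(s)\,\ze_A(s) &= \int_0^\0 t^{s-1}\bigl(\Tr(e^{-t|A|}) - \dim\Ker A\bigr)\,\dd t,\\
\Ga\bigl(\tfrac{s+1}{2}\bigr)\,\eta_A(s) &= \int_0^\0 t^{(s-1)/2}\,\Tr\bigl(A\,e^{-tA^2}\bigr)\,\dd t.
\eew

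For convergence, the orbifold heat calculus gives $\Tr(e^{-t|A|}) \sim \sum_{j \ge 0} c_j\,t^{(j-n)/m}$ as $t \to 0^+$, with $c_0 > 0$; the leading term encodes Weyl's law $|\la_k| \sim c\,k^{m/n}$, from which $\sum_\la |\la|^{-s}$ converges absolutely and locally uniformly exactly on $\{\cal{R}e(s) > n/m\}$. Here the orbifold references are essential, since the expansion now acquires additional Kawasaki-type contributions from the singular strata of $\N$; these are of strictly lower order in $t$ and leave the leading asymptotics untouched. Splitting $\int_0^\0 = \int_0^1 + \int_1^\0$ in the formulae above, the tail integrals are entire (exponential decay off the kernel) while substituting the small-$t$ expansions into $\int_0^1$ produces explicit meromorphic functions; this yields the meromorphic continuations, with the poles of $\ze_A$ confined to $s = (n-j)/m$, $j \ge 0$. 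Holomorphy at $0$ then has two faces. For $\ze_A$ it is automatic: the integral has at most a simple pole at $s = 0$, which is annihilated by the simple zero of $1/\Ga(s)$, leaving $\ze_A(0) = c_n - \dim\Ker A$. For $\eta_A$ the factor $\Ga\bigl(\tfrac{s+1}{2}\bigr)$ equals $\sqrt{\pi} \ne 0$ at $s = 0$ and cancels nothing, so holomorphy forces the coefficient of $t^{-1/2}$ in the small-$t$ expansion of $\Tr\bigl(A\,e^{-tA^2}\bigr)$ — a local residue density integrated over $\N$, together with its strata corrections — to vanish. This is the orbifold form of the Atiyah--Patodi--Singer/Gilkey regularity theorem, which I would quote from \cite{OeI}; the odd dimension of $\N$ is precisely what forces the residue to vanish. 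I expect this to be the main obstacle, since controlling the singular-stratum contribution to the residue is where the orbifold setting genuinely departs from the classical one.

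Reality and scaling are then formal. Self-adjointness makes $\Spec(A) \pc \bb{R}$, so both functions are real-valued on the ray $(n/m, \0)$; the Schwarz reflection principle propagates this to $\ol{\ze_A(\ol{s})} = \ze_A(s)$ and $\ol{\eta_A(\ol{s})} = \eta_A(s)$, whence their values at the real regular point $s = 0$ are real. Since $\Spec(\ell A) = \ell\,\Spec(A)$ and $\sgn(\ell\la) = \sgn(\la)$ for $\ell > 0$, one has $\ze_{\ell A}(s) = \ell^{-s}\ze_A(s)$ and $\eta_{\ell A}(s) = \ell^{-s}\eta_A(s)$ throughout $\bb{C}$ by continuation; evaluating at $s = 0$ kills the factor $\ell^{-s}$ and gives $\ze(\ell A) = \ze(A)$ and $\eta(\ell A) = \eta(A)$.

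Finally, for smoothness I restrict to the invertible locus $\Ps^{>0}_{\text{inv-sa}}(\N; E)$, where $\Ker A = 0$. Then $\ze_A(0) = c_n(A)$ is the integral over $\N$ of a universal local invariant built polynomially from the jets of the symbol of $A$, hence visibly smooth in $A$. For $\eta$, invertibility keeps the spectrum away from $0$, so there is no spectral flow and $\eta_A(0)$ is continuous; differentiating the Mellin representation along a smooth family $A_u$ gives the Atiyah--Patodi--Singer variation formula, which expresses $\tfrac{\dd}{\dd u}\eta_{A_u}(0)$ as a finite local quantity (invertibility removing the potential pole) that is smooth in $u$. This upgrades continuity to Fr\'echet smoothness of $\eta: \Ps^{>0}_{\text{inv-sa}}(\N; E) \to \bb{R}$, completing the plan.
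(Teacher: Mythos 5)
Your proposal is correct and takes essentially the same route as the paper: the paper offers no independent argument, deducing the theorem directly from the orbifold heat--kernel/Seeley calculus of Kawasaki \cite{TSTfVM} and the orbifold \(\eta\)-regularity theorem of Farsi \cite{OeI}, which are exactly the two external inputs your Mellin-transform argument invokes at its critical points (the small-time expansion with singular-stratum corrections, and holomorphy of \(\eta_A\) at \(0\)). The only discrepancy is notational: you work with the convention \(\ze_A(s)=\sum_{\la}|\la|^{-s}\), which silently corrects the exponent sign in the paper's displayed definition (as literally written, \(\sum_{\la}|\la|^{s}\) would converge for \(\mathrm{Re}(s)<-\tfrac{n}{m}\), not \(\mathrm{Re}(s)>\tfrac{n}{m}\)).
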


Using \tref{S-APS}, I make the following definition:
\begin{Defn}\label{Spec-Morse}
Let \(\N\), \(h\), \(E\), \(h^E\) and \(A\) be as above.  I define the spectral Morse index of \(A\) to be:
\ew
\SI(A) = \frac{\ze(A) - \eta(A)}{2}.
\eew
Then \(\SI(A)\) is real and invariant under rescalings \(A \mt \ell A\) for \(\ell > 0\).  Moreover, \(\SI\) defines a smooth map:
\ew
\SI:\Ps^{>0}_{\text{inv-sa}}(\N;E)&\to\bb{R}.
\eew
\end{Defn}

The motivation for \dref{Spec-Morse} can be understood as follows: define the spectral Morse function of \(A\) to be:
\ew
\bcd[row sep = 0pt]
\mu_A: \lt\{s \in \bb{C} ~\m|~ \fr{Re}s > \frac{n}{m} \rt\} \ar[r] &\bb{C}\\
s \ar[r, maps to] & \sum_{\substack{\la\in\Spec(A)\\\la<0}}|\la|^{-s}.
\ecd
\eew
Then, by \tref{S-APS}, \(\mu_A\) admits an analytic continuation to all of \(\bb{C}\) and \(\mu_A(0) = \SI(A)\).  If \(A\) has only finitely many negative eigenvalues, then the sum defining \(\mu_A\) converges on all of \(\bb{C}\) and \(\mu_A(0)\) is simply the number of negative eigenvalues of \(A\).  Thus in general, one should think of \(\SI(A)\) as a regularised measure of the `number of negative eigenvalues of \(A\)'.\\

\section{\(\mu_3\): Morse indices of the critical points of \(\CH\)}\label{THoCHaCP&tmu3I}

The aim of this section is to prove that the critical points of the functional \(\CH\) have well-defined spectral Morse indices.  Let \(\M\) be a closed, oriented 7-orbifold and let \(\ph\) be a torsion-free \g\ 3-form on \(\M\).  Since \(\CH\) is diffeomorphism invariant, it induces a functional \(\CH': \rqt{[\ph]_+}{\Diff_0(\M)} \to (0,\infty)\).  The following result generalises \cite[Thm.\ 19 and Prop.\ 21]{TGo3Fi6&7D} to the case of orbifolds, as well as rephrasing the argument {\it op.\ cit.}\ to obtain an explicit expression for \(\mc{D}\CH'\) transverse to the action of diffeomorphisms:
\begin{Prop}\label{CH-Prop}
The tangent space \(\T_\ph\lt(\rqt{[\ph]_+}{\Diff_0(\M)}\rt)\) can formally be identified with the space:
\ew
\dd^*\Om^3(\M)\cap\Om^2_{14}(\M).
\eew
Moreover, using the natural \(L^2\) inner product on \(\dd^*\Om^3(\M)\cap\Om^2_{14}(\M)\) induced by \(\ph\), the Hessian \(\mc{D}^2\CH'|_\ph\) can formally be identified with the invertible, linear map:
\ew
\frac{1}{3}\mc{E}(\ph) = -\frac{1}{3}\dd^*\dd:\dd^*\Om^3(\M)\cap\Om^2_{14}(\M) \to \dd^*\Om^3(\M)\cap\Om^2_{14}(\M),
\eew
where \(\mc{E}(\ph)\) denotes the restriction of the operator \(-\dd^*\dd\) to the domain \(\dd^*\Om^3(\M)\cap\Om^2_{14}(\M)\).  In particular, the critical points of \(\CH'\) are non-degenerate.
\end{Prop}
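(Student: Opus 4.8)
The plan is to replace the unreduced tangent space by an explicit slice for the diffeomorphism action and then evaluate the known Hessian formula on that slice. Recall that $\T_\ph[\ph]_+ \cong \dd\Om^2(\M)$. By Cartan's formula and $\dd\ph = 0$, the infinitesimal action of $\Diff_0(\M)$ is generated by $\mc{L}_X\ph = \dd(X\hk\ph)$, and since $X \mapsto X\hk\ph$ identifies $\Vect(\M)$ with $\Om^2_7(\M)$ (see \eref{G2TDEx}), the orbit directions are precisely $\dd\Om^2_7(\M)$. I would therefore model $\T_\ph\lt(\rqt{[\ph]_+}{\Diff_0(\M)}\rt)$ on the $L^2$-orthogonal complement of $\dd\Om^2_7(\M)$ inside $\dd\Om^2(\M)$, and the target is to identify this complement with $W := \dd^*\Om^3(\M) \cap \Om^2_{14}(\M)$ via $\be \mapsto \dd\be$.

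The computational heart of the argument is the claim that for co-exact $\be \in \Om^2_{14}(\M)$ one has $\dd\be \in \Om^3_{27}(\M)$. I would deduce this from the torsion-free conditions $\dd\ph = \dd\ps = 0$ together with the pointwise type-$14$ identities $\be\w\ps = 0$ and $\be\w\ph = -\Hs_\ph\be$. Differentiating the first gives $\dd\be\w\ps = 0$; since $\ga\w\ps$ is a nonzero multiple of $\langle\ga,\ph\rangle\,vol_\ph$, this forces $\pi_1(\dd\be) = 0$. Differentiating the second gives $\dd\be\w\ph = -\dd\Hs_\ph\be$, which vanishes because $\dd\Hs_\ph\be = \pm\Hs_\ph\dd^*\be$ and $\be$, being co-exact, is co-closed ($\dd^*\be = 0$); since $\ga \mapsto \ga\w\ph$ is injective on $\Om^3_7(\M)$ and annihilates $\Om^3_1(\M) \oplus \Om^3_{27}(\M)$ (again \eref{G2TDEx}), this forces $\pi_7(\dd\be) = 0$. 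Hence $\dd\be = \pi_{27}(\dd\be)$, as claimed. Note that co-closedness of $\be$ is exactly what is needed to kill the type-$7$ component.

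Next I would establish the identification. For $L^2$-orthogonality, $\langle\dd\be,\dd(X\hk\ph)\rangle = \langle\dd^*\dd\be, X\hk\ph\rangle = \langle\De\be, X\hk\ph\rangle = 0$, since $\dd^*\be = 0$ gives $\dd^*\dd\be = \De\be \in \Om^2_{14}(\M)$ by \tref{RefHD}, which is pointwise orthogonal to $X\hk\ph \in \Om^2_7(\M)$. Injectivity is immediate: $\dd\be = 0$ with $\be$ co-exact forces $\be$ closed and co-exact, hence $\be = 0$. For surjectivity onto the complement $V = (\dd\Om^2_7)^\perp \cap \dd\Om^2 = \{\si \in \dd\Om^2 : \dd^*\si \in \Om^2_{14}\}$, given $\si \in V$ I set $\rho = G\si$ with $G$ the Green's operator; then $\dd\rho = 0$ and $\De\rho = \si$, and $\be := \dd^*\rho = G\,\dd^*\si$ lies in $W$ (it is co-exact and, since $G$ commutes with all $\pi_q$ by \tref{RefHD}, of type $14$), with $\dd\be = \dd\dd^*\rho = \De\rho = \si$. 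This yields the isomorphism $W \cong V \cong \T_\ph\lt(\rqt{[\ph]_+}{\Diff_0(\M)}\rt)$ and proves the first assertion. Finally, restricting $\mc{D}^2\CH|_\ph$ from \pref{Hitchin-derivs} to slice vectors $\dd\be_i$, the claim gives $\dd\be_2 \in \Om^3_{27}(\M)$, so $I_\ph(\dd\be_2) = -\dd\be_2$ by \eref{IJ}, whence $\mc{D}^2\CH'|_\ph(\dd\be_1,\dd\be_2) = \tfrac{1}{3}\int_\M \dd\be_1 \w \Hs_\ph I_\ph(\dd\be_2) = -\tfrac{1}{3}\langle\dd\be_1,\dd\be_2\rangle = -\tfrac{1}{3}\langle\be_1,\dd^*\dd\be_2\rangle$. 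With respect to the $L^2$ inner product on $W$ this represents $\mc{E}(\ph) = -\tfrac{1}{3}\dd^*\dd = -\tfrac{1}{3}\De|_W$; as $\De$ is self-adjoint and non-negative with kernel $\cal{H}^2(\M)\cap W = 0$, the operator $\mc{E}(\ph)$ is (negative) definite and hence invertible, giving non-degeneracy.

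The main obstacle I expect is twofold. The substantive point is the type computation of the second paragraph, which is what makes the Hessian collapse to a single Laplacian; this is where the torsion-free and co-exactness hypotheses are genuinely used. The more delicate point is justifying the word \emph{formally}: identifying the tangent space of the quotient with the honest $L^2$-slice $V$ (equivalently, the surjectivity of $\be \mapsto \dd\be$ onto $V$ via the Green's operator) and treating the Hessian of the descended functional $\CH'$ rigorously requires an appeal to the slice theory for the $\Diff_0(\M)$-action rather than the purely formal linear-algebraic manipulation sketched here.
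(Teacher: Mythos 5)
Your proof is correct, and its skeleton is the same as the paper's: Cartan's formula gives the orbit directions \(\dd\Om^2_7(\M)\), the quotient tangent space is modelled on an \(L^2\)-orthogonal slice identified with \(W = \dd^*\Om^3(\M)\cap\Om^2_{14}(\M)\), and the Hessian collapses to \(-\frac{1}{3}\dd^*\dd\) because \(\dd\) maps the slice into \(\Om^3_{27}(\M)\), where \(I_\ph\) acts as \(-\Id\). The differences are in how the two key steps are executed. For the slice, the paper transfers everything to co-exact forms at the outset, using the mutually inverse maps \(\dd\) and \(\dd^*G\) between \(\dd^*\Om^3(\M)\) and \(\dd\Om^2(\M)\); the slice is then the orthocomplement of \(\dd^*\dd\Om^2_7(\M)\) inside \(\dd^*\Om^3(\M)\), and a short computation with \tref{RefHD} (a co-exact form is automatically orthogonal to \(\cal{H}^2_7(\M)\), so orthogonality to \(\dd^*\dd\Om^2_7(\M)\) is the same as pointwise orthogonality to \(\Om^2_7(\M)\)) identifies it with \(W\) directly, with no surjectivity left to check. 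You instead take the orthocomplement of \(\dd\Om^2_7(\M)\) inside \(\dd\Om^2(\M)\) and must then prove that \(\be\mt\dd\be\) maps \(W\) bijectively onto it; your Green's-operator argument for surjectivity is correct, and shows in passing that the two natural slices coincide, but it is the extra cost of that choice. For the type computation, the paper simply quotes \eref{14KI} from \aref{G2-Kahler-Id} (for \(\be\in\Om^2_{14}(\M)\) one has \(\dd\be = \frac{1}{4}\Hs_\ph(\dd^{14}_7\be\w\ph) + \dd^{14}_{27}\be\) with \(\dd^{14}_7\be = \dd^*\be\)), whereas you rederive the same fact from the pointwise identities \(\be\w\ps = 0\) and \(\be\w\ph = -\Hs_\ph\be\) together with \(\dd\ph = \dd\ps = 0\); this is more self-contained and makes visible exactly where torsion-freeness and co-closedness enter, at the price of re-proving a special case of the appendix formulae. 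Both routes rest on \tref{RefHD} (commutation of \(\De\), hence \(G\), with the type projections), and both leave the same points formal — the slice theory for the \(\Diff_0(\M)\)-action and the invertibility of \(\mc{E}(\ph)\) — which is consistent with the proposition's own phrasing.
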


\begin{proof}
Recall that \(\T_\ph[\ph]_+\) is simply \(\dd\Om^2(\M)\), by the stability of \g\ 3-forms.  Let \(X\in\Ga(\M,\T\M)\) be a vector-field on M.  The Lie derivative of \(\ph\) along \(X\) can be computed using Cartan's formula \cite[Prop.\ 2.25(d)]{FoDM&LG} to be:
\ew
\cal{L}_X\ph = X\hk\dd\phi + \dd(X\hk\ph) = \dd(X\hk\ph),
\eew
since \(\dd\ph=0\).  Thus, as \(X\) varies, the space of Lie derivatives \(\cal{L}_X\ph\), and hence the tangent space to the orbit of \(\Diff_0(\M)\) through \(\ph\), is precisely the space \(\dd\Om^2_7(\M)\) (cf.\ \eref{G2TDEx}).

Next, I describe the tangent space \(\T_\ph\lt(\rqt{[\ph]_+}{\Diff_0(\M)}\rt)\).  Recall that the usual Hodge decomposition:
\ew
\Om^p(\M) = \cal{H}^p(\M) \ds \dd\Om^{p-1}(\M) \ds \dd^*\Om^{p+1}(\M)
\eew
is valid on closed orbifolds.  Using the isomorphism:
\ew
\bcd
\dd^*\Om^3(\M) \ar[r, shift left = 1.5mm, "\dd"] & \dd\Om^2(\M) \ar[l, shift left = 1.5mm, "\dd^*G"]
\ecd
\eew
(where \(G\) is the Green's operator for the Hodge Laplacian \(\De\) induced by \(\ph\)) I identify \(\T_\ph[\ph]_+\cong\dd^*\Om^3(\M)\) and:
\ew
\T_\ph\Diff_0(\M) \cong \dd^*G\dd\Om^2_7(\M) = \dd^*\dd\Om^2_7(\M)\subset\dd^*\Om^3(\M),
\eew
where the middle equality uses that \(G\) commutes with type-decomposition, since \(\ph\) is torsion-free (see \tref{RefHD}).  Thus, one can identify \(\T_\ph\lt(\rqt{[\ph]_+}{\Diff_0(\M)}\rt)\) with the \(L^2\)-orthocomplement of \(\dd^*\dd\Om^2_7(\M)\) in \(\dd^*\Om^3(\M)\).  Explicitly, writing \(\<,\?\) for the \(L^2\) inner product on forms induced by \(g_\ph\), given \(\ga \in \dd^*\Om^3(\M)\) and \(\de \in \Om^2_7(\M)\), one computes that:
\ew
\<\ga, \dd^*\dd\de\? = \<\dd^*\dd\ga, \de\? = \<\De\ga,\de\? = \<\ga,\De\de\?
\eew
and thus \(\ga \in \dd^*\dd\Om^2_7(\M)^\bot\) \iff\ \(\ga\hs{0.7mm}\bot\hs{0.7mm}\De\Om^2_7(\M)\).  Using the refined Hodge decomposition (see \tref{RefHD}):
\ew
\Om^2_7(\M) = \cal{H}^2_7(\M) \ds \De\Om^2_7(\M),
\eew
together with the fact that \(\ga \in \dd^*\Om^3(\M)\) is automatically orthogonal to \(\cal{H}^2_7(\M)\), it follows that \(\ga \in \dd^*\dd\Om^2_7(\M)^\bot\) \iff\ \(\ga\hs{0.7mm}\bot\hs{0.7mm}\Om^2_7(\M)\) and thus:
\ew
\dd^*\dd\Om^2_7(\M)^\bot = \dd^*\Om^3(\M) \cap \Om^2_{14}(\M).
\eew

Using this description, together with \pref{Hitchin-derivs} and Stokes' Theorem, the second functional derivative of \(\CH'\) at \(\ph\) is:
\ew
\bcd[row sep = 0pt]
\mc{D}^2\CH'|_\ph: \lt(\dd^*\Om^3(\M)\cap\Om^2_{14}(\M)\rt)^2 \ar[r] & \bb{R}\\
(\ga_1, \ga_2) \ar[r, maps to] & \frac{1}{3} \bigint_\M \ga_1 \w\Hs_\ph(\dd^*I\dd\ga_2).
\ecd
\eew
Using \eref{14KI}, one can compute that for \(\ga \in \dd^*\Om^3(\M)\cap\Om^2_{14}(\M)\):
\ew
\dd^*I\dd\ga = -\dd^*\dd\ga.
\eew
Thus, writing \(\<,\?\) for the \(L^2\) inner product on \(\dd^*\Om^3(\M)\) induced by \(\ph\), it follows that:
\ew
\mathcal{D}^2\CH'|_\ph(\ga_1,\ga_2) = -\frac{1}{3}\<\dd\ga_1,\dd\ga_2\?,
\eew
as claimed.

\end{proof}

In light of \pref{CH-Prop}, and motivated by Morse theory, it is natural to ask whether the critical point \(\ph\) has a well-defined notion of Morse index.  Clearly the classical Morse index of \(\ph\) is infinite, since \(\mc{D}^2\CH'|_\ph\) is negative definite.  Nevertheless, it is possible to `regularise' the Morse index of \(\ph\), since the operator \(\frac{1}{3}\mc{E}(\ph)\) has a well-defined spectral Morse index (equivalently, one may consider \(\mc{E}(\ph)\), since spectral Morse indices are scale invariant).  Consider the second-order pseudodifferential operator acting on \(\Om^2(\M)\) via:
\ew
\cal{E}(\ph) = \pi_{harm,\ph} + \De + 2\dd^* I\dd,
\eew
where \(\pi_{harm,\ph}\) denotes the \(L^2\)-orthogonal projection onto \(\ph\)-harmonic forms.  With respect to the decomposition:
\ew
\Om^2(\M) = \cal{H}^2(\M) \ds \dd\Om^1(\M) \ds \dd^*\dd\Om^2_7(\M) \ds \lt[\dd^*\Om^3(\M)\cap\Om^2_{14}(\M)\rt]
\eew
obtained in the proof of \pref{CH-Prop}, the operator \(\cal{E}(\ph)\) acts diagonally, given explicitly by:
\ew
\cal{E}(\ph) =
\begin{cases}
\Id & \text{ on } \cal{H}^2(\M);\\
\dd\dd^* & \text{ on } \dd\Om^1(\M);\\
\dd^*\dd & \text{ on } \dd^*\dd\Om^2_7(\M);\\
-\dd^*\dd & \text{ on } \dd^*\Om^3(\M)\cap\Om^2_{14}(\M).
\end{cases}
\eew
In particular, the operator \(\cal{E}(\ph)\) is invertible and self-adjoint, and has the same negative spectrum as the operator \(\mc{E}(\ph)\) defined in \pref{CH-Prop}.  Thus, by the discussion after \dref{Spec-Morse}, the sum:
\ew
\bcd[row sep = 0pt]
\mu_\ph:\lt\{s \in \bb{C} ~\m|~ \fr{Re}s > \frac{7}{2} \rt\} \ar[r] &\bb{C}\\
s \ar[r, maps to] & \sum_{\substack{\la\in\Spec(\mc{E}(\ph))\\\la<0}}|\la|^{-s}
\ecd
\eew
converges absolutely and locally uniformly, and admits a meromorphic continuation to all of \(\bb{C}\) which is holomorphic at \(0\).  Moreover, the value at \(0\) is simply \(\SI(\cal{E}(\ph))\), and since \(\cal{E}(\ph)\) depends smoothly on \(\ph\) and \(\SI: \Ps^{>0}_{inv-sa} \to \bb{R}\) is smooth, it follows that \(\mu_\ph(0)\) depends smoothly on \(\ph\).  Thus, I obtain:
\begin{Thm}\label{mu3-def-thm}
Let \(\M\) be a closed, oriented 7-orbifold and let \(\cal{G}_2^{TF}(\M)\) denote the moduli space of torsion-free \g-structures on \(\M\).  Define the \(\mu_3\)-invariant of a torsion-free \g\ 3-form \(\ph\) to be the value of the meromorphic function \(\mu_\ph\) at \(0\).  Then \(\mu_3\) is diffeomorphism invariant, invariant under rescaling \(\ph \mt \ell^3 \ph\) for \(\ell > 0\) and defines a smooth map:
\ew
\mu_3: \cal{G}_2^{TF}(\M) \to \bb{R}.
\eew
\end{Thm}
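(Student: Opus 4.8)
The plan is to assemble the ingredients already established in the discussion preceding the statement, so that the analytic core is in hand and only the invariance properties remain. There it was shown that $\cal{E}(\ph) = \pi_{harm,\ph} + \De + 2\dd^*I\dd$ is a smooth, elliptic, invertible, self-adjoint \pd\ operator of order $2$ on $\Om^2(\M)$, that it acts block-diagonally with respect to the refined Hodge decomposition (its only negative block being $-\dd^*\dd$ on $\dd^*\Om^3(\M)\cap\Om^2_{14}(\M)$), and that $\mu_\ph(0) = \SI(\cal{E}(\ph))$. Hence for each torsion-free \g\ 3-form $\ph$ the value $\mu_3(\ph) := \mu_\ph(0) = \SI(\cal{E}(\ph)) \in \bb{R}$ is well-defined, and it remains to prove diffeomorphism invariance, rescaling invariance, and smoothness.

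For diffeomorphism invariance, take $f \in \Diff_0(\M)$. Since $g_{f^*\ph} = f^* g_\ph$, the map $f$ is an isometry $(\M, g_{f^*\ph}) \to (\M, g_\ph)$, so its pullback is a unitary isomorphism $f^*: L^2(\M, g_\ph) \to L^2(\M, g_{f^*\ph})$. Because $f^*$ commutes with $\dd$ and carries the operators $\dd^*$, the type-projections $\pi_q$, and the harmonic projection $\pi_{harm}$ attached to $\ph$ to those attached to $f^*\ph$, it follows that $\cal{E}(f^*\ph)$ is conjugate to $\cal{E}(\ph)$ by $f^*$. Conjugate operators have identical spectra, hence identical $\ze$- and $\eta$-functions and identical spectral Morse index, so $\mu_3(f^*\ph) = \mu_3(\ph)$. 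Thus $\mu_3$ is constant on $\Diff_0(\M)$-orbits and descends to a function on $\cal{G}_2^{TF}(\M)$; the orbifold case is formally identical, all operators being defined through equivariant local lifts.

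For rescaling invariance I would track how each building block transforms under $\ph \mt \ell^3\ph$, which induces $g_\ph \mt \ell^2 g_\ph$. Rescaling the metric within a fixed degree multiplies the $L^2$ inner product by a positive constant, so the type-decomposition $\Om^p = \Ds_q \Om^p_q$, the four Hodge summands, and the projection $\pi_{harm}$ are all unchanged, whereas $\dd^*$ — and therefore $\De$ and $\dd^*I\dd$ — each scale by $\ell^{-2}$. Consequently $\cal{E}(\ell^3\ph)$ equals $\Id$ on $\cal{H}^2(\M)$ and $\ell^{-2}\cal{E}(\ph)$ on its $L^2$-orthocomplement, so its negative spectrum is exactly $\ell^{-2}$ times that of $\cal{E}(\ph)$. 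Since $\mu_\ph$ sums only over negative eigenvalues, $\mu_{\ell^3\ph}(s) = \ell^{2s}\mu_\ph(s)$ for $\fr{Re}(s)$ large; continuing meromorphically and evaluating at $s = 0$ gives $\mu_3(\ell^3\ph) = \mu_\ph(0) = \mu_3(\ph)$. This is precisely the scale-invariance of $\SI$ recorded in \tref{S-APS} and \dref{Spec-Morse}, applied to the negative part of the spectrum.

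Finally, smoothness follows by composing the smooth map $\SI: \Ps^{>0}_{\text{inv-sa}}(\M; \ww{2}\T^*\M) \to \bb{R}$ of \tref{S-APS} with $\ph \mt \cal{E}(\ph)$, so it suffices to show the latter is smooth into the space of \pd\ operators. The operator $\dd$ is independent of $\ph$, while $\dd^*$ and the algebraic projections $\pi_q$ have symbols depending polynomially on $\ph$ and $g_\ph^{-1}$ and hence vary smoothly; the one genuinely nonlocal term, and the main obstacle, is the spectral projection $\pi_{harm,\ph}$. I would handle this by analytic perturbation theory: since $\dim \cal{H}^2(\M) = b^2(\M)$ is a topological constant along the family of torsion-free structures, $0$ is an isolated point of $\Spec(\De_\ph)$ of locally constant multiplicity, so $\pi_{harm,\ph} = \frac{1}{2\pi i}\oint (z\Id - \De_\ph)^{-1}\,\dd z$ along a fixed small circle about $0$ realises $\pi_{harm,\ph}$ as a finite-rank smoothing operator depending smoothly on $\ph$. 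Combining these facts gives smoothness of $\ph \mt \mu_3(\ph)$ on the space of torsion-free forms, which by the diffeomorphism invariance above descends to a smooth map $\mu_3: \cal{G}_2^{TF}(\M) \to \bb{R}$.
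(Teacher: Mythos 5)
Your proposal is correct and follows essentially the same route as the paper: the paper's proof likewise treats well-definedness and smoothness as already established by the preceding discussion of \(\cal{E}(\ph)\) and \(\SI\), and its proof proper consists exactly of your rescaling argument, namely \(g_{\ell^3\ph} = \ell^2 g_\ph\) gives \(\dd^*_{\ell^3\ph} = \ell^{-2}\dd^*_\ph\), so the negative spectrum of \(\cal{E}(\ell^3\ph)\) is that of \(\ell^{-2}\cal{E}(\ph)\) (the harmonic block being the only discrepancy), whence invariance follows from the scale invariance of \(\SI\). The only difference is that you additionally spell out two points the paper leaves implicit — diffeomorphism invariance via unitary conjugation by \(f^*\), and smooth dependence of \(\pi_{harm,\ph}\) via the Riesz contour-integral projection with \(\dim\cal{H}^2(\M)\) locally constant — both of which are correct and strengthen the exposition.
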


\begin{proof}
The only statement which remains to be proven is that \(\mu_3\lt(\ell^3\ph\rt) = \mu_3(\ph)\).  As \(g_{\ell^3\ph} = \ell^2g_\ph\), it follows from \cite[p.\ 306]{OtHE&tIT} that \(\dd^*_{\ell^3\ph} = \ell^{-2}\dd^*_\ph\).  Thus the negative spectrum of \(\cal{E}\lt(\ell^3\ph\rt)\) coincides with the negative spectrum of \(\ell^{-2}\cal{E}(\ph)\) (even though \(\cal{E}\lt(\ell^3\ph\rt) \ne \ell^{-2}\cal{E}(\ph)\), due to the presence of \(\pi_{harm, \ph}\) in the definition of \(\cal{E}(\ph)\)).  The result now follows from the scale invariance of \(\SI\).

\end{proof}
~

\section{\(\mu_4\): Morse indices of the critical points of \(\CCH\)}\label{THoCCHaCP&tmu4I}

The aim of this section is to prove that the critical points of the functional \(\CCH\) also have well-defined spectral Morse indices.  Let \(\M\) be a closed, oriented 7-orbifold, let \(\ps\) be a torsion-free \g\ 4-form on \(\M\) and write \(\CCH'\) for the functional \(\rqt{[\ps]_+}{\Diff_0(\M)} \to (0,\infty)\) induced by \(\CCH\).  The Hessian \(\mc{D}^2\CCH'|_\ps\) is completely described via the following result:

\begin{Prop}\label{CCH-Prop}
Write \(\Om^3_{1 \ds 27}(\M)\) as a shorthand for \(\Om^3_1(\M) \ds \Om^3_{27}(\M)\).  Then the tangent space \(\T_\ps\lt(\rqt{[\ps]_+}{\Diff_0(\M)}\rt)\) can formally be identified with the space:
\ew
\dd^*\Om^4(\M) \cap \Om^3_{1 \ds 27}(\M).
\eew
Moreover, there is an \(L^2\)-orthogonal decomposition:
\ew
\dd^*\Om^4(\M) \cap \Om^3_{1 \ds 27}(\M) &= \lt\{\om\in\dd^*\Om^4(\M)\cap\Om^3_{1 \ds 27}(\M)~\middle|~\pi_{27}\dd\om = 0\rt\} \ds \lt(\dd^*\Om^4(\M) \cap \Om^3_{27}(\M)\rt)\\
&= \mc{S}^+_4(\ps) \ds \mc{S}^-_4(\ps)
\eew
and, using the \(L^2\) inner product, \(\mc{D}^2\CCH'|_\ps\) can formally be identified with the invertible linear map \(\frac{1}{4}\dd^*\dd \ds -\frac{1}{4}\dd^*\dd\) on \(\mc{S}^+_4(\ps) \ds \mc{S}^-_4(\ps)\); denote this map by \(\frac{1}{4}\mc{F}(\ps)\).  In particular, \(\mc{D}^2\CCH'|_\ps\) is positive/negative definite on \(\mc{S}^\pm_4(\ps)\), respectively, and the critical points of \(\CCH'\) are non-degenerate saddles.
\end{Prop}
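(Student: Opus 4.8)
The plan is to mirror the proof of \pref{CH-Prop}, raising every degree by one and replacing \(I_\ph\) by \(J_\ps\). First I would identify the tangent space. By the stability of \g\ 4-forms one has \(\T_\ps[\ps]_+=\dd\Om^3(\M)\); since \(\dd\ps=0\), Cartan's formula gives \(\cal{L}_X\ps=\dd(X\hk\ps)\), and as \(X\) varies \(\{X\hk\ps\}=\Om^3_7(\M)\) (cf.\ the description of \(\ww[7]{3}\lt(\bb{R}^7\rt)^*\) in \eref{G2TDEx}), so the tangent space to the \(\Diff_0(\M)\)-orbit is \(\dd\Om^3_7(\M)\). Via the Green's operator isomorphism \(\dd^*\Om^4(\M)\cong\dd\Om^3(\M)\) I identify \(\T_\ps[\ps]_+\cong\dd^*\Om^4(\M)\) and the orbit direction with \(\dd^*\dd\Om^3_7(\M)\). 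Exactly as {\it op.\ cit.}\ --- using that the Green's operator commutes with type-decomposition (\tref{RefHD}) and that co-exact forms are orthogonal to \(\cal{H}^3_7(\M)\) --- the \(L^2\)-orthocomplement of \(\dd^*\dd\Om^3_7(\M)\) in \(\dd^*\Om^4(\M)\) is exactly \(V:=\dd^*\Om^4(\M)\cap\Om^3_{1\ds27}(\M)\), giving the first assertion.

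Next I would reduce the Hessian to an operator on \(V\). Substituting \(\vpi_i=\dd\om_i\) (\(\om_i\in V\)) into the formula of \pref{Hitchin-derivs} and integrating by parts yields
\ew
\mc{D}^2\CCH'|_\ps(\om_1,\om_2)=\frac{1}{4}\<\om_1,\dd^* J_\ps\dd\om_2\?,
\eew
so everything hinges on the self-adjoint operator \(\dd^* J_\ps\dd\) on \(V\). The crux is two pointwise facts for torsion-free structures, both encoded in the refined operators of \tref{G2KI} (see \aref{G2-Kahler-Id}). First, \(\pi_1\dd\om=0\) for every \(\om\in\Om^3_{1\ds27}(\M)\): for \(\om=f\ph\in\Om^3_1(\M)\) this is immediate, since \(\dd(f\ph)=\dd f\w\ph\in\Om^4_7(\M)\), while for \(\om\in\Om^3_{27}(\M)\) it reflects the fact that there is no \g-equivariant contraction \(\Om^1\ts\Om^3_{27}\to\Om^4_1\). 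Second, \(\pi_7\dd\om=0\) for every co-closed \(\om\in\Om^3_{27}(\M)\): the \(7\)-dimensional isotypic part of \(\nabla\om\) occurs with multiplicity one and governs \emph{both} \(\pi_7\dd\om\) and the \(\Om^2_7(\M)\)-component of \(\dd^*\om\), so the vanishing of the latter forces that of the former. Granting the first fact, \(\dd^* J_\ps\dd=\dd^*\dd-2\dd^*\pi_{27}\dd\) on \(V\).

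I would then assemble the splitting. Put \(\mc{S}^+_4(\ps)=\Ker\lt(\pi_{27}\dd|_V\rt)\) and \(\mc{S}^-_4(\ps)=\dd^*\Om^4(\M)\cap\Om^3_{27}(\M)\). Since every element of \(V\) is co-closed, \(\dd^*\dd=\De\) on \(V\); hence the first fact gives \(\dd^* J_\ps\dd=\De\) on \(\mc{S}^+_4(\ps)\), while on \(\mc{S}^-_4(\ps)\) the second fact gives \(\dd\om=\pi_{27}\dd\om\) and therefore \(\dd^* J_\ps\dd=-\De\). Both subspaces are preserved by \(\dd^* J_\ps\dd\) (as \(\De\) commutes with \(\dd\) and with \(\pi_{27}\), by \tref{RefHD}) and meet only in the co-exact harmonic forms, namely \(0\). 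For spanning, given \(\om\in V\) I decompose its type-\(27\) part via the refined Hodge decomposition of \tref{RefHD}; only the co-exact summand \(\om^-\in\mc{S}^-_4(\ps)\) contributes to \(\pi_{27}\dd\om\), so \(\om-\om^-\in\mc{S}^+_4(\ps)\). Orthogonality is then immediate: for \(\om^\pm\in\mc{S}^\pm_4(\ps)\), co-closedness gives \(\<\om^+,\De\om^-\?=\<\dd\om^+,\dd\om^-\?\), which vanishes because \(\dd\om^+\in\Om^4_7(\M)\) and \(\dd\om^-\in\Om^4_{27}(\M)\) have distinct types, and as \(\De\) restricts to an isomorphism of \(\mc{S}^-_4(\ps)\) it follows that \(\om^+\perp\mc{S}^-_4(\ps)\). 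This exhibits \(\mc{D}^2\CCH'|_\ps\) as \(\dd^*\dd\ds-\dd^*\dd\) on \(\mc{S}^+_4(\ps)\ds\mc{S}^-_4(\ps)\) (up to the harmless factor \(\frac{1}{4}\)); since \(\De\) is invertible on co-exact forms and both summands are non-zero, the critical point is a non-degenerate saddle.

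The step I expect to be the main obstacle is the second pointwise fact, i.e.\ the precise proportionality between \(\pi_7\dd\om\) and the type-\(7\) part of \(\dd^*\om\) on \(\Om^3_{27}(\M)\). This is the genuinely new torsion-free \g\ input, with no counterpart in \pref{CH-Prop}: there the relevant space \(\Om^2_{14}(\M)\) was \g-irreducible and a single identity (\eref{14KI}) sufficed, whereas here the mixing of types \(1\) and \(27\) under \(J_\ps\) must be disentangled through the interaction of \(\dd\) with the Hodge decomposition. Tracking the exact constants through \aref{G2-Kahler-Id} will require care, although only the \emph{signs} of the eigenvalues of \(\dd^* J_\ps\dd\) matter for the final conclusion.
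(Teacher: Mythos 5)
Your overall route is the same as the paper's: identify \(\T_\ps\lt(\rqt{[\ps]_+}{\Diff_0(\M)}\rt)\) with \(V = \dd^*\Om^4(\M)\cap\Om^3_{1 \ds 27}(\M)\), reduce the Hessian to \(\dd^*J\dd\), split \(V\) according to whether \(\dd\om\) has pure type \(7\) or pure type \(27\), and read off the signs from \(J\). Your two ``pointwise facts'' are correct, and they are precisely \eref{1KI}, \eref{27KI} and \eref{d*=0} of the paper; in particular, the step you single out as the likely obstacle (the proportionality of \(\pi_7\dd\ga\) and the type-\(7\) part of \(\dd^*\ga\) for \(\ga\in\Om^3_{27}(\M)\), both being multiples of \(\dd^{27}_7\ga\)) is a one-line consequence of those identities and is not where the difficulty lies.

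The genuine gap is the spanning step \(V = \mc{S}^+_4(\ps) + \mc{S}^-_4(\ps)\), which you dispatch in a single sentence and which is where the paper's proof spends essentially all of its effort. You take \(\om^-\) to be ``the co-exact summand of the type-\(27\) part'' of \(\om\). But \tref{RefHD} only asserts that \(\De\) (hence \(G\) and \(\pi_{harm}\)) commutes with the type projections; it provides no exact/co-exact splitting within each type, and the operators \(\dd\dd^*\) and \(\dd^*\dd\) separately do \emph{not} commute with \(\pi_q\), so the co-exact projection \(G\dd^*\dd\) of the full Hodge decomposition does not preserve \(\Om^3_{27}(\M)\). Concretely, write \(\om = f\ph+\ga\in V\) with \(\ga\in\Om^3_{27}(\M)\); by \eref{d*=0} one has \(\dd^{27}_7\ga = 3\dd f\), so by \eref{27KI} the type-\(7\) part of \(\dd\ga\) is \(\frac{3}{4}\dd f\w\ph\), and applying \(\dd^*\) to this via the formula for \(\dd\) on \(\Om^3_7(\M)\) in \aref{G2-Kahler-Id} produces a type-\(1\) component proportional to \((\De f)\ph\). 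Hence the co-exact part \(G\dd^*\dd\ga\) of \(\ga\) has a type-\(1\) component proportional to \(G(\De f)\,\ph\), which is non-zero whenever \(f\) is non-constant --- and every non-zero element of \(\mc{S}^+_4(\ps)\) has non-constant \(f\) (a co-exact form is orthogonal to the harmonic form \(\ph\), so constant \(f\) forces \(f=0\), after which \eref{d*=0} and \eref{27KI} force \(\om\) to be harmonic and co-exact, hence \(0\)). So your \(\om^-\) fails to lie in \(\Om^3_{27}(\M)\), hence fails to lie in \(\mc{S}^-_4(\ps)\), and your \(\om-\om^-\) fails to lie in \(\Om^3_{1 \ds 27}(\M)\). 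The paper circumvents exactly this difficulty by constructing the splitting by hand, \(\om^+ = f\ph + \frac{7}{12}\dd^7_{27}\dd^{27}_7G\ga\) and \(\om^- = \ga - \frac{7}{12}\dd^7_{27}\dd^{27}_7G\ga\), and then verifying \(\dd^*\om^\pm = 0\), \(\pi_{27}\dd\om^+ = 0\) and \(\pi_7\dd\om^- = 0\) through a chain of computations with the identities \eref{d2=0} and \eref{27-De}. Some replacement for this construction is indispensable: as it stands you have shown only that \(\mc{S}^+_4(\ps)\ds\mc{S}^-_4(\ps)\) is an orthogonal direct sum inside \(V\), not that it exhausts \(V\), so the identification of \(\mc{D}^2\CCH'|_\ps\) with \(\dd^*\dd\ds-\dd^*\dd\), and with it the non-degenerate saddle conclusion, is incomplete.
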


\begin{proof}
As in the proof of \pref{CH-Prop}, one can identify \(\T_\ps[\ps]_+\) and \(\T_\ps \lt(\Diff_0(\M)\1\ps\rt)\) with the spaces \(\dd^*\Om^4(\M)\) and \(\dd^*\dd\Om^3_7(\M)\), respectively. Hence one can identify \(\T_\ps\lt(\rqt{[\ps]_+}{\Diff_0(\M)}\rt)\) with the \(L^2\)-orthocomplement of \(\dd^*\dd\Om^3_7(\M)\) in \(\dd^*\Om^4(\M)\), {\it viz.}:
\ew
\dd^*\Om^4(\M) \cap \Om^3_{1 \ds 27}(\M).
\eew

Using this description, together with \pref{Hitchin-derivs} and Stokes' Theorem, the second functional derivative of \(\CCH'\) at \(\ps\) is:
\ew
\bcd[row sep = 0pt]
\mc{D}^2\CCH'|_\ps: \lt(\dd^*\Om^4(\M) \cap \Om^3_{1 \ds 27}(\M)\rt)^2 \ar[r] & \bb{R}\\
(\om_1,\om_2) \ar[r, maps to] & \frac{1}{4}\bigint_\M \om_1 \w\Hs_\ps(\dd^*J\dd\om_2),
\ecd
\eew
where \(J = \frac{3}{4}\pi_1 + \pi_7 - \pi_{27}\) was defined in \eref{IJ}.  To further analyse \(\mc{D}^2\CCH'|_\ps\), I prove:

\begin{Cl}
There is an \(L^2\)-orthogonal decomposition:
\ew
\dd^*\Om^4(\M) \cap \Om^3_{1 \ds 27}(\M) = \underbrace{\lt\{\om\in\dd^*\Om^4(\M)\cap\Om^3_{1 \ds 27}(\M)~\middle|~\pi_{27}\dd\om = 0\rt\}}_{= \mc{S}^+_4(\ps)} \ds \underbrace{\lt\{\om\in\dd^*\Om^4(\M)\cap\Om^3_{1 \ds 27}(\M)~\middle|~\pi_7\dd\om = 0\rt\}}_{= \mc{S}^-_4(\ps)}.
\eew
Moreover:
\e\label{equiv-of-expressions}
\lt\{\om\in\dd^*\Om^4(\M)\cap\Om^3_{1 \ds 27}(\M)~\middle|~\pi_7\dd\om = 0\rt\} = \dd^*\Om^4(\M)\cap \Om^3_{27}(\M).
\ee
\end{Cl}

\begin{proof}[Proof of Claim]
Recall that in the statement of \tref{G2KI}, there are no operators of the form \(d^1_1\) and \(d^{27}_1\). This implies, in particular, that:
\ew
\dd\Big(\Om^3_{1 \ds 27}(\M)\Big) \subset \Om^4_7(\M) \ds \Om^4_{27}(\M)
\eew
and hence the spaces \(\dd\mc{S}^+_4(\ps)\pc\Om^4_7(\M)\) and \(\dd\mc{S}^-_4(\ps)\pc\Om^4_{27}(\M)\) are \(L^2\)-orthogonal. Using \tref{RefHD}, one can also verify that \(\dd^*\dd\mc{S}^\pm_4(\ps) = \De\mc{S}^\pm_4(\ps) = \mc{S}^\pm_4(\ps)\). Thus:
\ew
\mc{S}^+_4(\ps) \text{ and } \mc{S}^-_4(\ps) \text{ are } L^2\text{-orthogonal} & \LE \dd^*\dd\mc{S}^+_4(\ps) \text{ and } \mc{S}^-_4(\ps) \text{ are } L^2\text{-orthogonal}\\
&\LE \dd\mc{S}^+_4(\ps) \text{ and } \dd\mc{S}^-_4(\ps) \text{ are } L^2\text{-orthogonal},
\eew
so \(\mc{S}^+_4(\ps)\) and \(\mc{S}^-_4(\ps)\) are indeed \(L^2\)-orthogonal, as claimed. To complete the proof, therefore, it suffices to prove that each \(\om\in\dd^*\Om^4(\M)\cap\Om^3_{1 \ds 27}(\M)\) can be written as \(\om = \om^++\om^-\), for some \(\om^\pm\in\mc{S}^\pm_4(\ps)\), and to verify \eref{equiv-of-expressions}.

Given \(\om\in \dd^*\Om^4(\M) \cap \Om^3_{1 \ds 27}(\M)\), write \(\om = f\ph + \ga\) for some unique \(f\in\Om^0(\M)\) and \(\ga\in\Om^3_{27}(\M)\).  Note that one can trivially write:
\e\label{S-split}
\om =\bigg (f\ph + \frac{7}{12}\dd^7_{27}\dd^{27}_7G\ga\bigg) + \bigg(\ga - \frac{7}{12}\dd^7_{27}\dd^{27}_7G\ga\bigg) = \om^+ + \om^-,
\ee
where \(G\) denotes the Green's operator for the Hodge Laplacian defined by \(\ps\). I claim that \(\om^\pm\in\mc{S}^\pm_4(\ps)\), i.e.:
\ew
\om^\pm \in \dd^*\Om^4(\M) \cap \Om^3_{1 \ds 27}(\M), \hs{5mm} \pi_{27}\dd\om^+ = 0 \et \pi_7\dd\om^- = 0.
\eew

Begin with the first of these points.  Since clearly \(\om^\pm \in \Om^3_{1 \ds 27}(\M)\), it \stp\ \(\om^\pm \in \dd^*\Om^4(\M)\).  Since \(\om = f\ph + \ga \in \dd^*\Om^4(\M) \pc \De\Om^3(\M)\), it follows that \(f\ph \in \De\Om^3_1(\M)\) by \tref{RefHD} and hence it is orthogonal to \(\cal{H}^3_1(\M)\); likewise \(\ga \in \De\Om^3_{27}(\M)\) is orthogonal to \(\cal{H}^3_{27}(\M)\).  Moreover:
\ew
\frac{7}{12}\dd^7_{27}\dd^{27}_7G\ga = \frac{7}{12}G \dd^7_{27}\dd^{27}_7\ga \in \De\Om^3_{27}(\M)
\eew
and hence \(\frac{7}{12}\dd^7_{27}\dd^{27}_7G\ga\) is also orthogonal to \(\cal{H}^3_{27}(\M)\).  It follows that \(\om^\pm\) are each orthogonal to \(\cal{H}^3(\M)\) and so to prove that \(\om^\pm \in \dd^*\Om^4(\M)\), it \stp\ that \(\dd^*\om^\pm = 0\).

In general, given \(f' \in \Om^0(\M)\) and \(\ga' \in \Om^3_{27}(\M)\), by \erefs{1KI} and \eqref{27KI} the condition \(\dd^*(f'\ph + \ga') = 0\) is equivalent to the pair of equations:
\e\label{d*=0}
\dd^{27}_7\ga' = 3\dd f' \et \dd^{27}_{14}\ga' = 0.
\ee
Since \(\om = f\ph + \ga \in \dd^*\Om^4(\M)\), it follows that \(\dd^{27}_7\ga = 3\dd f\) and \(\dd^{27}_{14}\ga = 0\).  Therefore:
\begin{alignat*}{2}
\dd^{27}_7 \lt(\frac{7}{12}\dd^7_{27}\dd^{27}_7G\ga\rt) &= \dd^{27}_7\lt(\De G\ga - (\dd^{27}_{27})^2G\ga\rt) & \hs{5mm} & \text{(using \eref{27-De} and \(\dd^{27}_{14}\ga = 0\))}\\
&= \dd^{27}_7\underbrace{\De G\ga}_{=\ga} - \underbrace{(\dd^{27}_7\dd^{27}_{27})}_{=\frac{1}{2}\dd^7_7\dd^{27}_7}\dd^{27}_{27}G\ga & \hs{5mm} & \text{(using \(\ga\hs{0.7mm}\bot\hs{0.7mm}\cal{H}^3_{27}(\M)\) and \eref{d2=0})}\\
& = \dd^{27}_7\ga - \frac{1}{2}\dd^7_7\underbrace{(\dd^{27}_7\dd^{27}_{27})}_{= -\frac{3}{2}\dd^{14}_7\dd^{27}_{14}}G\ga & \hs{5mm} & \text{(using two subequations from \eref{d2=0})}\\
&= \dd^{27}_7\ga & \hs{5mm} & \text{(using \(\dd^{27}_{14}G\ga = G\dd^{27}_{14}\ga = 0\))}.
\end{alignat*}

Likewise:
\begin{alignat*}{2}
\dd^{27}_{14} \lt(\frac{7}{12}\dd^7_{27}\dd^{27}_7G\ga\rt) &= \underbrace{\dd^{27}_{14}\ga}_{=0} - \underbrace{\lt(\dd^{27}_{14}\dd^{27}_{27}\rt)}_{=-\frac{1}{4}\dd^7_{14}\dd^{27}_7}\dd^{27}_{27}G\ga & \hs{5mm} & \text{(using \eref{d2=0})}\\
&= \frac{1}{4}\dd^7_{14}\underbrace{\lt(\dd^{27}_7 \dd^{27}_{27}\rt)}_{= -\frac{3}{2}\dd^{14}_7\dd^{27}_{14}} G\ga & \hs{5mm} & \text{(using two subequations from \eref{d2=0})}\\
&= 0 & \hs{5mm} & \text{(using \(\dd^{27}_{14}G\ga = G\dd^{27}_{14}\ga = 0\))}.
\end{alignat*}

It follows from these last two calculations, together with the conditions \(\dd^{27}_7\ga = 3\dd f\) and \(\dd^{27}_{14}\ga = 0\), that \(\om^\pm\) each satisfy \eref{d*=0} and hence \(\dd^*\om^\pm = 0\).  Thus, all that remains is to prove \(\pi_{27}\dd\om^+ = 0\) and \(\pi_7\dd\om^- = 0\).

Using \eref{27KI}, one computes:
\ew
\pi_{27}\dd\om^+ &= \frac{7}{12}\Hs_\ph\underbrace{\lt(\dd^{27}_{27}\dd^7_{27}\rt)}_{\mfn{=\frac{1}{2}\dd^7_{27}\dd^7_7}}\dd^{27}_7G\ga \hs{5mm} \text{(by \eref{d2=0})}\\
&= \frac{7}{24}\Hs_\ph\dd^7_{27}\underbrace{\lt(\dd^7_7\dd^{27}_7\rt)}_{\mfn{= -3\dd^{14}_7\dd^{27}_{14}}}G\ga \hs{5mm} \text{(by \eref{d2=0})}\\
&= -\frac{7}{8}\Hs_\ph\dd^7_{27}\dd^{14}_7\dd^{27}_{14}G\ga = 0,
\eew
since \(\dd^{27}_{14}G\ga = G\dd^{27}_{14}\ga = 0\).  Similarly:
\ew
\pi_7\dd\om^- = \frac{1}{4}\dd^{27}_7\lt[\ga - \frac{7}{12}\dd^7_{27}\dd^{27}_7G\ga\rt] \w \ph = 0,
\eew
since \(\dd^{27}_7\lt(\frac{7}{12}\dd^7_{27}\dd^{27}_7G\ga\rt) = \dd^{27}_7\ga\), as above.  Thus \(\om^\pm \in \mc{S}^\pm_4(\ps)\), as claimed.

Finally, to verify that \(\mc{S}^-_4(\ps) = \dd^*\Om^4(\M) \cap \Om^3_{27}(\M)\), note that \(\mc{S}^-_4(\ps) \cc \dd^*\Om^4(\M) \cap \Om^3_{27}(\M)\) follows by \eref{S-split}.  Conversely, if \(\ga \in \dd^*\Om^4(\M) \cap \Om^3_{27}(\M)\), then \(\dd^*\ga = 0\) forces \(\dd^{27}_7\ga = 0\) by \eref{d*=0} and hence \(\pi_7\dd\ga = 0\) (see \eref{27KI}).  Thus \(\ga \in \mc{S}^-_4(\ps)\), completing the proof of the claim.

\let\qed\relax
\end{proof}

Given this claim, \tref{CCH-Prop} follows swiftly.  Indeed, recalling the definition of \(J\) in \eref{IJ}, it follows that for \(\om\in\dd^*\Om^4(\M)\cap\Om^3_{1 \ds 27}(\M)\):
\ew
\dd^*J\dd\om = 
\begin{cases}
+\dd^*\dd\om \in \mc{S}^+_4(\ps) & \text{ if } \om \in \mc{S}^+_4(\ps);\\
-\dd^*\dd\om \in \mc{S}^-_4(\ps) & \text{ if } \om \in \mc{S}^-_4(\ps).\\
\end{cases}
\eew
Thus, the symmetric bilinear form \(D^2\CCH'|_\ps\) is given by:
\ew
D^2\CCH'|_\ps(\om_1,\om_2) =
\begin{cases}
+\frac{1}{4}\<\dd\om_1,\dd\om_2\? & \text{ if } \om_1, \om_2 \in \mc{S}^+_4(\ps);\\
-\frac{1}{4}\<\dd\om_1,\dd\om_2\? & \text{ if } \om_1, \om_2 \in \mc{S}^-_4(\ps),
\end{cases}
\eew
where \(\<,\?\) denotes the \(L^2\) inner product on \(\dd^*\Om^4(\M)\) induced by \(\ps\).

\end{proof}

It follows from previous work of the author (see \cite[Thm.\ 3.9]{UA&BotDHFoG2&SG2F}) that both \(\mc{S}^\pm\) are infinite dimensional.  In particular, the classical Morse index of \(\ps\) is, as for \(\CH\), infinite.  Nevertheless, it is, again, possible to define the regularised Morse index of \(\ps\).  Consider the second-order pseudodifferential operator:
\ew
\cal{F}(\ps) = \pi_{harm,\ps} + \De + 2\dd^*J\dd
\eew
where \(\pi_{harm,\ps}\) denotes the \(L^2\)-orthogonal projection onto \(\ps\)-harmonic forms.  With respect to the decomposition:
\ew
\Om^3 = \cal{H}^3(\M) \ds \dd\Om^2(\M) \ds \dd^*\dd\Om^3_7(\M) \ds \mc{S}^+_4(\ps) \ds \mc{S}^-_4(\ps)
\eew
obtained in the proof of \pref{CCH-Prop}, the operator \(\cal{F}(\ps)\) acts diagonally, given explicitly by:
\ew
\cal{F}(\ph) =
\begin{cases}
\Id & \text{ on } \cal{H}^3(\M);\\
\dd\dd^* & \text{ on } \dd\Om^2(\M);\\
\dd^*\dd & \text{ on } \dd^*\dd\Om^3_7(\M);\\
3\dd^*\dd & \text{ on } \mc{S}^+_4(\ps);\\
-\dd^*\dd & \text{ on } \mc{S}^-_4(\ps).
\end{cases}
\eew
In particular, the operator \(\cal{F}(\ps)\) is invertible and self-adjoint, and has the same negative spectrum as the operator \(\mc{F}(\ps)\) defined in \pref{CCH-Prop}.  Thus, by the discussion after \dref{Spec-Morse}, the sum:
\ew
\bcd[row sep = 0pt]
\mu_\ps:\lt\{s \in \bb{C} ~\m|~ \fr{Re}s > \frac{7}{2} \rt\} \ar[r] &\bb{C}\\
s \ar[r, maps to] & \sum_{\substack{\la\in\Spec(\mc{F}(\ps))\\\la<0}}|\la|^{-s}
\ecd
\eew
converges absolutely and locally uniformly, and admits a meromorphic continuation to all of \(\bb{C}\) which is holomorphic at \(0\).  Moreover, the value at \(0\) is simply \(\SI(\cal{F}(\ps))\) and since \(\cal{F}(\ps)\) depends smoothly on \(\ps\), and \(\SI: \Ps^{>0}_{inv-sa} \to \bb{R}\) is smooth, it follows that \(\mu_\ps(0)\) depends smoothly on \(\ps\).  Moreover \(\mu_\ps(0)\) is scale invariant by the same argument as for \(\mu_3\).  Thus, it has been shown that:
\begin{Thm}\label{mu4-def-thm}
Let \(\M\) be a closed, oriented 7-orbifold and let \(\cal{G}_2^{TF}(\M)\) denote the moduli space of torsion-free \g-structures on \(\M\).  Define the \(\mu_4\)-invariant of a torsion-free \g\ 4-form \(\ps\) to be the value of the meromorphic function \(\mu_\ps\) at \(0\).  Then \(\mu_4\) is diffeomorphism invariant, invariant under rescaling \(\ps \mt \ell^4 \ps\) for \(\ell > 0\) and defines a smooth map:
\ew
\mu_4: \cal{G}_2^{TF}(\M) \to \bb{R}.
\eew\\
\end{Thm}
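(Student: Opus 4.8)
The plan is to follow the proof of \tref{mu3-def-thm} essentially verbatim, since the discussion immediately preceding the statement has already done all of the analytic work. First I would note that, by that discussion, \(\mu_\ps\) continues holomorphically to a neighbourhood of \(0\) with \(\mu_\ps(0) = \SI(\cal{F}(\ps))\), and that smoothness of \(\mu_4\) is inherited from the smoothness of \(\SI\) (\tref{S-APS}) together with the smooth dependence of \(\cal{F}(\ps)\) on \(\ps\). For diffeomorphism invariance I would observe that \(\cal{F}(\ps)\) is built naturally out of \(\ps\), so that for \(\chi \in \Diff_0(\M)\) the operator \(\cal{F}(\chi^*\ps)\) is the conjugate \(\chi^*\cal{F}(\ps)(\chi^*)^{-1}\), hence isospectral to \(\cal{F}(\ps)\); this is precisely what allows \(\mu_4\) to descend to \(\cal{G}_2^{TF}(\M)\). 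The one genuinely new computation is therefore the rescaling identity \(\mu_4(\ell^4\ps) = \mu_4(\ps)\).

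To establish this, I would first record how the ingredients of \(\cal{F}(\ps)\) transform under \(\ps \mt \ell^4\ps\). The crucial point is that scaling the \g\ 4-form by \(\ell^4\) scales the induced metric by \(g_{\ell^4\ps} = \ell^2 g_\ps\), which is the 4-form analogue of the relation \(g_{\ell^3\ph} = \ell^2 g_\ph\) used for \(\mu_3\) and is consistent with it via \(\ps = \Hs_\ph\ph\). Invoking \cite{OtHE&tIT} exactly as before then gives \(\dd^*_{\ell^4\ps} = \ell^{-2}\dd^*_\ps\), so that \(\De\) and \(\dd^*J\dd\) each rescale by \(\ell^{-2}\). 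I would also note that the type-decomposition projections (hence \(J\)), the space \(\cal{H}^3(\M)\) of harmonic \(3\)-forms, and therefore \(\pi_{harm,\ps}\), are all unchanged, since a positive rescaling acts as a positive scalar on each isotypic summand and multiplies \(\De\) by a positive constant.

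It then follows that \(\cal{F}(\ell^4\ps) = \pi_{harm,\ps} + \ell^{-2}\lt(\cal{F}(\ps) - \pi_{harm,\ps}\rt)\). This does \emph{not} equal \(\ell^{-2}\cal{F}(\ps)\) — the discrepancy, already flagged in \tref{mu3-def-thm}, is exactly the harmonic-projection term — but the two operators agree on the \(L^2\)-orthocomplement of \(\cal{H}^3(\M)\) and differ only on \(\cal{H}^3(\M)\), where each is strictly positive (acting as \(\Id\) and \(\ell^{-2}\Id\) respectively). Hence they share the same negative spectrum, so \(\SI(\cal{F}(\ell^4\ps)) = \SI(\ell^{-2}\cal{F}(\ps))\), and the scale invariance of \(\SI\) (\dref{Spec-Morse}) finishes the argument. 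I expect the only point requiring care to be precisely this comparison of negative spectra rather than of the full operators; every remaining assertion reduces either to the corresponding statement for \(\mu_3\) or to the general properties of \(\SI\) recorded in \tref{S-APS}.
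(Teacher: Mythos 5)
Your proposal is correct and follows exactly the paper's route: the paper's proof of \tref{mu4-def-thm} consists of the discussion preceding it (meromorphic continuation, \(\mu_\ps(0) = \SI(\cal{F}(\ps))\), smoothness via \tref{S-APS}) together with the remark that scale invariance holds ``by the same argument as for \(\mu_3\)'', which is precisely the argument you spell out — \(g_{\ell^4\ps} = \ell^2 g_\ps\), hence \(\dd^*_{\ell^4\ps} = \ell^{-2}\dd^*_\ps\), so \(\cal{F}(\ell^4\ps)\) and \(\ell^{-2}\cal{F}(\ps)\) differ only by the harmonic projection term and share the same negative spectrum, whence \(\SI\)-scale-invariance finishes it. Your explicit identity \(\cal{F}(\ell^4\ps) = \pi_{harm,\ps} + \ell^{-2}\lt(\cal{F}(\ps) - \pi_{harm,\ps}\rt)\) is a correct and slightly more detailed rendering of what the paper leaves implicit.
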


\section{Computing the eigenvalues of \(\mc{E}(\ph)\) and \(\mc{F}(\ps)\) on Joyce orbifolds}

This is the first of two sections which aim to prove \tref{Joyce-comp-thm}.  Let \(\M_\Ga\) be a Joyce orbifold, let \(\ph\) be a (constant) torsion-free \g\ 3-form on \(\M_\Ga\) and let \(\ps\) denote the corresponding \g\ 4-form.  Recall from \sref{THoCHaCP&tmu3I} that \(\mu_3(\ph)\) is the value at 0 of the meromorphic extension of:
\ew
\bcd[row sep = 0pt]
\mu_\ph:\lt\{s \in \bb{C} ~\m|~ \fr{Re}s > \frac{7}{2} \rt\} \ar[r] &\bb{C}\\
s \ar[r, maps to] & \sum_{\substack{\la\in\Spec(\mc{E}(\ph))\\\la<0}}|\la|^{-s}
\ecd
\eew
where \(\mc{E}(\ph)\) acts on \(\dd^*\Om^3(\M_\Ga) \cap \Om^2_{14}(\M_\Ga)\) via \(-\dd^*\dd\).  Thus the task is to explicitly compute the spectrum of \(\mc{E}(\ph)\).  Since exterior forms on \(\M_\Ga\) are equivalent to \(\Ga\)-invariant exterior forms on \(\bb{T}^7\) and since \(-\dd^*\dd\) is a real operator, by elliptic regularity this is equivalent to computing the spectrum of \(-\dd^*\dd\) acting on the complex Hilbert space:
\ew
\bb{H}^\Ga = \lt(\dd^*H^1\Om^3(\bb{T}^7)_\bb{C} \cap L^2\Om^2_{14}(\bb{T}^7)_\bb{C}\rt)^\Ga,
\eew
where \((-)_\bb{C} = (-) \ts_\bb{R} \bb{C}\), \(L^2\) and \(H^1\) denote Lebesgue and Sobolev spaces of sections, respectively, and \(\lt(-\rt)^\Ga\) denotes the \(\Ga\)-invariant subspace.

To this end, identify \(\lt(\T_0\bb{T}^7\rt)_\bb{C} \cong \lt(\bb{R}^7\rt)_\bb{C}\) and recall that every \(\om \in \ww{\pt}\lt(\bb{R}^7\rt)^*_\bb{C}\) defines a left-invariant, complex exterior form on \(\bb{T}^7\), which I also denote by \(\om\).  This defines a natural embedding \(\ww{\pt}\lt(\bb{R}^7\rt)^*_\bb{C} \emb \Om^\pt(\bb{T}^7)_\bb{C}\) onto the space of constant (equivalently, \(\ph\)-harmonic) complex exterior forms on \(\bb{T}^7\).  Let \(\lt(\bb{Z}^7\rt)^*\) denote the dual lattice of \(\bb{Z}^7\) \wrt\ the metric \(g = g_\ph\) induced by \(\ph\), i.e.:
\ew
\lt(\bb{Z}^7\rt)^* = \lt\{ x \in \bb{R}^7 ~\m|~ \text{for all } y \in \bb{Z}^7, ~ g(x,y) \in \bb{Z} \rt\}
\eew
and, given \(l\in\lt(\bb{Z}^7\rt)^*\), define a smooth, \(\bb{C}\)-valued function \(\ch_l:\bb{T}^7\to\bb{C}\) by:
\ew
\ch_l:\bb{T}^7&\to\bb{C}\\
x + \bb{Z}^7 &\mt e^{2\pi i g(l,x)},
\eew
Define:
\ew
\bb{H}_l = \lt\{ \ch_l\cdot \al ~\m|~ \al \in \ww[14]{2}\lt(\bb{R}^7\rt)^*_\bb{C}  \text{ satisfies }l \hk \al = 0\rt\}
\eew
and finally define:
\ew
\mc{L} = \lt\{ -4\pi^2\|l\|^2_g ~\m|~ l \in \lt(\bb{Z}^7\rt)^* \rt\}.
\eew

\begin{Prop}
For each \(\la \in \mc{L}\osr\{0\}\), define:
\ew
\bb{H}(\la) = \Ds_{l \in \lt(\bb{Z}^7\rt)^* : -4\pi^2\|l\|^2_g = \la} \bb{H}_l.
\eew
Then, there is a decomposition:
\ew
\bb{H} = \ol{\Ds_{\la \in \mc{L}\osr\{0\}} \bb{H}(\la)}
\eew
of \(\bb{H}\) into eigenspaces of \(\mc{E}(\ph)\), where \(\mc{E}(\ph)\) acts on \(\bb{H}(\la)\) via \(\la\Id\).
\end{Prop}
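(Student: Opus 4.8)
The plan is to prove the two halves of the statement separately: first that each \(\bb{H}_l\) lies in \(\bb{H}\) and is an eigenspace of \(\mc{E}(\ph)\) with eigenvalue \(\la = -4\pi^2\|l\|^2_g\), and second that the \(\bb{H}_l\) exhaust \(\bb{H}\), using Fourier analysis on \(\bb{T}^7\).

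For the eigenform computation, fix \(l \in \bb{Z}^7\) and \(\al \in \ww[14]{2}(\bb{R}^7)^*_\bb{C}\) with \(l \hk \al = 0\).  Since \(\al\) is parallel and \(\dd\ch_l = 2\pi i\,\ch_l\,l^\flat\) (writing \(l^\flat = g(l,\1)\)), I would compute \(\dd(\ch_l\al) = 2\pi i\,\ch_l\,(l^\flat \w \al)\); and from the flat-torus identity \(\dd^*(\ch_l\,\om) = -2\pi i\,\ch_l\,(l \hk \om)\), valid for any parallel \(\om\), I would read off \(\dd^*(\ch_l\al) = -2\pi i\,\ch_l\,(l\hk\al) = 0\), so \(\ch_l\al\) is co-closed.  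Applying \(\dd^*\) to \(\dd(\ch_l\al)\) and using \(l \hk (l^\flat \w \al) = \|l\|^2_g\,\al - l^\flat \w (l\hk\al) = \|l\|^2_g\,\al\) then gives \(\mc{E}(\ph)(\ch_l\al) = -\dd^*\dd(\ch_l\al) = -4\pi^2\|l\|^2_g\,\ch_l\al = \la\,\ch_l\al\).  To see \(\bb{H}_l \pc \bb{H}\) for \(l \ne 0\): the factor \(\ch_l\) is scalar and \(\ph\) is constant, so \(\ch_l\al\) has pure pointwise type \(14\), whence \(\ch_l\al \in \Om^2_{14}(\bb{T}^7)_\bb{C}\); and a co-closed form which (for \(l \ne 0\)) is \(L^2\)-orthogonal to the constant harmonic forms \(\cal{H}^2\) is co-exact by the Hodge decomposition, so \(\ch_l\al \in \dd^*\Om^3\).

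The heart of the argument is completeness.  I would begin from the \(L^2\)-dense Fourier decomposition \(L^2\Om^2(\bb{T}^7)_\bb{C} = \ol{\Ds_{l \in \bb{Z}^7} \ch_l \1 \ww{2}(\bb{R}^7)^*_\bb{C}}\), and expand a given \(\be \in \bb{H}\) as \(\be = \sum_l \ch_l\,\om_l\).  Because \(\ph\) is parallel, the pointwise projection \(\pi_{14}\) and the operator \(\dd^*\) act mode-by-mode on this expansion; hence \(\be \in \Om^2_{14}\) forces every \(\om_l \in \ww[14]{2}(\bb{R}^7)^*_\bb{C}\), the condition \(\dd^*\be = 0\) forces \(l \hk \om_l = 0\) for all \(l\), and orthogonality of \(\dd^*\Om^3\) to \(\cal{H}^2\) forces \(\om_0 = 0\).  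Thus each nonzero mode lies in \(\bb{H}_l\), giving \(\be \in \ol{\Ds_{l \ne 0}\bb{H}_l}\); since each \(\bb{H}_l \pc \bb{H}\) and \(\bb{H}\) is closed, this yields \(\bb{H} = \ol{\Ds_{l \ne 0}\bb{H}_l}\).  As \(\{l : -4\pi^2\|l\|^2_g = \la\}\) is finite for each \(\la\), grouping modes by eigenvalue produces the finite-dimensional spaces \(\bb{H}(\la)\); the mutual \(L^2\)-orthogonality of the \(\ch_l\) makes the resulting sum orthogonal, and \(\mc{E}(\ph)\) acts as \(\la\,\Id\) on \(\bb{H}(\la)\) by the first step.

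I expect the main obstacle to be the bookkeeping that shows membership in \(\dd^*\Om^3 \cap \Om^2_{14}\) is detected mode-by-mode, i.e.\ that \(\pi_{14}\), \(\dd\), \(\dd^*\) and the Hodge projection onto \(\cal{H}^2\) all commute with the decomposition of a form into its \(\ch_l\)-components.  This is exactly where the constancy of \(\ph\) — hence of \(g_\ph\) and of the induced type decomposition — is essential; once it is in hand, the three defining conditions of \(\bb{H}\) decouple across Fourier modes and the characterisation \(\bb{H}_l = \lt(\ch_l \1 \ww[14]{2}(\bb{R}^7)^*_\bb{C}\rt) \cap \dd^*\Om^3\) for \(l \ne 0\) follows at once.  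The eigenvalue computation and the orthogonality statements are then routine.
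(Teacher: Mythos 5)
Your proposal is correct and follows essentially the same route as the paper's proof: a Fourier (Peter--Weyl) decomposition of \(L^2\) forms on \(\bb{T}^7\), the observation that the defining conditions of \(\bb{H}\) (type 14, co-closedness, orthogonality to the constant harmonic forms) decouple across Fourier modes because \(\ph\) is parallel, and the identity \(-\dd^*\dd(\ch_l\al) = -4\pi^2\|l\|^2_g\,\ch_l\al\) for \(l \hk \al = 0\). The only differences are cosmetic: you carry out the eigenvalue computation explicitly via \(l \hk (l^\flat \w \al) = \|l\|^2_g\,\al\) where the paper cites a reference, and you start from the full space \(L^2\Om^2(\bb{T}^7)_\bb{C}\) and project, rather than applying Peter--Weyl directly to \(L^2\Om^2_{14}(\bb{T}^7)_\bb{C}\).
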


\begin{proof}
By the Peter--Weyl theorem \cite{DVdPDeGKG}:
\ew
L^2\Om^2_{14}(\bb{T}^7)_\bb{C} = \ol{\Ds_{l \in \lt(\bb{Z}^7\rt)^*} \lt\{\ch_l \al ~\m|~ \al \in \ww[14]{2}\lt(\bb{R}^7\rt)^*_\bb{C}\rt\}}.
\eew
Given \(\al = \sum_{l \in \lt(\bb{Z}^7\rt)^*} \ch_l \al^l \in L^2\Om^2_{14}(\bb{T}^7)_\bb{C}\), observe that:
\ew
\pi_{harm}\al = \al^0 \in \ww[14]{2}\lt(\bb{R}^7\rt)^*_\bb{C}.
\eew
Similarly, using the identity:
\ew
\dd\ch_l = 2\pi i \ch_l l^\flat ,
\eew
where \({}^\flat:\bb{R}^7 \to \lt(\bb{R}^7\rt)^*\) denotes the musical isomorphism induced by \(g\), one computes that for \(\al \in \Om^2_{14}(\bb{T}^7)_\bb{C}\) smooth:
\ew
\dd^*\al = -2\pi i\sum_{l\in\lt(\bb{Z}^7\rt)^*}\ch_l \big(l\hk\al^l\big).
\eew
Since the space \(\bb{H} = \dd^*H^1\Om^3(\bb{T}^7)_\bb{C} \cap L^2\Om^2_{14}(\bb{T}^7)_\bb{C}\) is simply the closure of the space:
\ew
\lt\{ \al \in \Om^2_{14}(\bb{T}^7)_\bb{C} ~\m|~ \pi_{harm}\al = 0 \text{ and } \dd^*\al = 0 \rt\}
\eew
in the \(L^2\)-norm, it follows that:
\ew
\bb{H} &= \ol{\Ds_{l \in \lt.\lt(\bb{Z}^7\rt)^*\m\osr\{0\}\rt.} \bb{H}_l}\\
&= \ol{\Ds_{\la \in \mc{L}\osr\{0\}} \bb{H}(\la)}.
\eew
Thus to complete the proof, it suffices to note that for \(\ch_l \al^l \in \bb{H}_l\):
\ew
-\dd^*\dd(\ch_l\al^l) = -4\pi^2||l||^2_g\ch_l\al^l,
\eew
which follows from \(l \hk \al = 0\) (see \cite[p.\ 363]{SoGH-LOoFT&RS}).  Thus \(\bb{H}(\la)\) is in fact the \(\la\)-eigenspace of \(-\dd^*\dd\), as required.

\end{proof}

Since \(\Ga\) commutes with the action of \(-\dd^*\dd\), it follows that:
\ew
\bb{H}^\Ga = \ol{\Ds_{\la \in \mc{L}\osr\{0\}} \bb{H}(\la)^\Ga}.
\eew
Thus, for \(\fr{Re}(s) > \frac{7}{2}\), one finds that:
\ew
\mu_{\mc{E}(\ph)}(s) = \sum_{\la \in \mc{L}\osr\{0\}} \frac{\dim \bb{H}(\la)^\Ga}{|\la|^s}.
\eew

The calculation for \(\mu_4\) is closely analogous: firstly note that the negative spectrum of \(\mc{F}(\ps)\) is the same as the spectrum of \(-\dd^*\dd\) acting on the space:
\ew
\lt(\bb{H}'\rt)^\Ga = \lt(\dd^*H^1\Om^4(\bb{T}^7)_\bb{C} \cap L^2\Om^3_{27}(\bb{T}^7)_\bb{C}\rt)^\Ga.
\eew
For \(l \in \lt(\bb{Z}^7\rt)^*\) and \(\la \in \mc{L}\), define:
\ew
\bb{H}'_l = \lt\{ \ch_l\cdot \al ~\m|~ \al \in \ww[27]{3}\lt(\bb{R}^7\rt)^*_\bb{C}  \text{ satisfies }l \hk \al = 0\rt\}
\eew
and:
\ew
\bb{H}'(\la) = \Ds_{l \in \lt(\bb{Z}^7\rt)^* : -4\pi^2\|l\|^2_g = \la} \bb{H}'_l.
\eew
Then, as for \(\mu_3\):
\ew
\lt(\bb{H}'\rt)^\Ga = \ol{\Ds_{\la \in \mc{L}\osr\{0\}} \bb{H}'(\la)^\Ga},
\eew
where \(-\dd^*\dd\) acts on each \(\bb{H}'(\la)\) by \(\la\Id\).  It follows that for \(\fr{Re}s > \frac{7}{2}\):
\ew
\mu_{\mc{F}(\ps)}(s) = \sum_{\la \in \mc{L}\osr\{0\}} \frac{\dim \bb{H}'(\la)^\Ga}{|\la|^s}.
\eew
Thus, the computation of \(\mu_3(\ph)\) and \(\mu_4(\ps)\) has been reduced to the representation-theoretic problem of computing \(\dim \bb{H}(\la)^\Ga\) and \(\dim \bb{H}'(\la)^\Ga\) for each \(\la \in \mc{L}\osr\{0\}\).  This will occupy the next section.\\

\section{Multiplicities of the eigenvalues of \(\mc{E}(\ph)\) and \(\mc{F}(\ps)\)}\label{mu-comp-RepT}

Write \(\rh^{(\la)}\) for the (finite-dimensional) representation of \(\Ga\) on \(\bb{H}(\la) = \Ds_{l \in \lt(\bb{Z}^7\rt)^*: -4\pi^2\|l\|_g^2 = \la} \bb{H}_l\).  Recall that the character \(\ch^{(\la)}:\Ga \to \bb{R}\) of \(\rh^{(\la)}\) is defined by:
\ew
\ch^{(\la)}(\mc{A}) = \Tr_{\bb{H}(\la)}(\rh^{(\la)}(\mc{A})), \hs{2mm} \mc{A} \in \Ga.
\eew
The dimension of \(\bb{H}(\la)^\Ga\) can be computed using \(\ch^{(\la)}\) via the formula \cite[eqn.\ (2.9)]{RTAFC}:
\e\label{inv-dim}
\dim \bb{H}(\la)^\Ga = \frac{1}{|\Ga|}\sum_{\mc{A} \in \Ga} \ch^{(\la)}(\mc{A}).
\ee
Thus, the task is to compute the character \(\ch^{(\la)}\).  This is accomplished by the following proposition:
\begin{Prop}\label{mu3-char}
Given \(A \in \End(\bb{R}^7)\), define:
\ew
\Tr_8^{\SU(3)}(A) = \frac{\Tr_{\bb{R}^7}(A)^2 - \Tr_{\bb{R}^7}(A^2)}{2} - 2\Tr_{\bb{R}^7}(A) + 1.
\eew
Moreover, given \(\la \in \mc{L}\osr\{0\}\) and \(\mc{A} = (A,t) \in \SL(7;\bb{Z}) \sdp \bb{T}^7\), define:
\ew
\mc{G}(\la,\mc{A}) = \lt\{ l\in\lt(\bb{Z}^7\rt)^* ~\m|~ -4\pi^2\|l\|^2_g = \la, Al = l \rt\}.
\eew
Then:
\ew
\ch^{(\la)}(\mc{A}) = \sum_{l \in \mc{G}(\la,\mc{A})} e^{2 \pi i g(l, t)} \Tr^{\SU(3)}_8(A).
\eew
In particular, by \eref{inv-dim}:
\ew
\dim \bb{H}(\la)^\Ga = \frac{1}{|\Ga|} \sum_{\mc{A} = (A,t) \in \Ga} \sum_{l \in \mc{G}(\la,\mc{A})} e^{2 \pi i g(l, t)} \Tr^{\SU(3)}_8(A).
\eew
\end{Prop}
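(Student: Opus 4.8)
The plan is to compute the character $\ch^{(\la)}(\mc{A}) = \Tr_{\bb{H}(\la)}\lt(\rh^{(\la)}(\mc{A})\rt)$ by examining how $\mc{A} = (A,t)$ acts on the decomposition $\bb{H}(\la) = \Ds_l \bb{H}_l$, reducing the trace to a sum over the $A$-fixed lattice vectors $l$, and then identifying the trace on each fixed summand with the character of the adjoint representation of $\SU(3)$. Since $\ph = F^*\ph_0$, conjugation by $F$ identifies $(\bb{R}^7, \ph, g)$ with the standard model $(\bb{R}^7, \ph_0, g_0)$ and $A$ with $FAF^{-1} \in \Gg_2 \pc \SO(7)$; as all the quantities below are conjugation-invariant, I may freely use standard facts about the \g-structure.

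First I would compute the action on a typical element $\ch_l \al \in \bb{H}_l$. The automorphism $\mc{A}$ is the map $x \mt Ax + t$, and $\rh^{(\la)}(\mc{A})$ is pullback by $\mc{A}$. Because $A$ is a $g$-isometry (as $FAF^{-1} \in \SO(7)$), one has $g(l, Ax) = g(A^{-1}l, x)$, so $\ch_l(Ax + t) = e^{2\pi i g(l,t)}\ch_{A^{-1}l}(x)$; since $\al$ is a constant form, its pullback is $A^*\al$. Hence
\ew
\rh^{(\la)}(\mc{A})(\ch_l \al) = e^{2\pi i g(l,t)}\,\ch_{A^{-1}l}\,(A^*\al),
\eew
so that $\rh^{(\la)}(\mc{A})$ carries $\bb{H}_l$ into $\bb{H}_{A^{-1}l}$.

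Taking the trace, only the summands with $A^{-1}l = l$, equivalently $Al = l$, contribute; together with the defining constraint $-4\pi^2\|l\|^2_g = \la$ of $\bb{H}(\la)$, these are precisely the vectors $l \in \mc{G}(\la,\mc{A})$. Each contributes the scalar $e^{2\pi i g(l,t)}$ times the trace of the map induced by $A^*$ on the fibre
\ew
V_l = \lt\{\al \in \ww[14]{2}\lt(\bb{R}^7\rt)^*_\bb{C} ~\m|~ l \hk \al = 0\rt\},
\eew
giving $\ch^{(\la)}(\mc{A}) = \sum_{l \in \mc{G}(\la,\mc{A})} e^{2\pi i g(l,t)}\,\Tr_{V_l}(A)$. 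It remains to show $\Tr_{V_l}(A) = \Tr^{\SU(3)}_8(A)$.

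This identification is the step I expect to be the main obstacle. Under the isomorphism $\fr{so}(7) \cong \ww{2}\lt(\bb{R}^7\rt)^*$ sending a skew-adjoint $X$ to the $2$-form $\al_X(u,v) = g(Xu,v)$, the subspace $\ww[14]{2}$ corresponds to $\fr{g}_2$ (see \eref{G2TDEx}), the condition $l \hk \al_X = 0$ becomes $Xl = 0$, and $A^*$ acts as $X \mt A^{-1}XA$. Thus $V_l \cong \{X \in \fr{g}_2 ~|~ Xl = 0\}$ is the Lie algebra of $\Stab_{\Gg_2}(l)$, on which $A$ acts by the adjoint action. Since $\la \ne 0$ forces $l \ne 0$ and \g\ acts transitively on each sphere of $\bb{R}^7$ with stabiliser $\SU(3)$, one has $V_l \cong \fr{su}(3)$, the $8$-dimensional adjoint representation, and $\Tr_{V_l}(A)$ is its character. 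To evaluate it, decompose $\bb{R}^7 = \bb{R}l \ds l^\bot$ with $l^\bot \cong \bb{C}^3$ the standard $\SU(3)$-module, so that $A$ has eigenvalues $1, z_1, z_2, z_3, \ol{z}_1, \ol{z}_2, \ol{z}_3$ with $z_1z_2z_3 = 1$. Writing $s = z_1 + z_2 + z_3$, the adjoint character equals $|s|^2 - 1$. Finally, using $\Tr_{\bb{R}^7}(A) = 1 + s + \ol{s}$ and $\Tr_{\bb{R}^7}(A^2) = 1 + (s^2 - 2\ol{s}) + (\ol{s}^2 - 2s)$ — the latter via $\sum_{i<j}z_iz_j = \ol{s}$, valid since $\prod_j z_j = 1$ — a direct substitution into the defining formula for $\Tr^{\SU(3)}_8$ gives $\Tr^{\SU(3)}_8(A) = |s|^2 - 1 = \Tr_{V_l}(A)$. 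The character formula follows, and the expression for $\dim\bb{H}(\la)^\Ga$ is then immediate from \eref{inv-dim}.
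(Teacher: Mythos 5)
Your proposal is correct, and its first half (the reduction of \(\ch^{(\la)}\) to a sum over \(A\)-fixed lattice vectors via the block decomposition \(\bb{H}(\la) = \Ds_l \bb{H}_l\) and the action formula \(\mc{A}^*(\ch_l\al) = e^{2\pi i g(l,t)}\ch_{A^{-1}l}A^*\al\)) coincides with the paper's \lref{step-1-decomp}. Where you genuinely diverge is the key trace computation, i.e.\ the paper's \lref{mu3-traces}: the paper identifies the fibre \(\bb{A}_l = \lt\{\al \in \ww[14]{2}\lt(\bb{R}^7\rt)^* \m| l \hk \al = 0\rt\}\) with the space \(\lt[\ww[8]{1,1}\bb{B}^*\rt]\) of primitive \((1,1)\)-forms on \(\bb{B} = \<l\?^\bot\) via Donaldson's explicit decomposition of \(\ww[14]{2}\lt(\bb{R}^7\rt)^*\), and then extracts the trace by successively peeling off \(\GL\)-submodules (\(\ww{2} = \ww[7]{2}\ds\ww[14]{2}\), then \(\ww[14]{2} \cong \bb{B} \ds \lt[\ww[8]{1,1}\bb{B}^*\rt]\)), so that the answer emerges directly as a polynomial in \(\Tr(A)\) and \(\Tr(A^2)\) with no reference to eigenvalues. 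You instead use the isomorphism \(\ww[14]{2}\lt(\bb{R}^7\rt)^* \cong \fr{g}_2\) of \eref{G2TDEx} to recognise the fibre as \(\lt\{X \in \fr{g}_2 \m| Xl = 0\rt\} \cong \fr{su}(3)\), the adjoint representation of \(\Stab_{\Gg_2}(l) \cong \SU(3)\), acted on by conjugation, and then evaluate the adjoint character as \(|s|^2 - 1\) via the eigenvalues \(1, z_i, \ol{z}_i\) (your algebra here checks out: \(\sum_{i<j}z_iz_j = \ol{s}\) from \(\prod z_i = 1\), and substitution does give \(\Tr_8^{\SU(3)}(A) = |s|^2 - 1\)). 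Your route is more conceptual — it explains why the answer is the character of the \(8\)-dimensional adjoint representation, which is exactly what the notation \(\Tr_8^{\SU(3)}\) is advertising — whereas the paper's route has the advantage of transferring essentially verbatim to the \(\mu_4\) computation, where the relevant fibre \(\ls\ww[12]{2,1}\bb{B}^*\rs \pc \ww[27]{3}\lt(\bb{R}^7\rt)^*\) is \emph{not} an adjoint representation, so your Lie-algebra identification would have no analogue there and an eigenvalue computation would have to be done from scratch.
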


The proof proceeds by a series of lemmas:

\begin{Lem}\label{step-1-decomp}
For each \(\bb{H}_l \pc \bb{H}(\la)\), define a representation \(\rh_l\) of \(\Ga\) on \(\bb{H}_l\) via:
\ew
\rh_l(\mc{A})[u] = \proj_{\bb{H}_l} \lt\{\rh^{(\la)}(\mc{A})[u]\rt\}, \hs{2mm}  \mc{A} \in \Ga, u \in \bb{H}_l,
\eew
where \(\proj_{\bb{H}_l}\) denotes the projection \(\bb{H}(\la) = \Ds_{l' \in \lt(\bb{Z}^7\rt)^*: -4\pi^2\|l'\|_g^2 = \la} \bb{H}_{l'} \to \bb{H}_l\), and write \(\ch_l\) for the corresponding character.  Then for each \(\mc{A} \in \Ga\):
\ew
\ch^{(\la)}(\mc{A}) = \sum_{l\in\mc{G}(\la,\mc{A})} \ch_l(\mc{A}).
\eew
\end{Lem}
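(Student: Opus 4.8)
The plan is to exploit the fact that \(\Ga\) acts on the finite-dimensional space \(\bb{H}(\la)\) merely by \emph{permuting} the summands \(\bb{H}_l\), so that the character \(\ch^{(\la)}\) collapses to a sum of traces of the diagonal blocks. First I would make the \(\Ga\)-action explicit. An element \(\mc{A} = (A,t) \in \Ga\) acts on forms on \(\bb{T}^7\) by (the inverse of) pullback along the affine map \(x \mt Ax + t\). Since \(\Ga\) preserves \(\ph\), the linear part \(A\) satisfies \(A^*\ph = \ph\); in particular \(A\) is an isometry of \(g = g_\ph\) \emph{and} preserves the type-decomposition, so it maps \(\ww[14]{2}\lt(\bb{R}^7\rt)^*_\bb{C}\) to itself. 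Combining the identity \(\ch_l(Ax+t) = e^{2\pi i g(l,t)}\ch_{A^{-1}l}(x)\) (which uses \(g(l,Ax) = g(A^{-1}l,x)\)) with \(\ph\)-invariance of the form factor, one checks that \(\rh^{(\la)}(\mc{A})\) carries \(\bb{H}_l\) isomorphically onto \(\bb{H}_{A^{-1}l}\). Because \(A\) is a \(g\)-isometry, \(\|A^{-1}l\|_g = \|l\|_g\), so the image stays inside \(\bb{H}(\la)\); thus \(\Ga\) genuinely permutes the blocks \(\{\bb{H}_l : -4\pi^2\|l\|_g^2 = \la\}\).

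Next I would compute the trace block by block. With respect to the decomposition \(\bb{H}(\la) = \Ds_l \bb{H}_l\), the previous step shows that the \((l,l)\)-diagonal block of \(\rh^{(\la)}(\mc{A})\) is nonzero only when \(A^{-1}l = l\), equivalently \(Al = l\), i.e.\ precisely when \(l \in \mc{G}(\la,\mc{A})\). Since the trace of a block matrix is the sum of the traces of its diagonal blocks, I obtain
\[
\ch^{(\la)}(\mc{A}) = \Tr_{\bb{H}(\la)}\big(\rh^{(\la)}(\mc{A})\big) = \sum_{l\,:\,Al=l} \Tr_{\bb{H}_l}\big(\proj_{\bb{H}_l} \circ \rh^{(\la)}(\mc{A})|_{\bb{H}_l}\big) = \sum_{l \in \mc{G}(\la,\mc{A})} \ch_l(\mc{A}),
\]
where the last equality is just the definition of \(\rh_l\) and its character \(\ch_l\).

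The only point genuinely requiring care is the bookkeeping of the \(\Ga\)-action: one must confirm that \(A\) is a \(g\)-isometry (so that blocks are permuted rather than mixed with summands of a different eigenvalue) and pin down the induced index permutation as \(l \mt A^{-1}l\) (or \(l \mt Al\), under the opposite left/right convention), so that the diagonal blocks are indexed exactly by the fixed set \(\{l : Al = l\}\). I should also flag that \(\rh_l\) is strictly a representation only of the stabiliser \(\Ga_l = \{\mc{A} \in \Ga : Al = l\}\): for \(\mc{A} \notin \Ga_l\) one has \(\proj_{\bb{H}_l}\circ\rh^{(\la)}(\mc{A})|_{\bb{H}_l} = 0\), hence \(\ch_l(\mc{A}) = 0\), but such \(l\) are excluded from \(\mc{G}(\la,\mc{A})\), and for \(l \in \mc{G}(\la,\mc{A})\) the diagonal block coincides with the honest action of \(\mc{A}\) on \(\bb{H}_l\), so no difficulty arises. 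The substantive work — actually evaluating the individual characters \(\ch_l(\mc{A})\), where the representation theory of \(\SU(3)\) and the factor \(\Tr^{\SU(3)}_8(A)\) enter — is deferred to the subsequent lemmas.
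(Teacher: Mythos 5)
Your proposal is correct and follows essentially the same route as the paper: both arguments write \(\rh^{(\la)}(\mc{A})\) as a block matrix with respect to \(\bb{H}(\la) = \Ds_l \bb{H}_l\), use the explicit pullback formula to show that \(\mc{A}\) sends \(\bb{H}_l\) into \(\bb{H}_{A^Tl} = \bb{H}_{A^{-1}l}\) (so the diagonal block vanishes unless \(Al = l\), using that \(A\) preserves \(g\)), and then sum the traces of the diagonal blocks. Your additional remark that the compression \(\proj_{\bb{H}_l}\circ\rh^{(\la)}(\mc{A})|_{\bb{H}_l}\) is an honest representation only of the stabiliser of \(l\) is a fair clarification of the paper's terminology, but it does not change the argument.
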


\begin{proof}
For all \(\mc{A} \in \Ga\), the linear map \(\rh^{(\la)}(\mc{A})\) is represented by the block matrix:
\ew
\bpm
\rh_{l_1}(\mc{A}) & * & \hdots\\
* & \rh_{l_2}(\mc{A}) & \\
\vdots & & \ddots
\epm
\eew
where \(\bb{H}(\la) = \bb{H}_{l_1} \ds \bb{H}_{l_2} \ds ...\).  In particular:
\ew
\ch^{(\la)} = \sum_{l \in \lt(\bb{Z}^7\rt)^*: -4\pi^2\|l\|_g^2 = \la} \ch_l.
\eew
Now given \(\mc{A} = (A,t) \in \Ga \pc \SL(7;\bb{Z}) \x \bb{T}^7\), for \(\ch_l \al^l = e^{2\pi i g(l,x)}\al^l \in \bb{H}_l\):
\e\label{3act}
\mc{A}^*(e^{2\pi i g(l,x)} \al^l) = e^{2\pi i g(l,t)}e^{2\pi i g(A^Tl,x)}A^*\al^l \in \bb{H}_{A^Tl},
\ee
where \(A^T\) denotes the adjoint of \(A\) \wrt\ \(g\).  Thus \(\rh_l(\mc{A})\) is non-zero \iff\ \(A^Tl = l\), which is equivalent to \(Al = l\) (since \(A\) preserves \(g\)).  This completes the proof.

\end{proof}

\begin{Lem}\label{mu3-traces}
Given \(l \in \mc{G}(\la,\mc{A})\), define:
\ew
\bb{A}_l = \lt\{ \al \in \ww[14]{2}\lt(\bb{R}^7\rt) ~\m|~ l \hk \al = 0\rt\}.
\eew
Then the trace \(\Tr_{\lt(\bb{A}_l\rt)_\bb{C}}(A)\) of \(A = \fr{p}_1(\mc{A})\) acting on \(\lt(\bb{A}_l\rt)_\bb{C}\) via pullback is:
\ew
\Tr_{(\bb{A}_l)_\bb{C}}(A) = \Tr_8^{\SU(3)}(A) = \frac{\Tr_{\bb{R}^7}(A)^2 - \Tr_{\bb{R}^7}(A^2)}{2} - 2\Tr_{\bb{R}^7}(A) + 1.
\eew
\end{Lem}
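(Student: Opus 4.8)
The plan is to identify \(\bb{A}_l\) as an explicit \(\SU(3)\)-representation and then read off its character directly. First I would note that, since \(\mc{A} = (A,t) \in \Ga\) preserves the torsion-free \g\ 3-form \(\ph\), the matrix \(A = \fr{p}_1(\mc{A})\) lies in \(\Gg_2 = \Stab_{\GL_+(7;\bb{R})}(\ph)\) and in particular preserves the induced metric \(g = g_\ph\); moreover \(l \in \mc{G}(\la,\mc{A})\) satisfies \(Al = l\) with \(l \ne 0\) (as \(\la \ne 0\)), so \(A\) lies in \(\Stab_{\Gg_2}(l)\). The key structural input is the classical fact that \(\Gg_2\) acts transitively on the unit sphere in \(\bb{R}^7\) with stabiliser \(\SU(3)\); hence \(\Stab_{\Gg_2}(l) \cong \SU(3)\), acting on \(l^\perp \cong \bb{C}^3\) as the standard representation. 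Complexifying gives the branching \((\bb{R}^7)_\bb{C} \cong \bb{C} \ds \bb{C}^3 \ds \ol{\bb{C}^3}\) of \(\SU(3)\)-modules, the trivial summand being spanned by \(l\).

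Next I would pin down the module \(\bb{A}_l\). Using \(g\) to identify \(2\)-forms with skew-symmetric endomorphisms of \(\bb{R}^7\), a form \(\al \in \ww[14]{2}(\bb{R}^7) \cong \fr{g}_2\) satisfies \(l \hk \al = 0\) \iff\ the corresponding endomorphism annihilates \(l\); but the elements of \(\fr{g}_2\) annihilating \(l\) are precisely the Lie algebra of \(\Stab_{\Gg_2}(l)\). Hence \(\bb{A}_l \cong \fr{su}(3)\) as a representation of \(\Stab_{\Gg_2}(l) \cong \SU(3)\), namely the adjoint representation (here the pullback action on forms corresponds, up to inversion, to the adjoint action, which does not affect traces since the adjoint representation is real). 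A dimension count (\(\dim\fr{su}(3) = 8\)) confirms that this exhausts the \(8\)-dimensional summand in the branching \(\fr{g}_2 = \fr{su}(3) \ds \fr{m}\), the \(6\)-dimensional complement \(\fr{m}\) consisting of those forms with \(l \hk \al \ne 0\).

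Given this identification, \(\Tr_{(\bb{A}_l)_\bb{C}}(A)\) is the character of \(A \in \SU(3)\) on the complexified adjoint representation \(\fr{sl}(3;\bb{C})\), which equals \(|z|^2 - 1\), where \(z\) denotes the trace of \(A\) on the standard representation \(\bb{C}^3\) (using \(\fr{gl}(3;\bb{C}) = \bb{C}^3 \ts (\bb{C}^3)^*\) to get \(|z|^2\), then removing the trace part). It then remains to re-express this in terms of \(\Tr_{\bb{R}^7}\). From the branching one reads off \(\Tr_{\bb{R}^7}(A) = 1 + z + \ol{z}\) and \(\Tr_{\bb{R}^7}(A^2) = 1 + \Tr_{\bb{C}^3}(A^2) + \ol{\Tr_{\bb{C}^3}(A^2)}\); since \(A\) has determinant \(1\) on \(\bb{C}^3\), the Newton identities specialised via \(e_2 = \ol{z}\) and \(e_3 = 1\) give \(\Tr_{\bb{C}^3}(A^2) = z^2 - 2\ol{z}\). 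Substituting these relations into \(\tfrac{1}{2}\lt(\Tr_{\bb{R}^7}(A)^2 - \Tr_{\bb{R}^7}(A^2)\rt) - 2\Tr_{\bb{R}^7}(A) + 1\) and simplifying collapses it to \(z\ol{z} - 1 = |z|^2 - 1\), as required.

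The main obstacle is the precise identification \(\bb{A}_l \cong \fr{su}(3)\): one must carefully translate the contraction condition \(l \hk \al = 0\) into the vanishing of the associated skew endomorphism on \(l\), and invoke the \(\SU(3)\)-branching of \(\fr{g}_2\) to verify that these forms constitute exactly the adjoint summand and not some other \(8\)-dimensional subspace. Once the module structure is settled, the remaining character computation is a routine application of the Newton identities for \(\SU(3)\).
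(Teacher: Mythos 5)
Your proof is correct, but it takes a genuinely different route from the paper's at both key steps, even though the skeleton --- realise \(\bb{A}_l\) as the \(8\)-dimensional irreducible module of \(\Stab_{\Gg_2}(l)\cong\SU(3)\) and compute its character --- is the same. For the module identification, the paper splits \(\bb{R}^7 = \<l\?\ds\bb{B}\) and invokes Donaldson's explicit decomposition of \(\ww[14]{2}\lt(\bb{R}^7\rt)^*\) relative to this splitting to conclude \(\bb{A}_l = \lt[\ww[8]{1,1}\bb{B}^*\rt]\), the primitive \((1,1)\)-forms on \(\bb{B}\); you instead use the identification \(\ww[14]{2}\lt(\bb{R}^7\rt)^*\cong\fr{g}_2\) inside \(\fr{so}(7)\) and characterise \(\bb{A}_l\) as the infinitesimal stabiliser \(\fr{su}(3)\), which is cleaner and explains conceptually why the answer is an adjoint character; your translation of \(l\hk\al = 0\) into the vanishing of the associated skew endomorphism on \(l\) is sound, as is the observation that the inversion implicit in pullback does not affect the (real) character. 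For the trace itself, the paper stays entirely real, subtracting the traces of the complementary summands \(\bb{R}^7\) and \(\bb{B}\) from \(\Tr_{\ww{2}\lt(\bb{R}^7\rt)^*}(A) = \tfrac{1}{2}\lt(\Tr(A)^2 - \Tr(A^2)\rt)\); you complexify, use the branching \(\lt(\bb{R}^7\rt)_\bb{C}\cong\bb{C}\ds\bb{C}^3\ds\ol{\bb{C}^3}\), the adjoint character \(|z|^2-1\) and Newton's identities with \(e_2 = \ol{z}\), \(e_3 = 1\); I have checked that your algebra does collapse to \(z\ol{z}-1\), matching \(\Tr^{\SU(3)}_8(A)\). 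What the paper's subtraction method buys is uniformity: the identical scheme handles the companion lemma for \(\Tr^{\SU(3)}_{12}\) on \(\ww[27]{3}\lt(\bb{R}^7\rt)^*\cong\ss[0]{2}\lt(\bb{R}^7\rt)^*\), where your Lie-algebra characterisation has no direct analogue (that module is not a subalgebra of \(\fr{so}(7)\)), so a separate identification would be required there. What your method buys is independence from Donaldson's lemma and a structural, self-contained character computation on the \(\SU(3)\) side.
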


\begin{proof}
Firstly, note that, since complexification does not affect the trace of a real operator, it is equivalent to compute the trace of \(A\) acting on \(\bb{A}_l\).  Identify \(\Stab_{\GL_+(7;\bb{R})}(\ph)\) with the group \g\ and recall that \(\Stab_{\Gg_2}(l) \cong \SU(3)\) \cite[Prop.\ 2.7]{A3F&ESLGoTG2}.  Let \(\bb{B} = \<l\?^\bot\) with its natural orientation, let \(\th \in \lt(\bb{R}^7\rt)^*\) be a correctly oriented annihilator of \(\bb{B}\) and, using the splitting \(\bb{R}^7 = \<l\? \ds \bb{B}\), write:
\ew
\ph = \th \w \om + \rh.
\eew
Since \(\SU(3) \pc \GL(3;\bb{C})\), \(\bb{B}\) inherits a natural complex structure \(J\) with respect to which \(\om\) is a positive \((1,1)\)-form on \(\bb{B}\) \cite{AEBVPfG2S} and there is an \(\SU(3)\)-invariant decomposition:
\ew
\ww{2}\bb{B}^* = \bb{R}\cdot\om \ds \lt[\ww[8]{1,1}\bb{B}^*\rt] \ds \ls\ww{2,0}\bb{B}^*\rs,
\eew
where \(\lt[\ww[8]{1,1}\bb{B}^*\rt]\) is the orthocomplement to \(\bb{R}\cdot\om\) in \(\lt[\ww{1,1}\bb{B}^*\rt]\) and \(\ls\ww{2,0}\bb{B}^*\rs = \lt\{u \hk \rh ~\m|~ u \in \bb{B}\rt\} \cong \bb{B}\).  (Here, I use the notation from \cite[p.\ 32]{RG&HG} that, for \(r \ne s\), \(\ls\ww{r,s}\lt(\bb{R}^6\rt)^*\rs = \lt(\ww{r,s}\lt(\bb{R}^6\rt)^* \ds \ww{s,r}\lt(\bb{R}^6\rt)^*\rt) \cap \ww{r+s}\lt(\bb{R}^6\rt)^*\) is the set of real forms of type \((r,s)+(s,r)\) and \(\lt[\ww{r,r}\lt(\bb{R}^6\rt)^*\rt] = \ww{r,r}\lt(\bb{R}^6\rt)^* \cap \ww{2r}\lt(\bb{R}^6\rt)^*\) is the set of real forms of type \((r,r)\).)  Define an isomorphism:
\ew
\ch_6: \bb{B}^* &\to \ls\ww{2,0}\bb{B}^*\rs\\
v\hk\om &\mt v\hk\rh.
\eew
Then by \cite[Lem.\ 1]{AEBVPfG2S}:
\ew
\ww[14]{2}\lt(\bb{R}^7\rt)^* = \lt[\ww[8]{1,1}\bb{B}^*\rt] \ds \lt\{2 \th \w \al + \ch_6(\al) ~\m|~\al \in \bb{B}^*\rt\}.
\eew
In particular:
\ew
\bb{A}_l = \lt\{ \al \in \ww[14]{2}\lt(\bb{R}^7\rt) ~\m|~ l \hk \al = 0\rt\} = \lt[\ww[8]{1,1}\bb{B}^*\rt]
\eew
and thus \(\Tr_{\bb{A}_l}(A)\) is simply the trace of \(A\) acting on the space \(\lt[\ww[8]{1,1}\bb{B}^*\rt]\).

Using \cite[Prop.\ 2.1]{RTAFC}, the trace of \(A\) acting on \(\ww{2}\lt(\bb{R}^7\rt)^*\) is:
\ew
\Tr_{\ww{2}\lt(\bb{R}^7\rt)^*}(A) = \frac{\Tr_{\bb{R}^7}(A)^2 - \Tr_{\bb{R}^7}(A^2)}{2}.
\eew
Hence, using \(\ww{2}\lt(\bb{R}^7\rt)^* = \ww[7]{2}\lt(\bb{R}^7\rt)^* \ds \ww[14]{2}\lt(\bb{R}^7\rt)^* \cong \bb{R}^7 \ds \ww[14]{2}\lt(\bb{R}^7\rt)^*\), one finds that:
\ew
\Tr_{\ww[14]{2}\lt(\bb{R}^7\rt)^*}(A) = \frac{\Tr_{\bb{R}^7}(A)^2 - \Tr_{\bb{R}^7}(A^2)}{2} - \Tr_{\bb{R}^7}(A).
\eew
Next, note that \(\Tr_\bb{B}(A) = \Tr_{\bb{R}^7}(A) - 1\), since \(Al = l\).  Thus \(\ww[14]{2}\lt(\bb{R}^7\rt)^* \cong \bb{B} \ds \lt[\ww[8]{1,1}\bb{B}^*\rt]\), yields:
\ew
\Tr_{\lt[\ww[8]{1,1}\bb{B}^*\rt]}(A) = \Tr_{\ww[14]{2}\lt(\bb{R}^7\rt)^*}(A) - \Tr_{\bb{R}^7}(A) + 1 = \frac{\Tr_{\bb{R}^7}(A)^2 - \Tr_{\bb{R}^7}(A^2)}{2} - 2\Tr_{\bb{R}^7}(A) + 1,
\eew
as required.

\end{proof}

\begin{proof}[Proof of \pref{mu3-char}]
By \eref{3act}, it follows that for \(\mc{A} = (A,t) \in \Ga\), \(l \in \mc{G}(\la,\mc{A})\):
\ew
\ch_l(\mc{A}) = e^{2 \pi i g(l,t)} \Tr_{\lt(\bb{A}_l\rt)_\bb{C}}(A) = e^{2 \pi i g(l,t)} \Tr_8^{\SU(3)}(A)
\eew
The result now follows from \lref{step-1-decomp}.

\end{proof}

Using \pref{mu3-char}, it follows that for all \(\fr{Re}(s) > \frac{7}{2}\):
\e\label{initial-sum-3}
\mu_{\mc{E}(\ph)}(s) = \sum_{\la \in \mc{L}\osr\{0\}} \frac{1}{|\la|^s} \lt(\frac{1}{|\Ga|} \sum_{\mc{A} = (A,t) \in \Ga} \sum_{l \in \mc{G}(\la,\mc{A})} e^{2 \pi i g(l, t)} \Tr^{\SU(3)}_8(A)\rt).
\ee
Write \(\mc{G}(\mc{A}) = \lt\{ l \in \lt(\bb{Z}^7\rt)^* ~\m|~ Al = l\rt\}\), a lattice in the 1-eigenspace of \(A\). Note that \(\mc{G}(\mc{A})\) is non-zero, having rank equal to the \(\bb{Q}\)-dimension of the 1-eigenspace of \(A\) viewed as a linear map over \(\bb{Q}\) (recall that \(A \in \SL(7;\bb{Z})\) has integer entries), which is equal to the \(\bb{R}\)-dimension of the 1-eigenspace of \(A\) viewed as a linear map over \(\bb{R}\), which is itself non-zero since \(A\) is orientation-preserving and preserves the metric \(g\) (and the dimension of \(\bb{R}^7\) is odd).  Thus, by rearranging \eref{initial-sum-3}, one finds:
\ew
\mu_{\mc{E}(\ph)}(s) = \frac{1}{(2\pi)^{2s}|\Ga|} \sum_{\mc{A} = (A,t) \in \Ga} \Tr^{\SU(3)}_8(A) \lt(\sum_{l \in \mc{G}(\mc{A})\osr\{0\}} \frac{e^{2 \pi i g(l,t)}}{\|l\|_g^{2s}}\rt).
\eew
The sum:
\ew
\sum_{l \in \mc{G}(\mc{A})\osr\{0\}} \frac{e^{2 \pi i g(l,t)}}{\|l\|_g^{2s}}
\eew
is an example of an Epstein \(\ze\)-function, and hence the value at \(s = 0\) of its meromorphic extension to \(\bb{C}\) is always \(-1\), independent of \(t\) or the rank of the lattice \cite[p.\ 627]{ZTAZI}.  Thus:
\ew
\mu_3(\M_\Ga,\ph) = \frac{-1}{|\Ga|} \sum_{\mc{A} = (A,t) \in \Ga} \Tr^{\SU(3)}_8(A);
\eew
in particular, this formula is independent of \(\ph\).  Thus it has been established:
\begin{Thm}\label{mu3-J-thm}
Let \(\M_\Ga = \lqt{\bb{T}^7}{\Ga}\) be a Joyce orbifold.  The map \(\mu_3:\cal{G}^{TF}_2(\M_\Ga) \to \bb{R}\) is constant, taking the value:
\ew
\mu_3(\M_\Ga) = \frac{-1}{|\Ga|} \sum_{\mc{A} = (A,t) \in \Ga} \Tr^{\SU(3)}_8(A).
\eew
\end{Thm}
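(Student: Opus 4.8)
The plan is to assemble the three ingredients already established: the eigenspace decomposition of \(\mc{E}(\ph) = -\dd^*\dd\) on \(\bb{H}^\Ga\) from the previous section, the dimension formula in \pref{mu3-char}, and the special value at \(s = 0\) of the twisted Epstein \(\ze\)-function. The key simplifying observation is that every nonzero eigenvalue of \(\mc{E}(\ph)\) on \(\bb{H}^\Ga\) is negative, since these eigenvalues are precisely the numbers \(\la \in \mc{L}\osr\{0\}\), all of the form \(-4\pi^2\|l\|_g^2 < 0\). Hence, for \(\fr{Re}(s) > \frac{7}{2}\), the spectral Morse function is the full sum
\ew
\mu_{\mc{E}(\ph)}(s) = \sum_{\la \in \mc{L}\osr\{0\}} \frac{\dim \bb{H}(\la)^\Ga}{|\la|^s},
\eew
into which I would substitute the expression for \(\dim\bb{H}(\la)^\Ga\) from \pref{mu3-char}, obtaining a triple sum indexed by \(\la\), by \(\mc{A} = (A,t) \in \Ga\), and by \(l \in \mc{G}(\la,\mc{A})\).

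Next I would move the finite sum over \(\Ga\) to the outside and, for each fixed \(\mc{A} = (A,t)\), re-index the remaining double sum over \(\la\) and \(l \in \mc{G}(\la,\mc{A})\) as a single sum over \(l \in \mc{G}(\mc{A})\osr\{0\}\), where \(\mc{G}(\mc{A}) = \{l \in \bb{Z}^7 : Al = l\}\) is the lattice of fixed points of \(A\) (non-zero because \(A\) is orientation-preserving on the odd-dimensional space \(\bb{R}^7\)). Writing \(|\la| = (2\pi)^2\|l\|_g^2\) and pulling out the factor \((2\pi)^{-2s}\), this collapses the triple sum to
\ew
\mu_{\mc{E}(\ph)}(s) = \frac{1}{(2\pi)^{2s}|\Ga|} \sum_{\mc{A} = (A,t) \in \Ga} \Tr^{\SU(3)}_8(A) \lt(\sum_{l \in \mc{G}(\mc{A})\osr\{0\}} \frac{e^{2\pi i g(l,t)}}{\|l\|_g^{2s}}\rt),
\eew
in which the inner sum is recognisably a twisted Epstein \(\ze\)-function attached to the lattice \(\mc{G}(\mc{A})\) with twist \(t\).

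Then I would invoke the analytic continuation of the twisted Epstein \(\ze\)-function \cite[p.\ 627]{ZTAZI}: its meromorphic extension is holomorphic at \(s = 0\) and takes the value \(-1\) there, independently of the twist \(t\) and of the rank of the lattice. Since the outer sum over \(\Ga\) is finite, the meromorphic continuation of \(\mu_{\mc{E}(\ph)}\) is obtained by continuing each summand separately; evaluating at \(s = 0\), where moreover \((2\pi)^{-2s} \to 1\), then gives
\ew
\mu_3(\M_\Ga,\ph) = \frac{-1}{|\Ga|} \sum_{\mc{A} = (A,t) \in \Ga} \Tr^{\SU(3)}_8(A).
\eew
As the right-hand side depends only on \(\Ga\) and not on the metric \(g_\ph\) nor on the chosen \(\ph\), the map \(\mu_3\) is constant on \(\cal{G}^{TF}_2(\M_\Ga)\), which is exactly the assertion of the theorem.

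The main obstacle is the justification underlying the third step. One must check that the rearrangement and the term-by-term analytic continuation are legitimate: on \(\fr{Re}(s) > \frac{7}{2}\) every Epstein \(\ze\)-summand converges absolutely (each lattice \(\mc{G}(\mc{A})\) has rank at most \(7\), so the threshold \(\frac{7}{2}\) is uniform), and the interchange with the finite \(\Ga\)-sum is then unproblematic. The genuinely non-trivial input is that the special value \(-1\) at \(s = 0\) persists for \emph{twisted} lattice sums of arbitrary, possibly deficient, rank rather than only for the full-rank case; granting this Epstein \(\ze\)-function fact, the remaining steps are routine bookkeeping.
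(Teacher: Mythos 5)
Your proposal reproduces the paper's own proof essentially step for step: the same substitution of \pref{mu3-char} into the Dirichlet series \(\mu_{\mc{E}(\ph)}(s) = \sum_{\la \in \mc{L}\osr\{0\}} \dim \bb{H}(\la)^\Ga\, |\la|^{-s}\), the same rearrangement into a finite \(\Ga\)-indexed sum of twisted Epstein \(\ze\)-functions over the fixed lattices \(\mc{G}(\mc{A})\), and the same appeal to the Epstein special value \(-1\) at \(s = 0\), independent of the twist \(t\) and of the rank of the lattice \cite{ZTAZI}. The bookkeeping you flag (absolute convergence for \(\fr{Re}(s) > \frac{7}{2}\), term-by-term continuation of a finite sum, \((2\pi)^{-2s} \to 1\)) is handled correctly, and the concluding observation that the resulting formula is independent of \(\ph\) is exactly how the paper deduces constancy on \(\cal{G}^{TF}_2(\M_\Ga)\).

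The one genuine defect is your justification that \(\mc{G}(\mc{A}) = \lt\{l \in \bb{Z}^7 \,\m|\, Al = l\rt\}\) is non-zero. This point is not cosmetic: if \(\mc{G}(\mc{A})\) were zero for some \(\mc{A}\), the corresponding inner sum would be empty, its value at \(s = 0\) would be \(0\) rather than \(-1\), and that term would drop out of the final formula—so the theorem as stated needs every \(\mc{G}(\mc{A})\) to be non-zero. Your parenthetical argument (``\(A\) is orientation-preserving on the odd-dimensional space \(\bb{R}^7\)'') falls short on two counts. First, orientation-preservation alone does not force \(1\) to be an eigenvalue (consider \(\diag(2,2,\tfrac{1}{4}) \in \GL_+(3;\bb{R})\)); one also needs that \(A\) preserves the metric \(g_\ph\) (true here because \(FAF^{-1} \in \Gg_2 \pc \SO(7)\)), so that \(A\) is special orthogonal on an odd-dimensional space and hence fixes a non-zero \emph{real} vector. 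Second, and more seriously, a non-zero real fixed subspace need not contain any non-zero lattice point—compare the eigenlines of a hyperbolic element of \(\SL(2;\bb{Z})\), which are irrational and meet \(\bb{Z}^2\) only at the origin. The paper closes this gap using integrality: since \(A - \Id\) has integer entries and \(\det(A - \Id) = 0\), its kernel is a non-zero subspace of \(\bb{Q}^7\), and clearing denominators in a non-zero rational fixed vector produces an element of \(\mc{G}(\mc{A})\osr\{0\}\). With that one repair your argument is complete and coincides with the paper's.
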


Now consider \(\mu_4\).  All of the above analysis is easily adapted to the case of \(\mu_4\) except \lref{mu3-traces}, which must be replaced with the following result:
\begin{Lem}
Given \(l \in \mc{G}(\la,\mc{A})\), define:
\ew
\bb{A}'_l = \lt\{ \al \in \ww[27]{3}\lt(\bb{R}^7\rt)_\bb{C} ~\m|~ l \hk \al = 0\rt\}.
\eew
Then the trace \(\Tr_{\lt(\bb{A}'_l\rt)_\bb{C}}(A)\) of \(A = \fr{p}_1(\mc{A})\) acting on \(\lt(\bb{A}'_l\rt)_\bb{C}\) via pullback is:
\ew
\Tr_{(\bb{A}'_l)_\bb{C}}(A) = \Tr_{12}^{\SU(3)}(A) = \frac{\Tr_{\bb{R}^7}(A)^3 + 2\Tr_{\bb{R}^7}(A^3) - 3\Tr_{\bb{R}^7}(A^2)\Tr_{\bb{R}^7}(A)}{6} - \frac{\Tr_{\bb{R}^7}(A)^2 - \Tr_{\bb{R}^7}(A^2)}{2} - 2.
\eew
\end{Lem}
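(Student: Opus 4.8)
The plan is to follow the proof of \lref{mu3-traces} almost verbatim, replacing the $\SU(3)$-decomposition of two-forms by that of three-forms on $\bb{B} = \<l\?^\bot \cong \bb{C}^3$. As there, identify $\Stab_{\Gg_2}(l) \cong \SU(3)$, let $\th\in\lt(\bb{R}^7\rt)^*$ be a correctly-oriented annihilator of $\bb{B}$, and write
\ew
\ph = \th\w\om + \rh, \hs{5mm} \ps = \Hs_\ph\ph = \frac{1}{2}\om\w\om - \th\w\tld\rh,
\eew
where $\om$ is the Kähler form and $\rh + i\tld\rh$ the holomorphic volume form on $\bb{B}$. Since $A = \fr{p}_1(\mc{A})$ fixes $l$ and preserves $g$, it fixes $\th$ and preserves $\bb{B}$; comparing the components of $A^*\ph = \ph$ along $\th\w\ww{2}\bb{B}^*$ and $\ww{3}\bb{B}^*$ shows that $A$ fixes both $\om$ and $\rh$, and since $A\in\SU(3)$ preserves the holomorphic volume form it also fixes $\tld\rh$. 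In particular, $A$ acts as the identity on the $2$-dimensional space $\ls\ww{3,0}\bb{B}^*\rs = \<\rh,\tld\rh\?$.

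The first substantive step is to identify $\bb{A}'_l$ explicitly. Writing an arbitrary three-form on $\bb{R}^7$ as $\th\w\be + \al$ with $\be\in\ww{2}\bb{B}^*$ and $\al\in\ww{3}\bb{B}^*$, one has $l\hk(\th\w\be+\al) = \th(l)\,\be$, so the condition $l\hk\al = 0$ forces $\be = 0$ and hence $\bb{A}'_l\pc\ww{3}\bb{B}^*$. For $\al\in\ww{3}\bb{B}^*$, the type-27 conditions $\al\w\ph = 0$ and $\al\w\ps = 0$ of \eref{G2TDEx} decompose, using $\ww{7}\bb{B}^* = 0$ and separating $\th$- from non-$\th$-components, into the three equations
\ew
\al\w\om = 0, \hs{5mm} \al\w\rh = 0, \hs{5mm} \al\w\tld\rh = 0.
\eew
The first says $\al$ is primitive (\wrt\ $\om$); the last two remove its $\ls\ww{3,0}\bb{B}^*\rs$-component. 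Since the primitive three-forms decompose as $\ls\ww{3,0}\bb{B}^*\rs\ds\ls\ww[0]{2,1}\bb{B}^*\rs$, it follows that $\bb{A}'_l = \ls\ww[0]{2,1}\bb{B}^*\rs$, the space of real primitive $(2,1)+(1,2)$-forms, of real dimension $12$.

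With this identification, the trace is computed by inclusion-exclusion, exactly as in \lref{mu3-traces}. The Lefschetz and type decompositions give the $A$-invariant splitting
\ew
\ww{3}\bb{B}^* = \ls\ww{3,0}\bb{B}^*\rs \ds \lt(\om\w\bb{B}^*\rt) \ds \bb{A}'_l,
\eew
on which $A$ acts as the identity on the first summand (trace $2$) and, since $A$ fixes $\om$, acts on $\om\w\bb{B}^*\cong\bb{B}^*$ as it does on $\bb{B}^*$ (trace $\Tr_{\bb{R}^7}(A) - 1$, using $Al = l$). Therefore
\ew
\Tr_{\lt(\bb{A}'_l\rt)_\bb{C}}(A) = \Tr_{\ww{3}\bb{B}^*}(A) - \Tr_{\bb{R}^7}(A) - 1.
\eew
Finally, $\Tr_{\ww{3}\bb{B}^*}(A)$ is the third elementary symmetric function of the eigenvalues of $A|_\bb{B}$, so by Newton's identity (cf.\ \cite[Prop.\ 2.1]{RTAFC}) it equals $\frac{1}{6}\lt(p_1^3 - 3p_1 p_2 + 2p_3\rt)$ with power sums $p_j = \Tr_\bb{B}(A^j) = \Tr_{\bb{R}^7}(A^j) - 1$. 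Substituting and simplifying yields the claimed formula for $\Tr_{12}^{\SU(3)}(A)$.

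The main obstacle is the representation-theoretic identification in the second step: verifying that the type-27 conditions cut $\ww{3}\bb{B}^*$ down to precisely the primitive $(2,1)+(1,2)$-forms. Everything afterwards is the same symmetric-function bookkeeping as in the $\mu_3$ case; the one point requiring genuine care is that $A$ fixes $\tld\rh$ and not merely $\rh$, which is exactly where the refinement $\Stab_{\Gg_2}(l) \cong \SU(3)$ (rather than $\Un(3)$) enters.
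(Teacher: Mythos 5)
Your proposal is correct, and it reaches the same key identification as the paper --- that \(\bb{A}'_l\) is the space of primitive \((2,1)+(1,2)\)-forms on \(\bb{B} = \<l\?^\bot\), of real dimension 12 --- but by a genuinely different route in its two main steps. For the identification, the paper writes down an explicit four-piece \(\SU(3)\)-invariant decomposition of \(\ww[27]{3}(\bb{R}^7)^*\) (asserted with ``one can verify'') and notes that contraction with \(l\) kills all summands except \(\ls\ww[12]{2,1}\bb{B}^*\rs\); you instead derive the identification directly from the defining conditions of \eref{G2TDEx}, first showing \(l\hk\al = 0\) forces \(\al \in \ww{3}\bb{B}^*\), then reducing \(\al\w\ph = \al\w\ps = 0\) to the three equations \(\al\w\om = \al\w\rh = \al\w\tld{\rh} = 0\) (primitivity plus vanishing of the \((3,0)+(0,3)\)-part), which is more self-contained since it avoids the unverified explicit decomposition. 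For the trace, the paper computes \(\Tr_{\ww{3}(\bb{R}^7)^*}(A)\), descends to \(\ww[27]{3}(\bb{R}^7)^*\) via the \(\Gg_2\)-decomposition \(\ww{3}(\bb{R}^7)^* \cong \bb{R}\ds\bb{R}^7\ds\ww[27]{3}(\bb{R}^7)^*\), and then subtracts the traces on \(\bb{R}\), \(\bb{B}\) and \(\lt[\ww[8]{1,1}\bb{B}^*\rt]\), thereby reusing \(\Tr^{\SU(3)}_8\) from \lref{mu3-traces}; you work entirely on \(\bb{B}\), using the splitting \(\ww{3}\bb{B}^* = \ls\ww{3,0}\bb{B}^*\rs \ds (\om\w\bb{B}^*) \ds \bb{A}'_l\) and Newton's identity \(e_3 = \tfrac{1}{6}(p_1^3 - 3p_1p_2 + 2p_3)\) with power sums \(p_j = \Tr_{\bb{R}^7}(A^j) - 1\), with no dependence on the \(\mu_3\) lemma. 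Both arguments rest on the fact that \(A\) fixes \(\om\), \(\rh\) and \(\tld{\rh}\) (i.e.\ that \(\Stab_{\Gg_2}(l) \cong \SU(3)\), not merely \(\Un(3)\)), which you rightly flag as the point of genuine care; and your bookkeeping checks out: \(\tfrac{1}{6}(p_1^3 - 3p_1p_2 + 2p_3) - 2 - (\Tr_{\bb{R}^7}(A) - 1)\) simplifies exactly to \(\Tr^{\SU(3)}_{12}(A)\). What the paper's route buys is economy (it recycles the \(\mu_3\) computation and the \(\Gg_2\)-module structure); what yours buys is independence from \lref{mu3-traces} and from the asserted decomposition of \(\ww[27]{3}(\bb{R}^7)^*\).
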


\begin{proof}
As before, note that \(\Tr_{(\bb{A}'_l)_\bb{C}}(A) = \Tr_{\bb{A}'_l}(A)\).  Let \(\bb{B}\), \(\th\), \(\rh\), \(\om\) and \(J\) be as in the proof of \lref{mu3-traces}.  Then there is a decomposition:
\ew
\ww{3}\bb{B}^* = \underbrace{\bb{R}\cdot\lt\<\rh,J^*\rh\rt\?}_{\mns{\ls\ww{(3,0)}\bb{B}^*\rs}} \ds \underbrace{\ls\ww[6]{2,1}\bb{B}^*\rs \ds \ls\ww[12]{2,1}\bb{B}^*\rs}_{\mns{\ls\ww{(2,1)}\bb{B}^*\rs}}
\eew
into simple \(\SU(3)\)-modules, where \(\ls\ww[6]{2,1}\bb{B}^*\rs = \lt\{ \vth \w \om ~\m|~ \vth \in \bb{B}^*\rt\} \cong \bb{B}\) and \(\ls\ww[12]{2,1}\bb{B}^*\rs\) denotes the orthocomplement to \(\ls\ww[6]{2,1}\bb{B}^*\rs\) in \(\ls\ww{2,1}\bb{B}^*\rs\).
Define an isomorphism:
\ew
\tld{\ch_6}: \bb{B}^* &\to \ls\ww{2,0}\bb{B}^*\rs\\
u\hk\om &\mt u\hk J_\rh^*\rh.
\eew

Then one can verify that:
\ew
\ww[27]{3}\bb{A}^* &= \bb{R}\cdot\lt(4\th\w\om - 3\rh\rt) \ds \lt\{\th\w\tld{\ch}_6(\al) - \al\w\om~\middle|~\al\in\bb{B}^*\rt\} \ds \th \w \lt[\ww[8]{1,1}\bb{B}^*\rt] \ds \ls\ww[12]{2,1}\bb{B}^*\rs
\eew
and hence:
\ew
\bb{A}'_l = \lt\{ \al \in \ww[14]{2}\lt(\bb{R}^7\rt) ~\m|~ l \hk \al = 0\rt\} = \ls\ww[12]{2,1}\bb{B}^*\rs.
\eew

One can compute directly that:
\ew
\Tr_{\ww{3}\lt(\bb{R}^7\rt)^*}(A) = \frac{\Tr_{\bb{R}^7}(A)^3 + 2\Tr_{\bb{R}^7}(A^3) - 3\Tr_{\bb{R}^7}(A^2)\Tr_{\bb{R}^7}(A)}{6}.
\eew
Hence, using the \g-invariant decomposition \(\ww{3}\lt(\bb{R}^7\rt)^* \cong \bb{R} \ds \bb{R}^7 \ds \ww[27]{3}\lt(\bb{R}^7\rt)^*\), one finds that:
\ew
\Tr_{\ww[27]{3}\lt(\bb{R}^7\rt)^*}(A) = \frac{\Tr_{\bb{R}^7}(A)^3 + 2\Tr_{\bb{R}^7}(A^3) - 3\Tr_{\bb{R}^7}(A^2)\Tr_{\bb{R}^7}(A)}{6} - \Tr_{\bb{R}^7}(A) - 1.
\eew
Now, since there is an \(\SU(3)\)-invariant decomposition \(\ww[27]{3}\lt(\bb{R}^7\rt)^* \cong \bb{R} \ds \bb{B} \ds \lt[\ww[8]{1,1}\bb{B}^*\rt] \ds \ls\ww[12]{2,1}\bb{B}^*\rs\), it follows that:
\ew
\Tr_{\ls\ww[12]{2,1}\bb{B}^*\rs}(A) &= \Tr_{\ww[27]{3}\lt(\bb{R}^7\rt)^*}(A) - 1 - \Tr_\bb{B}(A) - \Tr_{\lt[\ww[8]{1,1}\bb{B}^*\rt]}(A)\\
&= \Tr_{\ww[27]{3}\lt(\bb{R}^7\rt)^*}(A) - \Tr_{\bb{R}^7}(A) - \Tr^{\SU(3)}_8(A).
\eew
The result follows.

\end{proof}

Arguing as for \(\mu_3\), one obtains:
\begin{Thm}\label{mu4-J-thm}
Let \(\M_\Ga = \lqt{\bb{T}^7}{\Ga}\) be a Joyce orbifold.  Then the \(\mu_4\)-invariant \(\mu_4:\cal{G}^{TF}_2(\M_\Ga) \to \bb{R}\) is constant, taking the value:
\ew
\mu_4(\M_\Ga) = \frac{-1}{|\Ga|} \sum_{\mc{A} = (A,t) \in \Ga} \Tr^{\SU(3)}_{12}(A).
\eew\\
\end{Thm}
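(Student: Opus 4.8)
The plan is to reproduce, \emph{mutatis mutandis}, the argument establishing \tref{mu3-J-thm}, the single ingredient carrying new content being the trace computation just performed. Recall from the preceding section that \(\mu_4(\ps)\) is the value at \(0\) of the meromorphic continuation of
\ew
\mu_{\mc{F}'(\ps)}(s) = \sum_{\la \in \mc{L}\osr\{0\}} \frac{\dim \bb{H}'(\la)^\Ga}{|\la|^s},
\eew
valid for \(\fr{Re}(s) > \frac{7}{2}\), where the spaces \(\bb{H}'_l\), \(\bb{H}'(\la)\) and \(\bb{A}'_l\) are as defined there. Thus, exactly as for \(\mu_3\), the task reduces to computing \(\dim \bb{H}'(\la)^\Ga\) through the character of the \(\Ga\)-representation on \(\bb{H}'(\la)\) together with the averaging formula \eref{inv-dim}.

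First I would establish the analogue of \pref{mu3-char}. The proof of \lref{step-1-decomp} uses only the permutation action \eref{3act} of \(\mc{A} = (A,t)\) on the summands \(\bb{H}_l \mapsto \bb{H}_{A^Tl}\); the identical permutation holds on the spaces \(\bb{H}'_l\), since the twisting by \(\ch_l\) and the permutation of summands depend only on the character \(\ch_l\) and not on whether the form factor \(\al\) lies in \(\ww[14]{2}(\bb{R}^7)^*_\bb{C}\) or \(\ww[27]{3}(\bb{R}^7)^*_\bb{C}\). Hence the same block-diagonal computation gives, for the character \(\ch'_\la\) of the \(\Ga\)-representation on \(\bb{H}'(\la)\),
\ew
\ch'_\la(\mc{A}) = \sum_{l \in \mc{G}(\la,\mc{A})} e^{2\pi i g(l,t)} \Tr_{(\bb{A}'_l)_\bb{C}}(A).
\eew
By the lemma just proven, \(\Tr_{(\bb{A}'_l)_\bb{C}}(A) = \Tr^{\SU(3)}_{12}(A)\), so \eref{inv-dim} yields
\ew
\dim \bb{H}'(\la)^\Ga = \frac{1}{|\Ga|} \sum_{\mc{A} = (A,t) \in \Ga} \sum_{l \in \mc{G}(\la,\mc{A})} e^{2\pi i g(l,t)} \Tr^{\SU(3)}_{12}(A).
\eew

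Substituting this into \(\mu_{\mc{F}'(\ps)}(s)\) and rearranging precisely as in the passage from \eref{initial-sum-3} to the display following it — interchanging the summations and grouping over \(\mc{A}\) — recasts the inner \(l\)-sum as the twisted Epstein \(\ze\)-function \(\sum_{l \in \mc{G}(\mc{A})\osr\{0\}} e^{2\pi i g(l,t)}\|l\|_g^{-2s}\) over the lattice \(\mc{G}(\mc{A})\), which is non-zero by the same \(1\)-eigenspace argument as before. Its meromorphic continuation takes the value \(-1\) at \(s = 0\), independently of \(t\) and of the rank of \(\mc{G}(\mc{A})\) \cite[p.\ 627]{ZTAZI}; evaluating at \(s = 0\) then gives \(\mu_4(\M_\Ga,\ps) = \frac{-1}{|\Ga|} \sum_{\mc{A} = (A,t) \in \Ga} \Tr^{\SU(3)}_{12}(A)\), which is manifestly independent of \(\ps\) and hence constant on \(\cal{G}^{TF}_2(\M_\Ga)\).

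I do not anticipate a genuine obstacle, since the only step with real content — identifying the trace as \(\Tr^{\SU(3)}_{12}(A)\) — has already been discharged, and every remaining ingredient (the block-diagonal character decomposition, the restriction to lattice vectors fixed by \(A\), the non-vanishing of \(\mc{G}(\mc{A})\), and the Epstein \(\ze\)-regularisation at \(s=0\)) depends only on the \(\Ga\)-action on \(\bb{T}^7\) via \eref{3act}, not on the specific \g-module containing \(\al\). The one point I would verify explicitly is that \(-\dd^*\dd\) acts on \(\bb{H}'(\la)\) by \(\la\,\Id\), which rests on \(l \hk \al = 0\) for \(\al \in \ww[27]{3}(\bb{R}^7)^*_\bb{C}\); this is already recorded in the preceding section, so the remainder is a verbatim transcription of the \(\mu_3\) computation.
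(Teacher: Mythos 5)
Your proposal is correct and follows essentially the same route as the paper: after the trace lemma giving \(\Tr_{(\bb{A}'_l)_\bb{C}}(A) = \Tr^{\SU(3)}_{12}(A)\), the paper simply says ``Arguing as for \(\mu_3\), one obtains'' the result, and your write-up supplies exactly those steps (the block-diagonal character decomposition over the \(\bb{H}'_l\), the averaging formula, and the Epstein \(\ze\)-regularisation at \(s=0\)) unchanged. Your closing check that \(-\dd^*\dd\) acts on \(\bb{H}'(\la)\) by \(\la\,\Id\) is indeed the only \(27\)-type-specific analytic fact needed, and it is already recorded in the paper's preceding section.
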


\section{Numerical values of \(\mu_3\) and \(\mu_4\) on explicit examples of Joyce orbifolds}

Using the closed formulae for \(\mu_3\) and \(\mu_4\) given in \trefs{mu3-J-thm} and \ref{mu4-J-thm}, many explicit examples of \(\mu_3\) and \(\mu_4\) can be computed.  I give a few examples below:

\begin{Ex}[Flat Tori] Firstly, consider the case \(\Ga = {\bf 1}\). Then:
\ew
\mu_3(\bb{T}^7) = -\Tr^{\SU(3)}_8(\Id) = -8
\eew
and:
\ew
\mu_4(\bb{T}^7) = -\Tr^{\SU(3)}_{12}(\Id) = -12.
\eew
(Note that \(\Tr^{\SU(3)}_8(\Id) = \dim \lt[\ww[8]{1,1}\bb{B}^*\rt]\) and \(\Tr^{\SU(3)}_{12}(\Id) = \dim \ls\ww[12]{2,1}\bb{B}^*\rs\), as expected.)
\end{Ex}

For the first non-trivial case, let me consider a family of examples in \cite[\S3.1]{CR7MwHG2II}. Consider \(\tld{\Ga} = \<\al,\be,\ga\? \pc \lt(\Gg_2 \cap \SL(7;\bb{Z})\rt) \sdp \bb{T}^7\), where:
\caw
\al: \lt(x^1, x^2, x^3, x^4, x^5, x^6, x^7\rt) \mt \lt(x^1, x^2, x^3, -x^4, -x^5, -x^6, -x^7\rt)\\
\be: \lt(x^1, x^2, x^3, x^4, x^5, x^6, x^7\rt) \mt \lt(x^1, -x^2, -x^3, x^4, x^5, b^6 - x^6, b^7 - x^7\rt)\\
\ga: \lt(x^1, x^2, x^3, x^4, x^5, x^6, x^7\rt) \mt \lt(-x^1, x^2, c^3 - x^3, x^4, c^5 - x^5, x^6, c^7 - x^7\rt)
\caaw
for suitable \(b^6, b^7, c^3, c^5, c^7 \in \lt\{0,\frac{1}{2}\rt\}\).  (Note that these formulae differ from those in \cite{CR7MwHG2II} since a different `standard \g\ 3-form' is used {\it op.\ cit.}.  Joyce's original notation is obtained upon applying the transformation \(x^i \mt -x^{8-i}\).)   Then it is shown in \cite{CR7MwHG2II} that \(\tld{\Ga}\cong \lt(\rqt{\bb{Z}}{2}\rt)^3\), generated by \(\al\), \(\be\) and \(\ga\). One can compute that for all \(\mc{A} = (A,t)\in\tld{\Ga}\osr\{\Id\}\), \(A\) is diagonal, with diagonal entries (in some order):
\ew
1,1,1,-1,-1,-1,-1.
\eew
Thus, one can verify that for all \(\mc{A} = (A,t) \in \tld{\Ga}\osr\{\Id\}\):
\ew
\Tr^{\SU(3)}_8(A) = 0 \et \Tr^{\SU(3)}_{12}(A) = 4.
\eew
Using this, one can compute further examples:
\begin{Ex}[K3 Orbifold] Take \(\Ga_1 = \<\al\? \pc \tld{\Ga}\). Then \(\M_1 = \M_{\Ga_1} \cong \lqt{\bb{T}^4}{\lt(\rqt{\bb{Z}}{2}\rt)} \x \bb{T}^3\), where \(\lqt{\bb{T}^4}{\lt(\rqt{\bb{Z}}{2}\rt)}\) is the standard orbifold used in the Kummer construction of the K3 surface. Using \trefs{mu3-J-thm} and \ref{mu4-J-thm}, one can compute that:
\ew
\mu_3(\M_1) = \frac{-1}{2}(8 + 0) = -4
\eew
and:
\ew
\mu_4(\M_1) = \frac{-1}{2}(12 + 4) = -8.
\eew
\end{Ex}

\begin{Ex}[Calabi--Yau Orbifold] Set \(\lt(b^6,b^7\rt) = \lt(0,\frac{1}{2}\rt)\) and take \(\Ga_2 = \<\al,\be\?\pc\tld{\Ga}\). Then \(\M_2 = \M_{\Ga_2} \cong \lqt{\bb{T}^6}{\lt(\rqt{\bb{Z}}{2}\rt)^2} \x S^1\), where the \(\SU(3)\)-orbifold \(\lqt{\bb{T}^6}{\lt(\rqt{\bb{Z}}{2}\rt)^2}\) admits a smooth Calabi--Yau 3-fold as a crepant resolution. Then:
\ew
\mu_3(\M_2) = \frac{-1}{4}(8 + 3\x0) = -2
\eew
and:
\ew
\mu_4(\M_2) = \frac{-1}{4}(12 + 3\x 4) = -6.
\eew
\end{Ex}

\begin{Ex}[\g\ Orbifold] Now consider the full group \(\Ga_3 = \tld{\Ga}\). For suitable choices of \(b^i\) and \(c^j\), the orbifold \(\M_3 = \M_{\Ga_3}\) may be resolved to form a smooth \g-manifold (see \cite{CR7MwHG2II}). Then:
\ew
\mu_3(\M_3) = \frac{-1}{8}(8 + 7\x0) = -1
\eew
and:
\ew
\mu_4(\M_3) = \frac{-1}{8}(12 + 7\x 4) = -5.
\eew
\end{Ex}
Using similar methods, many further explicit examples can now easily be computed.

\begin{Rk}
In \cite{AAIoG2M}, \CGN\ define a different spectral invariant of torsion-free \g-structures on manifolds, denoted \(\ol{\nu}\).  By \cite[Thm.\ 7.7]{OeI}, \(\ol{\nu}\) is equally well-defined on closed \g-orbifolds.  Moreover, for any closed \g-orbifold \((\M,\ph)\) which admits an orientation-reversing isometry, \(\ol{\nu}(\ph) = 0\) (cf.\ \cite[Prop.\ 1.5(iii)]{AAIoG2M}).

Now consider the torsion-free \g-structure \(\ph_0\) on the orbifolds \(\bb{T}^7\), \(\M_1\) and \(\M_2\) above.  Each of \((\bb{T}^7,\ph_0)\), \((\M_1,\ph_0)\) and \((\M_2,\ph_0)\) admit orientation-reversing isometries, since each orbifold is the Riemannian product of \(S^1\) with a 6-orbifold.  Thus:
\ew
\ol{\nu}(\bb{T}^7,\ph_0) = \ol{\nu}(\M_1,\ph_0) = \ol{\nu}(\M_2,\ph_0) = 0;
\eew
in particular, the \(\ol{\nu}\)-invariant alone cannot distinguish between these three \g-orbifolds.  By contrast:
\caw
\mu_3(\bb{T}^7) = -8, \hs{5mm} \mu_3(\M_1) = -4 \et \mu_3(\M_2) = -2\\
\mu_4(\bb{T}^7) = -12, \hs{5mm} \mu_4(\M_1) = -8 \et \mu_4(\M_2) = -6
\caaw
and thus either \(\mu_3\) or \(\mu_4\) alone is sufficient to distinguish between the orbifolds \(\bb{T}^7\), \(\M_1\) and \(\M_2\).  This suggests that \(\mu_3\) and \(\mu_4\) may well be better tools for distinguishing between Joyce manifolds than the currently existing \(\ol{\nu}\)-invariant.\\
\end{Rk}

\appendix

\section{Formulae for refined exterior derivatives induced by torsion-free \g\ 3-forms}\label{G2-Kahler-Id}

Recall that on an oriented 7-orbifold \(\M\) with torsion-free \g\ 3-form \(\ph\), the usual exterior derivative can be decomposed according to type, yielding the `refined' exterior differential operators:
\ew
\dd^1_7:\Om^0(\M)&\to\Om^1(\M) & \dd^7_7:\Om^1(\M)&\to\Om^1(\M) & \dd^7_{14}:\Om^1(\M)&\to\Om^2_{14}(\M)\\
f &\mt \dd f                                         & \al &\mt \Hs_\ph\dd(\al\w\ps)               & \al &\mt \pi_{14}(\dd\al)\\
\\
\dd^7_{27}:\Om^1(\M)&\to\Om^3_{27}(\M) & \dd^{14}_{27}:\Om^2_{14}(\M)&\to\Om^3_{27}(\M) & \dd^{27}_{27}:\Om^3_{27}(\M)&\to\Om^3_{27}(\M)\\
\al &\mt \pi_{27}\dd\Hs_\ph(\al\w\ps)                & \be &\mt \pi_{27}(\dd\be)                                             & \ga &\mt \Hs_\ph\pi_{27}(\dd\be).\\
\eew
Analogously, define \(\dd^7_1=(\dd^1_7)^*\), \(\dd^{14}_7 = (\dd^7_{14})^*\), \(\dd^{27}_7=(\dd^7_{27})^*\) and \(\dd^{27}_{14}=(\dd^{14}_{27})^*\), where \({}^*\) denotes the formal \(L^2\) adjoint (\(\dd^7_7\) and \(\dd^{27}_{27}\) are both formally \(L^2\) self-adjoint).  Then, the main result of \cite[\S5]{RoG2S} can readily be generalised to orbifolds, giving:
\begin{Thm*}
All exterior and co-exterior derivatives on \((\M,\ph)\) can be expressed purely in terms of the operators \(d^1_7\), \(\dd^7_1\), \(\dd^7_7\), \(\dd^7_{14}\), \(\dd^{14}_7\), \(\dd^7_{27}\), \(\dd^{27}_7\), \(\dd^{14}_{27}\), \(\dd^{27}_{14}\) and \(\dd^{27}_{27}\).  Explicitly:

\begin{itemize}
\item For \(f\in\Om^0(\M)\):
\e\label{1KI}
\dd f = \dd^1_7 f, \hs{5mm} \dd(f\ph) = \dd^1_7 f\w \ph \et \dd(f\ps) = \dd^1_7 f\w\ps;
\ee
\item For \(\al\in\Om^1(\M)\):\footnote{N.B.\ the formula for \(\dd(\al \w \ps)\) is incorrectly stated in \cite[\S5]{RoG2S} as \(-\Hs_\ph \dd^7_7\).  The error was pointed out by Bryant--Xu in \cite[\S1.3]{LFfCG2S:STB}.}
\ew
\begin{gathered}
\dd\al = \frac{1}{3}\Hs_\ph\lt(\dd^7_7\al\w\Hs_\ph\ph\rt) + \dd^7_{14}\al, \hs{5mm} \dd(\al\w\ph) = \frac{2}{3}\dd^7_7\al\w\ps - \Hs_\ph\dd^7_{14}\al,\\
\dd\Hs_\ph(\al\w\ph) = \frac{4}{7}(\dd^7_1\al)\ps + \frac{1}{2}\dd^7_7\al\w\ph + \Hs_\ph\dd^7_{27}\al,\\
\dd\lt(\Hs_\ph(\al\w\Hs_\ph\ph)\rt) = -\frac{3}{7}\lt(\dd^7_1\al\rt)\ph - \frac{1}{2}\Hs_\ph\lt(\dd^7_7\al\w\ph\rt) + \dd^7_{27}\al,\\
\dd(\al \w \ps) = \Hs_\ph \dd^7_7 \al \et \dd(\Hs_\ph\al) = -\lt(\dd^7_1\al\rt) vol_\ph.
\end{gathered}
\eew
\item For \(\be\in\Om^2_{14}(\M)\):
\e\label{14KI}
\dd\be = \frac{1}{4}\Hs_\ph(\dd^{14}_7\be\w\ph) + \dd^{14}_{27}\be \et \dd^*\be = \dd^{14}_7\be;
\ee
\item For \(\ga\in\Om^3_{27}(\M)\):
\e\label{27KI}
\begin{gathered}
\dd\ga = \frac{1}{4}\dd^{27}_7\ga \w\ph + \Hs_\ph\dd^{27}_{27}\ga \et \dd^*\ga = \frac{1}{3}\Hs_\ph(\dd^{27}_7\ga\w\ps) + \Hs_\ph\dd^{27}_{14}\ga.
\end{gathered}
\ee
\end{itemize}
The condition \(\dd^2 = 0\) corresponds to the 14 identities:
\e\label{d2=0}
\begin{gathered}
\dd^7_7\dd^1_7 = 0, \hs{5mm} \dd^7_{14}\dd^1_7 = 0, \hs{5mm} \dd^7_1\dd^7_7 = 0, \hs{5mm} \dd^{14}_7 \dd^7_{14} = \frac{2}{3}\lt(\dd^7_7\rt)^2, \hs{5mm} \dd^{27}_7\dd^7_{27} = \lt(\dd^7_7\rt)^2 + \frac{12}{7}\dd^1_7\dd^7_1,\\
\\
\dd^7_{14}\dd^7_7 + 2\dd^{27}_{14}\dd^7_{27} = 0, \hs{5mm} 3\dd^{14}_{27}\dd^7_{14} + \dd^7_{27}\dd^7_7 = 0, \hs{5mm} 2\dd^{27}_{27}\dd^7_{27} - \dd^7_{27}\dd^7_7 = 0, \hs{5mm} \dd^7_1 \dd^{14}_7 = 0,\\
\\
\dd^7_7\dd^{14}_7 + 2\dd^{27}_7\dd^{14}_{27} = 0, \hs{5mm} \dd^7_{27}\dd^{14}_7 + 4\dd^{27}_{27}\dd^{14}_{27} = 0, \hs{5mm} 3\dd^{14}_7\dd^{27}_{14} + \dd^7_7\dd^{27}_7 = 0,\\
\\
2\dd^{27}_7\dd^{27}_{27} - \dd^7_7\dd^{27}_7 = 0 \et \dd^7_{14}\dd^{27}_7 + 4\dd^{27}_{14}\dd^{27}_{27} = 0.
\end{gathered}
\ee

Finally, all Hodge Laplacians can be expressed in terms of the same operators.  Explicitly:
\begin{itemize}
\item For \(f\in\Om^0(\M)\):
\ew
\De f = \dd^7_1\dd^1_7 f.
\eew

\item For \(\al\in\Om^1(\M)\):
\ew
\De\al = (\dd^7_7)^2\al + \dd^1_7\dd^7_1\al.
\eew

\item For \(\be\in\Om^2_{14}(\M)\):
\ew
\De\be = \frac{5}{4}\dd^7_{14}\dd^{14}_7\be + \dd^{27}_{14}\dd^{14}_{27}\be.
\eew

\item For \(\ga\in\Om^3_{27}(\M)\):
\e\label{27-De}
\De\ga = \frac{7}{12}\dd^7_{27}\dd^{27}_7\ga + \dd^{14}_{27}\dd^{27}_{14}\ga + (\dd^{27}_{27})^2\ga.
\ee
\end{itemize}
Formulae for the Hodge Laplacian acting on sections of the remaining bundles \(\ww[q]{p}\T^*\M\) are obtained by identifying \(\ww[q]{p}\T^*\M\) with either \(\ww{0}\T^*\M\), \(\ww{1}\T^*\M\), \(\ww[14]{2}\T^*\M\) or \(\ww[27]{3}\T^*\M\) as appropriate, and noting that, since \(\ph\) is torsion-free, \(\De\) commutes with the identification (so that, e.g.\ \(\De(f\ph) = (\De f)\ph\)).\\
\end{Thm*}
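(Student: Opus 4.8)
The plan is to reduce the identities to a local, representation-theoretic computation on the flat model and then descend to orbifolds by naturality. The essential input is that \(\ph\) is torsion-free, so the \LC\ connection \(\nabla\) of \(g_\ph\) satisfies \(\nabla\ph = 0\) (equivalently \(\nabla\ps = 0\)); hence the type-decomposition \eref{G2TD} is \(\nabla\)-parallel, each projection \(\pi_q\) commutes with \(\nabla\), and \(\dd\), being the alternation of \(\nabla\), carries \(\Om^p_a(\M)\) into a sum of spaces \(\Om^{p+1}_b(\M)\).

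First I would determine which components \(\dd^a_b\) can be nonzero. The symbol of \(\pi_b\circ\dd\) at a covector \(\xi\) is \(\pi_b(\xi\w-)\), so by multiplicity-one (Clebsch--Gordan) this component vanishes unless the simple \(\Gg_2\)-module \(\ww[b]{p+1}(\bb{R}^7)^*\) occurs in \((\bb{R}^7)^*\ts\ww[a]{p}(\bb{R}^7)^*\); together with the isomorphisms \(\Hs_\ph:\ww[q]{p}(\bb{R}^7)^*\cong\ww[q]{7-p}(\bb{R}^7)^*\), this leaves exactly the ten operators listed, the degree-shifting \(\Hs_\ph\)'s in \(\dd^7_7\), \(\dd^{27}_{27}\), \(\dd^7_{27}\) being conventions that keep each operator acting between the four model spaces \(\Om^0\), \(\Om^1\), \(\Om^2_{14}\), \(\Om^3_{27}\). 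Since \(\dd\) is built from the parallel connection \(\nabla\), each such component is a universal \(\Gg_2\)-natural first-order operator, so Schur's lemma applied to its symbol determines it up to a single real constant independent of the particular torsion-free manifold.

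Next I would fix these constants on the flat model \((\bb{R}^7,\ph_0)\), where \(\nabla\) is the ordinary derivative and every formula becomes a pointwise identity in the constant tensors \(\ph_0\) and \(\ps_0=\Hs_{\ph_0}\ph_0\). Evaluating each formula on explicit test forms --- for instance \(f=x^1\) for \eref{1KI} and \(\al=x^j\,\dd x^i\) for the \(\Om^1\)-identities --- and simplifying with the standard \g\ contraction identities yields all the numerical factors. With the component expressions in hand, the fourteen relations \eref{d2=0} follow formally by expanding \(\dd^2=0\) and collecting terms of each type, while the Laplacian formulae (including \eref{27-De}) follow by writing \(\De=\dd\dd^*+\dd^*\dd\), substituting the component expressions for \(\dd\) and \(\dd^*=\pm\Hs_\ph\dd\Hs_\ph\), and reducing via \eref{d2=0}; no further geometric input is required.

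Finally, the passage to orbifolds is by naturality: in any chart \(U\cong\lqt{\bb{R}^7}{\Ga}\) the structure lifts to a \(\Ga\)-invariant torsion-free \g\ 3-form \(\tld\ph\) on \(\tld U\pc\bb{R}^7\), to which the manifold result applies; and since \(\dd\), \(\Hs_{\tld\ph}\) and every \(\pi_q\) are \(\Ga\)-equivariant (as \(\Ga\pc\Gg_2\) preserves \eref{G2TD}), all ten operators descend to \(U\), so the identities hold on \(\M\). I expect the only real obstacle to be computational: getting the complete list of numerical coefficients and the \(\Hs_\ph\)-sign conventions right, since these propagate through every formula --- indeed it is precisely such a sign in \(\dd(\al\w\ps)\) that was misstated in \cite{RoG2S} and corrected in \cite{LFfCG2S:STB}, as recorded in the footnote to \eref{1KI}.
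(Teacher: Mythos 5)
Your proposal is correct, but it takes a genuinely different route from the paper --- principally because the paper offers no derivation of its own: both \tref{G2KI} and \aref{G2-Kahler-Id} justify the result by citing the manifold case from \cite[\S5]{RoG2S} (with the sign of \(\dd(\al\w\ps)\) corrected following \cite[\S1.3]{LFfCG2S:STB}) and asserting that the orbifold generalisation is routine, which is exactly the content of your final paragraph. What you do differently is reconstruct the manifold-level statement itself: torsion-freeness makes the type decomposition parallel, so each component \(\pi_b\circ\dd\) is a natural first-order operator determined by its symbol, an element of \(\Hom_{\Gg_2}\lt((\bb{R}^7)^*\ts\ww[a]{p}\lt(\bb{R}^7\rt)^*,\ww[b]{p+1}\lt(\bb{R}^7\rt)^*\rt)\); the relevant tensor products are indeed multiplicity-free (\(\bb{R}^7\ts\bb{R}^7\cong\bb{R}\ds\bb{R}^7\ds\fr{g}_2\ds\ss[0]{2}\lt(\bb{R}^7\rt)^*\), while \(\bb{R}^7\ts\fr{g}_2\) and \(\bb{R}^7\ts\ss[0]{2}\lt(\bb{R}^7\rt)^*\) contain each simple summand at most once), so Schur's lemma does reduce every identity to one universal constant, computable on the flat model \((\bb{R}^7,\vph_0)\). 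This is in substance the representation-theoretic method underlying the cited source, so the two routes are mathematically compatible; yours buys self-containedness --- an explicit reason why flat-model evaluation suffices on an arbitrary torsion-free \((\M,\ph)\), plus an explicit equivariant-descent argument for orbifolds that the paper leaves implicit --- while the paper's buys brevity and inherits the literature's already-corrected constants. The one substantive caveat is that the theorem's real content is the table of numerical coefficients and the fourteen relations \eref{d2=0}, and your write-up defers all of these to test-form evaluations that are described but not performed; since such coefficients are precisely where the literature itself erred (the very \(\dd(\al\w\ps)\) sign you cite), the proposal is a correct and complete strategy, but it becomes a proof only once those evaluations are actually carried out.
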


~\vs{5mm}

\noindent Laurence H.\ Mayther\\
University of Cambridge\\
United Kingdom\\
{\it lhm32@cam.ac.uk}

\end{document}